\newtheorem{Theorem}{Theorem}[section]
\newtheorem{Lemma}[Theorem]{Lemma}
\newtheorem{Proposition}[Theorem]{Proposition}
\newtheorem{Corollary}[Theorem]{Corollary}
\theoremstyle{definition}
\theoremstyle{remark}
\newtheorem{Remark}[Theorem]{Remark}
\newtheorem{Example}{Example}
\numberwithin{equation}{section}
\newcommand{\R}{\mathbb{R}}
\newcommand{\Z}{\mathbb{Z}}
\newcommand{\D}{\mathbb{D}}
\newcommand{\myC}{\mathbb{C}}
\newcommand{\mysl}{\mathfrak{sl}_3 \R}
\newcommand{\SL}{\mathrm{SL}_3 \R}
\newcommand{\slc}{\mathfrak{sl}_3 \myC}
\newcommand{\SLC}{\mathrm{SL}_3 \myC}
\newcommand{\LSL}{\Lambda \SL}
\newcommand{\LSLC}{\Lambda \mathrm{SL}_3 \myC}
\newcommand{\LSLP}{\Lambda^+ \SL}
\newcommand{\LSLN}{\Lambda^- \SL}
\newcommand{\LSLPN}{\Lambda^+_{*} \SL}
\newcommand{\LSLNN}{\Lambda^-_{*} \SL}
\newcommand{\Lsl}{\Lambda \mysl}
\newcommand{\LslP}{\Lambda^+ \mysl}
\newcommand{\LslN}{\Lambda^- \mysl}
\newcommand{\LslPN}{\Lambda^+_{*} \mysl}
\newcommand{\LslNN}{\Lambda^-_{*} \mysl}
\newcommand{\id}{\operatorname{id}}
\newcommand{\diag}{\operatorname{diag}}
\newcommand{\sign}{\operatorname{sign}}
\newcommand{\tr}{\operatorname{tr}}
\newcommand{\pd}[1]{\partial_{#1}}
\begin{document}
\title{Representation formula for discrete indefinite affine spheres}

\author[S.-P.~Kobayashi]{Shimpei Kobayashi}
\address{Department of Mathematics,
Hokkaido University, Sapporo 060-0810, Japan}
\email{shimpei@math.sci.hokudai.ac.jp}

\author[N.~Matsuura]{Nozomu Matsuura}
\address{Department of Education and Creation Engineering,
Kurume Institute of Technology,
Kurume 830-0052, Japan}
\email{nozomu@kurume-it.ac.jp}

\thanks{This work was partially supported by JSPS KAKENHI
Grant Numbers JP26400059, JP15K04862, JP18K03265 and JP19K03507.
The first named author was also
supported by
Deutsche Forschungsgemeinschaft-Collaborative Research Center, TRR 109,
``Discretization in Geometry and Dynamics''.
The second named author was also
supported by the Fukuoka University fund,
grant number 177102.}

\subjclass[2010]{Primary 53A15, 37K10}
\keywords{affine sphere,
Tzitzeica equation,
Liouville equation,
discrete differential geometry,
discrete integrable systems,
loop group}
\date{\today}
\pagestyle{plain}

\begin{abstract}
We present a representation formula for discrete indefinite affine spheres
via loop group factorizations.
This formula is derived from the Birkhoff decomposition of loop groups
associated with discrete indefinite affine spheres.
In particular we show that a discrete indefinite improper affine sphere
can be constructed from two discrete plane curves.
\end{abstract}

\maketitle

\section*{Introduction}

\nocite{tz1908-1}
\nocite{tz1908-2}
\nocite{tz1909}
\nocite{tz1910-1}

Around 1908 Tzitzeica introduced surfaces in \cite{tz1907}--\cite{tz1910-2},
which are now called proper affine spheres with center at the origin,
with the property that
the Gaussian curvature is proportional to the fourth power
of the support function from the origin.
He observed that this property is invariant under an affine transformation
fixing the origin.
This work is regarded as the source of affine differential geometry
of surfaces,
and gives his name to the structure equation of proper affine spheres.
The reader is referred to 
\cite{MR1729560} for an account of the Tzitzeica equation
within its classical context 
of surface theory in equicentroaffine geometry.
The Tzitzeica equation is now known to be one of the most famous
soliton equations in the theory of integrable systems
(\cite{MR0442516}, \cite{MIKHAILOV198173}, \cite{MR550472},
\cite{MR1774990}).
In fact it is obtained by a so-called $B$-type reduction of
the $2$-dimensional Toda lattice equation (\cite{MR1490247}).
The proper affine sphere can be understood as
an affine geometric analogue of the sphere,
in a sense that its affine normals meet at the origin.
When the affine normals are parallel,
the surface can be regarded as an analogue of the plane,
and is called an improper affine sphere.
The improper affine sphere is described by the Liouville equation,
which is also known to be integrable.

It is a distinctive feature of integrable systems
that we can discretize them while keeping their integrability.
For example,
a discrete Liouville equation was derived in \cite{hirota1979JPSJ46}
using the bilinear techniques,
and the corresponding discrete improper affine sphere
was introduced in \cite{MR1949349}.
As for the Tzitzeica equation,
an integrable discrete model was proposed in \cite{MR1676596},
which can be written into the trilinear equation
in terms of the $\tau$ function.
Their approaches in finding the discrete equations belong to
the theory of discrete differential geometry (DDG),
which investigates the geometric objects that are described by
integrable partial difference equations,
refer to \cite{MR2467378} for a comprehensive introduction to DDG.
We expect that investigating discrete objects may offer
a better understanding way of smooth objects,
and as a consequence of it,
DDG can be applied to practical use
in architecture,
computer vision,
operations research and so on.
See, for instance, \cite{MR2335142},
\cite{MR2486030} and \cite{MR3247006}.

The interrelations existing between integrable systems
and geometry are described by the Gauss-Weingarten formula,
because the moving frames of surfaces give the Lax pairs
of soliton equations.
Besides that,
it is the point that we are able to introduce a natural parameter into
the Lax pair, which is often called the spectral parameter.
Thus we can investigate surfaces from a view point of the loop group theory,
which also helps us in deriving discrete counterparts of surfaces.
Indeed,
on the indefinite affine spheres,
the loop group discretization method
has been demonstrated in \cite{MR1676596}.
Further,
we can make a use of the Birkhoff decomposition of loop groups
so as to give construction methods for special classes of discrete surfaces.
For example,
discrete counterparts for the surfaces with constant negative curvature
can be defined and constructed
via loop group method (\cite{MR97a:39022}, \cite{MR3661533}),
where a discrete analogue of separation of variables for
sine-Gordon equation (\cite{MR581396}) is presented.

In this paper,
we give a construction method for discrete indefinite affine spheres
by using a loop group method.
In particular we show that a discrete indefinite improper affine sphere
can be constructed from two discrete plane curves.
The paper is organized as follows:
in Section \ref{sc:smooth},
after explaining some basic notions of affine differential geometry,
we prepare loop groups associated with affine spheres.
We close the section by rephrasing the representation formula
by Blaschke for improper affine spheres
and illustrating some examples that may have singularities.
In Section \ref{sc:discrete},
we discretize the representation formula,
so that the discrete improper affine spheres,
which may have singularities,
are constructed from two planar discrete curves.

\section{Indefinite affine spheres}\label{sc:smooth}

\subsection{Preliminaries}\label{subsec:prelim}

Let $f$ be an immersion from a domain $\D \subset \R^2$
to the affine space $\left(\R^3, \det\right)$.
Here we use determinant function as a fixed volume element on $\R^3$.
Let $\xi$ be an transversal vector field to $f$,
that is,
for each $\left(x, y\right) \in \D$
the vector $\xi \left(x, y\right)$ never tangent
to the surface $f \left(\D\right)$.
A symmetric bilinear function $h = \left[h_{i j}\right]$ is defined
by the Gauss formula
\begin{align*}
\pd{x}^2 f &= w_{11}^1 \pd{x} f + w_{11}^2 \pd{y} f + h_{11} \xi,\\
\pd{y} \pd{x} f
&= w_{12}^1 \pd{x} f + w_{12}^2 \pd{y} f + h_{12} \xi,\\
\pd{y}^2 f &= w_{22}^1 \pd{x} f + w_{22}^2 \pd{y} f + h_{22} \xi,
\end{align*}
where $\pd{x} = \partial/\partial x$ and $\pd{y} = \partial/\partial y$.
It is easy to check that the rank of $h$ is independent of
the choice of $\xi$.
If the rank of $h$ is $2$,
$h$ can be treated as a nondegenerate metric on $\D$.
This is the basic assumption on which Blaschke \cite{blaschkebook2}
developed the affine differential geometry of surfaces.
We can define canonical transversal vector field
by the properties that
the induced volume element on $\D$ coincides with the volume element
of the affine metric $h$,
namely
\begin{equation}\label{volumecondition}
{\det \left[\pd{x} f, \pd{y} f, \xi\right]}^2
= \left|h_{11} h_{22} - \left(h_{12}\right)^2\right|,
\end{equation}
and both $\pd{x}\xi$ and $\pd{y} \xi$ are tangent to $f \left(\D\right)$.
Such a $\xi$ is unique up to sign,
and is called the \textit{affine normal} field.
The immersion $f$ with the affine normal field $\xi$ is called
the \textit{Blaschke immersion},
and the map $\tilde F\colon \D \to \mathrm{GL}_3 \R$,
$\left(x, y\right) \mapsto \left[\pd{x}f, \pd{y}f, \xi\right]$ is
called a \textit{moving frame} of $f$.
It is known that,
for a Blaschke immersion $f$,
half of the Laplacian $(1/2) \Delta f$ relative to the affine metric $h$
is equal to the affine normal field $\xi$.

For a Blaschke immersion $f$,
the affine shape operator $s = \left[s_{i j}\right]$ is defined by
the Weingarten formula
\begin{align*}
\pd{x} \xi &= - s_{11} \pd{x} f - s_{21} \pd{y} f,\\
\pd{y} \xi &= - s_{12} \pd{x} f - s_{22} \pd{y} f.
\end{align*}
If the affine shape operator $s$ is proportional to the  identity,
that is $s = H \id$,
then the Blaschke immersion $f$ is called an \textit{affine sphere}.
By virtue of the integrability condition,
this function $H$ should be a constant.
If $H=0$ then $f$ is called an \textit{improper} affine sphere,
and if $H \neq 0$ then $f$ is called a \textit{proper} affine sphere.
On use of a scaling transformation of the ambient space
and a change of the orientation $f\mapsto -f$,
we can normalize the constant $H$ to be $- 1$ if $H \neq 0$.
For example,
a graph immersion
\begin{equation*}
f \left(x, y\right) =
\begin{bmatrix}
x\\
y\\
\psi \left(x, y\right)
\end{bmatrix}
\end{equation*}
is an improper affine sphere with the constant affine normal field
$\xi \left(x, y\right) = {}^\mathrm{t} \left[0, 0, 1\right]$
if and only if $\psi$ satisfies the
Monge-Amp\`{e}re equation
\begin{equation}\label{monge-ampere:without-boundary}
\left(\pd{x}^2 \psi\right)
\left(\pd{y}^2 \psi\right)
- \left(\pd{x} \pd{y} \psi\right)^2
= \pm 1.
\end{equation}
In general,
if $f$ is an improper affine sphere,
then the affine normals are parallel in $\R^3$.
If $f$ is a proper affine sphere,
then the affine normals meet at one point in $\R^3$,
which is called the center.

Let $f$ be an affine sphere whose affine metric $h$ has signature
$\left(+, -\right)$.
We call such an $f$ \textit{indefinite affine sphere} in short.
The affine normal field may be expressed as
\begin{equation*}
\xi = - H f + \left(1 + H\right) \xi_0,
\end{equation*}
where $H \in \left\{- 1, 0\right\}$ and
$\xi_0$ is a constant vector.
By an appropriate affine transformation on $\R^3$
we can fix $\xi_0$ to be ${}^\mathrm{t} \left[0, 0, 1\right]$.
We shall employ the asymptotic coordinate systems
with respect to $h$,
and substitute the symbols $\left(x, y\right)$ with $\left(u, v\right)$.
As far as $\det \tilde{F} \neq 0$,
without loss of generality
we can assume that $\det \tilde{F} > 0$,
and define three functions $\omega$, $A$, $B$ by
\begin{equation*}
\omega = \det \tilde F,\quad
A = \det \left[\pd{u} f,\, \pd{u}^2 f,\, \xi\right],\quad
B = \det \left[\pd{v}^2 f,\, \pd{v} f,\, \xi\right].
\end{equation*}
We rewrite the Gauss-Weingarten formulas as
\begin{equation}\label{eq:FUVtilde}
\pd{u}\tilde F = \tilde F
\begin{bmatrix}
\pd{u} \log \omega & 0 & - H\\
A\, \omega^{-1} & 0 &0\\
0 & \omega & 0
\end{bmatrix},\quad
\pd{v}\tilde F =  \tilde F
\begin{bmatrix}
0 & B \omega^{-1} & 0\\
0 & \pd{v} \log \omega & -H\\
\omega  & 0 & 0
\end{bmatrix}.
\end{equation}
The compatibility condition between these two equations,
namely $\pd{u} \pd{v} \tilde F = \pd{v} \pd{u} \tilde F$,
is given by the three partial differential equations
\begin{gather}
\pd{v}\pd{u}{\log \omega}+ A B \omega^{-2} +H \omega
=\label{tzitzeica-liouville}
0,\\
\pd{v} A = 0,\quad
\pd{u} B = 0.\label{tzitzeica-liouville:aux}
\end{gather}
The equations in \eqref{tzitzeica-liouville:aux} are clearly solved as
$A = A \left(u\right)$ and $B = B \left(v\right)$ respectively.
Equation \eqref{tzitzeica-liouville} is called the \textit{Tzitzeica equation}
if $H = - 1$,
and the \textit{Liouville equation} if $H = 0$.
It is known that general solutions to the Liouville equation
are given by two real functions of one variable,
see the formula \eqref{eq:solution} in Remark \ref{rem:generalsol-liouville}.

Conversely,
a triad $\left(\omega, A, B\right)$ of solutions to
the system \eqref{tzitzeica-liouville}--\eqref{tzitzeica-liouville:aux},
or in other words,
a pair of the affine metric $h = 2 \omega\, du dv$ and
the cubic form $C = A\, du^3 + B\, dv^3$,
gives a unique indefinite affine sphere up to equiaffine transformations.
Since the system \eqref{tzitzeica-liouville}--\eqref{tzitzeica-liouville:aux}
is invariant under a transformation
\begin{equation*}
A \mapsto \lambda^3 A,\quad
B \mapsto \lambda^{-3} B,\quad
\lambda \in \R^{\times} = \R \setminus \left\{0\right\},
\end{equation*}
if $\left(\omega, A, B\right)$ is a triad of solutions
to \eqref{tzitzeica-liouville}--\eqref{tzitzeica-liouville:aux}
then $\left(\omega,\, \lambda^3 A,\, \lambda^{-3} B\right)$ is also a triad
of solutions to the same system.
Therefore,
there exists a family of indefinite affine spheres
$\left\{f^\lambda\right\}$ that is parametrized by $\lambda \in \R^{\times}$,
and $f^1$ is the original affine sphere $f$.
This $1$-parameter family of indefinite affine spheres,
which we call the \textit{associated family} of $f$,
has the property that they have the same affine metric and
the same constant affine mean curvature $(1/2) \tr{s} = H$.

\subsection{Loop group description}

We define a \textit{gauged frame} $F$ of $f^\lambda$ by
\begin{equation}\label{def:extended}
F = \left[\pd{u}f^\lambda,\, \pd{v}f^\lambda,\, \xi^\lambda\right]
\diag \left(\lambda^{-1} \omega^{-1/2},\, \lambda\, \omega^{-1/2},\, 1\right),
\end{equation}
where $\xi^\lambda =-H f^\lambda + \left(1 + H\right) \xi_0$.
For any $\left(\left(u, v\right), \lambda\right) \in \D \times \R^{\times}$,
the frame $F$ takes values in the special linear group $\SL$ and satisfies
the partial differential equations
\begin{equation}\label{PDE:extended}
\pd{u} F = F U,\quad
\pd{v} F = F V,
\end{equation}
where
\begin{equation}\label{PDE:extended-UV}
\begin{split}
U &=
\begin{bmatrix}
\left(1/2\right) \pd{u} \log \omega & 0 & - \lambda H \omega^{1/2}\\
\lambda A \omega^{-1} &  - \left(1/2\right) \pd{u} \log \omega &0\\
0 & \lambda \omega^{1/2} & 0
\end{bmatrix},\\
V &=
\begin{bmatrix}
- \left(1/2\right) \pd{v} \log \omega & \lambda^{-1}B \omega^{-1} & 0\\
0  & \left(1/2\right) \pd{v} \log \omega & - \lambda^{-1} H \omega^{1/2}\\
\lambda^{-1} \omega^{1/2} & 0 & 0
\end{bmatrix}.
\end{split}
\end{equation}
By multiplying $F$ by some constant matrix from the left if necessary,
we can assume that
\begin{equation}\label{eq:initial}
F \left(0, 0, \lambda\right) = \id
\end{equation}
at the base point $(u, v)=(0, 0)$.
The gauged frame $F$ which satisfies the system
\eqref{PDE:extended}--\eqref{PDE:extended-UV}
with initial condition \eqref{eq:initial}
will be called the \textit{extended frame}
of an indefinite affine sphere $f$.

Moreover one can check that the matrices $U = U \left(\lambda\right)$ and
$V = V \left(\lambda\right)$ in \eqref{PDE:extended-UV} satisfy
\begin{align}
- {}^\mathrm{t} U \left(-\lambda\right) T
&=\label{twisted-T-liealg}
T\, U \left(\lambda\right),\quad
- {}^\mathrm{t} V \left(-\lambda\right) T
=T\, V \left(\lambda\right),\\
U \left(q \lambda\right)
&=\label{twisted-Q-liealg}
Q\, U \left(\lambda\right) Q^{-1},\quad
V \left(q \lambda\right)
=
Q\, V \left(\lambda\right) Q^{-1},
\end{align}
where
\begin{equation*}
T =
\begin{bmatrix}
0 & 1 & 0\\
1 & 0 & 0\\
0 & 0 & -H
\end{bmatrix},\quad
Q = \diag \left(q, q^2, 1\right),\quad
q = e^{2 \pi \sqrt{-1}/3}.
\end{equation*}
Therefore $F$ must satisfy
\begin{align}
{{}^\mathrm{t} F \left(-\lambda\right)}^{-1} T
&=\label{twisted-T}
T F \left(\lambda\right),\\
F \left(q \lambda\right)
&=\label{twisted-Q}
Q F \left(\lambda\right) Q^{-1},
\end{align}
and hence the loop algebra and the loop group
can be introduced (\cite{MR1873045}) as
\begin{align*}
\Lsl &= \left\{\Phi\colon \mathbb{S}^1 \to \slc \;\left|\;
\overline{\Phi \left(\overline{\lambda}\right)}
= \Phi \left(\lambda\right),\ %
- {}^\mathrm{t} \Phi \left(-\lambda\right) T
= T\, \Phi \left(\lambda\right),\ %
\Phi \left(q \lambda\right)
= Q\, \Phi \left(\lambda\right) Q^{-1}\right.\right\},\\
\LSL &= \left\{\phi\colon \mathbb{S}^1 \to \SLC \;\left|\;
\overline{\phi \left(\overline{\lambda}\right)}
= \phi \left(\lambda\right),\ %
{{}^\mathrm{t} \phi \left(-\lambda\right)}^{-1} T
= T\, \phi \left(\lambda\right),\ %
\phi \left(q \lambda\right)
= Q\, \phi \left(\lambda\right) Q^{-1}\right.\right\}.
\end{align*}
Here the overlines mean complex conjugate,
and $\slc$ is the Lie algebra of $\SLC$,
that is, $\slc$ is the set of trace-free matrices.
It should be noted that the extended frame $F$ is $\LSL$-valued
function on $\D$,
because $F$,
which is originally defined on $\lambda \in \R^\times$,
can be analytically extended to $\myC^{\times}$.
The subgroups
\begin{align*}
\LSLP &= \left\{\phi \in \LSL \;\big|\;
\phi \left(\lambda\right) =
\textstyle\sum_{k = 0}^\infty \lambda^k \phi_k\right\},\\
\LSLN &= \left\{\phi \in \LSL \;\big|\;
\phi \left(\lambda\right) =
\textstyle\sum_{k = 0}^\infty \lambda^{-k} \phi_k\right\}
\end{align*}
will play important roles in the following discussions,
together with
\begin{align*}
\LSLPN &= \left\{\phi \in \LSLP \;\big|\;
\phi \left(\lambda\right) = \id +
\textstyle\sum_{k = 1}^\infty \lambda^k \phi_k\right\},\\
\LSLNN &= \left\{\phi \in \LSLN \;\big|\;
\phi \left(\lambda\right) = \id +
\textstyle\sum_{k = 1}^\infty \lambda^{-k} \phi_k\right\}.
\end{align*}
Similarly subalgebras $\LslP$, $\LslN$ and $\LslPN$, $\LslNN$ are defined.
We note that $\Phi_\pm \in \Lambda^\pm_\ast \mysl$ have the expansion
$\Phi_\pm \left(\lambda\right) = \sum_{k=1}^\infty \lambda^{\pm k} \Phi_k$,
where double signs correspond.
\begin{Proposition}\label{prop:twisted-Q}
Let $F \left(\lambda\right) = \sum_{k=-\infty}^\infty \lambda^k F_k
\in \LSLC$ satisfy the twisted condition \eqref{twisted-Q}.
Then coefficient matrices of $F \left(\lambda\right)$ are of the form
\begin{equation*}
F_{3l} =\diag \left(\ast, \ast, \ast\right),\quad
F_{3l+1} =
\begin{bmatrix}
0 & 0 & \ast\\
\ast & 0 & 0\\
0 & \ast & 0
\end{bmatrix},\quad
F_{3l+2} =
\begin{bmatrix}
0 & \ast & 0\\
0 & 0 & \ast\\
\ast & 0 & 0
\end{bmatrix}
\end{equation*}
for all integers $l$.
\end{Proposition}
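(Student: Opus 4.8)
The plan is to substitute the Laurent expansion of $F$ into the twisted condition \eqref{twisted-Q} and to compare coefficients of each power of $\lambda$. On the left-hand side one has $F(q\lambda) = \sum_{k} q^k \lambda^k F_k$, while the right-hand side is $Q F(\lambda) Q^{-1} = \sum_{k} \lambda^k\, Q F_k Q^{-1}$. Since the Laurent coefficients of a function in $\LSLC$ are uniquely determined, the functional equation collapses to the finite-dimensional relation
\[
Q F_k Q^{-1} = q^k F_k \qquad \text{for every } k \in \Z.
\]
Thus each coefficient matrix is an eigenvector of the conjugation map $M \mapsto Q M Q^{-1}$ with eigenvalue $q^k$, and the whole statement reduces to identifying these eigenspaces.

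First I would make the action of conjugation by $Q$ explicit at the level of entries. Because $Q = \diag(q, q^2, 1)$ is diagonal, conjugation multiplies the $(i,j)$-entry of any matrix $M = [m_{ij}]$ by $Q_{ii}/Q_{jj}$. Writing $Q = \diag(q^{d_1}, q^{d_2}, q^{d_3})$ with $(d_1, d_2, d_3) \equiv (1, 2, 0) \pmod 3$, this scaling factor is exactly $q^{d_i - d_j}$. Consequently the eigenvalue relation forces the $(i,j)$-entry of $F_k$ to vanish unless $d_i - d_j \equiv k \pmod 3$, so the support of $F_k$ is governed entirely by the residue of $k$ modulo $3$.

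The remaining work is a routine case analysis on $k \bmod 3$. For $k \equiv 0$ the surviving entries are those with $d_i \equiv d_j$; since $d_1, d_2, d_3$ are pairwise distinct modulo $3$, only the diagonal entries survive, which gives the stated form of $F_{3l}$. For $k \equiv 1$ and $k \equiv 2$ I would list the six off-diagonal pairs $(i,j)$ and record which differences $d_i - d_j$ equal $1$, respectively $2$, modulo $3$: the values $1$ are attained at $(1,3),(2,1),(3,2)$ and the values $2$ at $(1,2),(2,3),(3,1)$. These are precisely the off-diagonal patterns claimed for $F_{3l+1}$ and $F_{3l+2}$.

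There is no genuine obstacle here; the proof is essentially a one-line coefficient comparison followed by bookkeeping of residues. The only points requiring a little care are invoking the uniqueness of Laurent coefficients to pass from the functional equation to the pointwise eigenvalue relation, and keeping track of the differences $d_i - d_j \pmod 3$. It is worth emphasizing that neither the $T$-twisting condition, the reality condition, nor the determinant constraint defining $\LSLC$ enters the argument: the entire block structure is a formal consequence of the single relation \eqref{twisted-Q}.
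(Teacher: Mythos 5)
Your proposal is correct and takes essentially the same route as the paper: comparing Laurent coefficients in \eqref{twisted-Q} yields $Q F_k Q^{-1} = q^k F_k$ for all $k$, after which the block structure follows from the fact that conjugation by the diagonal matrix $Q$ scales the $(i,j)$-entry by $q^{d_i - d_j}$. In fact your residue bookkeeping is more explicit than the paper's proof, which stops at the eigenvalue relations and leaves the entry-by-entry verification implicit.
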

\begin{proof}
Because $Q F \left(\lambda\right) Q^{-1}
= \sum_{k \in \Z} \lambda^k Q F_k Q^{-1}$ and
\begin{gather*}
F \left(q \lambda\right)
= \sum_{k \in \Z} \lambda^k q^k F_k
= \sum_{l \in \Z}
\left(
\lambda^{3l} F_{3l}
+ \lambda^{3l+1} q F_{3l+1}
+ \lambda^{3l+2} q^2 F_{3l+2}
\right),
\end{gather*}
we have $F_{3l} = Q F_{3l} Q^{-1}$,
$q F_{3l+1} = Q F_{3l+1} Q^{-1}$,
and $q^2 F_{3l+2} = Q F_{3l+2} Q^{-1}$.
\end{proof}
We now recall Birkhoff decomposition theorem for the loop group $\LSL$.
\begin{Theorem}[Birkhoff decomposition \cite{MR1873045}, \cite{MR900587}]\label{thm:Birkhoff}
 The respective multiplication maps
\begin{equation*}
\LSLPN \times \LSLN \to \LSL
\quad\text{and}\quad
\LSLNN  \times \LSLP \to \LSL
\end{equation*}
are diffeomorphisms onto its images.
Moreover, the images $\LSLPN \cdot \LSLN $
and $\LSLNN  \cdot \LSLP$ are both open and dense in $\LSL$,
which will be called the \emph{big cells}.
\end{Theorem}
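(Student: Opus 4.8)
The plan is to derive the twisted real statement from the classical Birkhoff decomposition of the free complex loop group $\LSLC$ and then to verify that the three symmetries cutting out $\LSL \subset \LSLC$ are compatible with the factorisation. First I would invoke the untwisted theorem of Pressley--Segal \cite{MR900587}: the multiplication maps $\Lambda^+_* \SLC \times \Lambda^- \SLC \to \LSLC$ and $\Lambda^-_* \SLC \times \Lambda^+ \SLC \to \LSLC$ are diffeomorphisms onto open dense subsets of $\LSLC$, where the decorated symbols denote the analogous subgroups of $\LSLC$ without reality or twisting. Write $\mathcal{B} \subset \LSLC$ for the first of these complex big cells. All that remains is to intersect with $\LSL$.

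The first step is uniqueness, which also yields injectivity. Suppose $\phi = \phi_+ \phi_- = \psi_+ \psi_-$ with $\phi_+, \psi_+ \in \LSLPN$ and $\phi_-, \psi_- \in \LSLN$. Then $\psi_+^{-1} \phi_+ = \psi_- \phi_-^{-1}$, where the left-hand side extends holomorphically to $\{|\lambda| < 1\}$ with value $\id$ at $\lambda = 0$, and the right-hand side extends holomorphically to $\{|\lambda| > 1\} \cup \{\infty\}$. As the two sides agree on $\mathbb{S}^1$, they glue to a holomorphic $\SLC$-valued map on the whole Riemann sphere; each matrix entry is then a holomorphic function on a compact Riemann surface, hence constant, so the map equals its value $\id$ at $\lambda = 0$. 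Therefore $\phi_\pm = \psi_\pm$, and the same reasoning handles the second factorisation.

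The second step is that the defining symmetries preserve the factor subgroups. Write the reality, $T$- and $Q$-operations as the automorphisms $\rho(\phi)(\lambda) = \overline{\phi(\overline{\lambda})}$, $\sigma(\phi)(\lambda) = T^{-1}\,{}^{\mathrm t}\phi(-\lambda)^{-1} T$ and $\nu(\phi)(\lambda) = Q^{-1}\phi(q\lambda)Q$, so that $\LSL = \{\phi \in \LSLC : \rho(\phi) = \sigma(\phi) = \nu(\phi) = \phi\}$. A direct inspection of Fourier expansions shows that each of $\rho, \sigma, \nu$ carries $\LSLP$ into $\LSLP$ and $\LSLN$ into $\LSLN$, and sends the normalisation $\id$ at $\lambda = 0$ to $\id$; hence each preserves $\LSLPN$ and $\LSLN$ as well as $\LSLNN$ and $\LSLP$. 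Consequently, if $\phi \in \LSL \cap \mathcal{B}$ has complex factorisation $\phi = \phi_+ \phi_-$, then for any $\alpha \in \{\rho, \sigma, \nu\}$ we obtain a second factorisation $\phi = \alpha(\phi_+)\,\alpha(\phi_-)$ of the same type; uniqueness forces $\alpha(\phi_\pm) = \phi_\pm$. Thus the complex factors automatically satisfy all three conditions, so $\phi_+ \in \LSLPN$ and $\phi_- \in \LSLN$ in the twisted real sense, and the twisted multiplication map is a bijection onto $\LSL \cap \mathcal{B}$; the second multiplication map is treated in exactly the same way.

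It remains to establish the topological and smoothness claims; here I expect density to be the genuine obstacle. Openness of $\LSL \cap \mathcal{B}$ in $\LSL$ is immediate, as it is the preimage of the open set $\mathcal{B}$ under the inclusion $\LSL \hookrightarrow \LSLC$, and the inverse of the twisted multiplication map is smooth because it is the restriction of the smooth complex factorisation of Pressley--Segal; at the infinitesimal level this is reflected by the topological direct-sum splittings $\Lsl = \LslPN \oplus \LslN$ and $\Lsl = \LslNN \oplus \LslP$, which follow at once from the grading by Fourier degree and feed the inverse function theorem. For density one must show that the complement of $\mathcal{B}$ meets $\LSL$ in a nowhere-dense set. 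In the complex theory the non-big-cell loops form the lower Birkhoff strata, of positive codimension; to ensure that approximating loops can be chosen inside the real twisted group, I would either symmetrise a generic perturbation over the finite symmetry data carried by $\rho, \sigma, \nu$, or observe that membership in $\mathcal{B}$ is governed by the non-vanishing of an invariant analytic function, whose zero set is then automatically nowhere dense in $\LSL$.
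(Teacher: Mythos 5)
The paper offers no proof of Theorem \ref{thm:Birkhoff} at all---it is quoted from \cite{MR1873045} and \cite{MR900587}---so your proposal must stand on its own; its skeleton (complex Birkhoff factorisation, Liouville-type uniqueness, invariance of the factors under the defining symmetries) is indeed the standard route taken in that literature. Two of your steps, however, have genuine gaps. The lesser one: your automorphism $\sigma(\phi)(\lambda)=T^{-1}\,{}^{\mathrm{t}}\phi(-\lambda)^{-1}T$ presupposes that $T$ is invertible, but here
\begin{equation*}
T=\begin{bmatrix}0&1&0\\ 1&0&0\\ 0&0&-H\end{bmatrix}
\end{equation*}
is singular when $H=0$, which is precisely the improper case this paper needs. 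For $H=0$ the group $\LSL$ is therefore \emph{not} the fixed-point set of your three automorphisms, and the step ``uniqueness forces $\alpha(\phi_{\pm})=\phi_{\pm}$'' is unavailable. This is repairable: $\mu(\phi)(\lambda)={}^{\mathrm{t}}\phi(-\lambda)^{-1}$ is an automorphism of the complex loop group that needs no $T^{-1}$ and preserves the factor subgroups; writing the $T$-condition as $\mu(\phi)\,T=T\,\phi$ and inserting $\phi=\phi_{+}\phi_{-}$ gives $\mu(\phi_{+})^{-1}T\phi_{+}=\mu(\phi_{-})\,T\,\phi_{-}^{-1}$, and your own gluing argument shows both sides equal the constant $T$, so each factor satisfies the $T$-condition. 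But as written, your Step 2 fails for the case the theorem is actually used for.

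The serious gap is density, which you correctly isolate and then do not prove. Your first suggestion (``symmetrise a generic perturbation'') has no defined meaning: there is no averaging over $\rho,\sigma,\nu$ in a nonabelian group, and a complex perturbation of $\phi\in\LSL$ landing in the big cell need not produce anything in $\LSL$. Your second suggestion---that the zero set of an invariant analytic function is ``automatically nowhere dense in $\LSL$''---is a false inference unless one knows the function does not vanish identically on any connected component of $\LSL$, and in loop groups this failure mode is real: for $\Lambda \mathrm{U}(1)\subset\Lambda\mathrm{GL}_1\myC$ the complex big cell meets the real form exactly in the winding-number-zero loops, so the corresponding analytic function vanishes identically on every other component and the big cell is \emph{not} dense there. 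In your setting the issue cannot be waved away: both big cells are continuous images of $\LSLPN\times\LSLN$ (resp. $\LSLNN\times\LSLP$), and $\LSLN$ deformation retracts onto the constant loops of $\LSL$, which form $\{\diag(x,x^{-1},1):x\in\R^{\times}\}$ and have exactly two components; hence the big cell has at most two components, so density already forces $\pi_0(\LSL)\le 2$, and any proof must in effect control $\pi_0(\LSL)$---which your proposal never does. The standard way to close this gap (and, in effect, what the citation to \cite{MR1873045} supplies) is the Birkhoff stratification: $\rho$, $\sigma$, $\nu$ permute the lower strata of the complex loop group, so each lower stratum either misses $\LSL$ or, being a complex submanifold of positive codimension invariant under the antiholomorphic involution, meets it in a real submanifold of the same positive codimension; the complement of the big cell in $\LSL$ is then a closed, countable union of nowhere dense sets, and the Baire property gives density. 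Without this, or an equivalent analysis, the assertion the paper actually relies on remains unproven.
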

Roughly speaking,
Theorem \ref{thm:Birkhoff} says that for almost all $g \in \LSL$,
there uniquely exist pairs
$\left(g_+, g_-\right) \in \LSLPN \times \LSLN$
and $\left(\tilde{g}_-, \tilde{g}_+\right) \in \LSLNN \times \LSLP$
such that
\begin{equation}\label{eq:DecompositionBirkhoff}
g = g_+ g_- = \tilde g_- \tilde g_+.
\end{equation}
The following theorem has been proven
for indefinite proper affine spheres $\left(H = - 1\right)$
in \cite[Proposition 5.2 and Theorems 7.1, 6.1]{MR1873045}.
Here we show a proof which is valid for both $H=-1$ or $H=0$.
\begin{Theorem}\label{thm:Rep}
Let $f$ be an indefinite affine sphere,
and $\left(u, v\right) \in \D$ be its asymptotic coordinates.
Consider the Birkhoff decompositions for
the extended frame $F$
near $\left(u, v\right) = \left(0, 0\right)$ as
\begin{equation}\label{def:F+,F-,G-,G+}
F = F_+ F_- = G_- G_+,
\end{equation}
where $F_+ \in \LSLPN$, $F_- \in \LSLN$, $G_+ \in \LSLP$
and $G_- \in \LSLNN$.
Then $F_+$ and $G_-$ do not depend on $v$ and $u$ respectively,
and their Maurer-Cartan forms are given as
\begin{equation}\label{ODE:F+andG-}
F_+^{-1} d F_+ = \xi_+,\quad
G_-^{-1} d G_- = \xi_-,
\end{equation}
where
\begin{equation}\label{eq:pot}
\xi_+ = \lambda
\begin{bmatrix}
0 & 0 & -H \alpha\\
\beta & 0 & 0\\
0 & \alpha & 0
\end{bmatrix}
du,\quad
\xi_- = \lambda^{-1}
\begin{bmatrix}
0 & \sigma & 0\\
0 & 0 & -H \rho\\
\rho & 0 & 0
\end{bmatrix}
dv.
\end{equation}
Here the functions $\alpha$, $\beta$ depend only on $u$,
and $\rho$, $\sigma$ only on $v$.
Moreover,
$\alpha$ and $\rho$ have no zeros near the base point
$\left(u, v\right) = \left(0, 0\right)$.

Conversely,
let $\left(\xi_+, \xi_-\right)$ be a pair of $1$-forms as \eqref{eq:pot},
and $\left(F_+, G_-\right)$ be a pair of solutions to
the linear ordinary differential equations
\begin{equation}\label{eq:ODEs}
d F_+ = F_+ \xi_+,\quad
d G_- = G_- \xi_-
\end{equation}
with the initial condition $F_+ \left(0, \lambda\right)
= G_- \left(0, \lambda\right) = \id$.
Define $V_+ \in \LSLPN$ and $V_- \in \LSLN$ by
the Birkhoff decomposition for  $G_-^{-1} F_+$ near $(u, v) =(0, 0)$ as
\begin{equation}\label{def:V+V-}
G_-^{-1} F_+ = V_+ V_-^{-1},
\end{equation}
and write $\hat{F} = F_+ V_- = G_- V_+$.
Then there exists a diagonal matrix
$D = \diag \left(d,\, d^{-1}, 1\right)$ with some non-vanishing
function $d = d \left(u, v\right)$ such that $D_0{}^{-1} \hat{F} D$,
where $D_0 = D|_{(u, v)=(0, 0)}$,
is the extended frame of an indefinite affine sphere $f$
with the cubic differential
$C= \alpha^2 \beta\, du^3 + \rho^2 \sigma\, dv^3$.
In particular,
in case of proper affine spheres $\left(H = - 1\right)$,
the third column of $D_0{}^{-1} \hat{F} D$ directly gives
the position vector of $f$.
\end{Theorem}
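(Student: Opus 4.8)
The plan is to exploit throughout the $\lambda$-grading of the Maurer--Cartan form of the extended frame, which by \eqref{PDE:extended-UV} reads $F^{-1}dF=(U_0+\lambda U_1)\,du+(V_0+\lambda^{-1}V_{-1})\,dv$, so that the $du$-part carries only the powers $\lambda^0,\lambda^1$ and the $dv$-part only $\lambda^0,\lambda^{-1}$; here $U_0,V_0$ denote the diagonal ($\lambda^0$) parts. For the forward statement I would first show $\pd{v}F_+=0$. Differentiating $F=F_+F_-$ against $\pd{v}F=FV$ and rearranging gives
\begin{equation*}
F_+^{-1}\pd{v}F_+ + (\pd{v}F_-)F_-^{-1} = F_- V F_-^{-1}.
\end{equation*}
The three terms lie in $\LslPN$, $\LslN$ and (because $V$ has only nonpositive powers and $F_-\in\LSLN$) in $\LslN$ respectively, so comparing strictly positive powers forces $F_+^{-1}\pd{v}F_+=0$. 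Running the same computation against $\pd{u}F=FU$ identifies $F_+^{-1}\pd{u}F_+$ with the strictly positive part of $F_-UF_-^{-1}$, i.e.\ the single term $\lambda\,(F_-)_0U_1(F_-)_0^{-1}$, where $(F_-)_0=F_-|_{\lambda=\infty}$. By Proposition \ref{prop:twisted-Q} this is diagonal, and the $T$-twisting together with $\det=1$ pins it down to $\diag(p,p^{-1},1)$; conjugating $U_1$ by it reproduces exactly $\xi_+$ of \eqref{eq:pot}, with $\alpha,\beta$ depending on $u$ only since $F_+$ does. The mirror argument on $F=G_-G_+$ gives $\pd{u}G_-=0$ and the stated $\xi_-$. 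Finally $\alpha(0)=\omega(0)^{1/2}\neq0$ by \eqref{eq:initial} and $\omega>0$, so $\alpha$, and symmetrically $\rho$, is nonzero near the base point.

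For the converse I abbreviate the $\lambda$-independent matrices in \eqref{eq:pot} by $\xi_+=\lambda M_+\,du$, $\xi_-=\lambda^{-1}M_-\,dv$. A direct check shows $M_+$ has the off-diagonal pattern of Proposition \ref{prop:twisted-Q} and satisfies ${}^{\mathrm{t}}M_+T=TM_+$, so $\xi_+\in\LslPN$ and hence the solution $F_+$ of \eqref{eq:ODEs} lies in $\LSLPN$; similarly $G_-\in\LSLNN$. Since $\id$ lies in the big cell, the Birkhoff factorization \eqref{def:V+V-} exists near $(0,0)$ by Theorem \ref{thm:Birkhoff}, so $\hat F=F_+V_-=G_-V_+$ is a well-defined $\LSL$-valued map; moreover $F_+(0)=G_-(0)=\id$ gives $V_\pm|_{(0,0)}=\id$ and hence $\hat F(0,0)=\id$.

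The heart of the argument is to compute $\hat F^{-1}d\hat F$ from both factorizations. Using $\hat F=G_-V_+$ with $\pd{u}G_-=0$ gives $\hat F^{-1}\pd{u}\hat F=V_+^{-1}\pd{u}V_+\in\LslPN$, while $\hat F=F_+V_-$ with $\pd{v}F_+=0$ gives $\hat F^{-1}\pd{u}\hat F=V_-^{-1}(\lambda M_+)V_-+V_-^{-1}\pd{u}V_-$, which carries no power above $\lambda^1$; comparing, $\hat F^{-1}\pd{u}\hat F=\lambda\,(V_-)_0^{-1}M_+(V_-)_0$ is a pure $\lambda^1$ term. The same two-sided comparison in $v$, now using $(V_+)_0=\id$, yields $\hat F^{-1}\pd{v}\hat F=\lambda^{-1}M_-+B_0$ with $B_0=(V_-)_0^{-1}\pd{v}(V_-)_0$ its diagonal $\lambda^0$-part. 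As before $(V_-)_0=\diag(a,a^{-1},1)$ by the twisting, whence $B_0=\diag(\pd{v}\log a,-\pd{v}\log a,0)$.

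It remains to gauge $\hat F$ into the shape \eqref{PDE:extended-UV}. I would set $\tilde F=D_0^{-1}\hat F D$ with $D=\diag(d,d^{-1},1)$ and match $\tilde F^{-1}d\tilde F$ against $U\,du+V\,dv$. The off-diagonal entries force $\omega^{1/2}=\alpha/(ad)$ from the $du$-part and $\omega^{1/2}=d\rho$ from the $dv$-part; both the $(1,3),(3,2)$ and the $(2,3),(3,1)$ pairs collapse to a single condition each precisely because of the twisted form of $M_\pm$ and of $(V_-)_0$. These two requirements are reconciled by the unique choice $d^2=\alpha/(a\rho)$, which is nonvanishing near $(0,0)$ since $a(0,0)=1$, and yields $\omega=\alpha\rho/a$, $A=\alpha^2\beta$ and $B=\rho^2\sigma$, i.e.\ the cubic form claimed. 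The main obstacle is then the diagonal bookkeeping: one must verify that the $\lambda^0$-diagonal of $\tilde F^{-1}d\tilde F$ equals that of $U\,du+V\,dv$. The $du$-condition $\pd{u}\log d=\tfrac12\pd{u}\log\omega$ is automatic from $d^2=\alpha/(a\rho)$, and the $dv$-condition reduces, after substituting $d$ and $\omega$, to the single identity $B_0=\diag(\pd{v}\log a,-\pd{v}\log a,0)$ --- which is exactly the value of $B_0$ found above. Hence $\tilde F^{-1}d\tilde F=U\,du+V\,dv$ with $\tilde F(0,0)=\id$, so $\pd{u}\pd{v}\tilde F=\pd{v}\pd{u}\tilde F$ returns the system \eqref{tzitzeica-liouville}--\eqref{tzitzeica-liouville:aux} and identifies $\tilde F$ as the extended frame of the indefinite affine sphere with cubic differential $C=\alpha^2\beta\,du^3+\rho^2\sigma\,dv^3$. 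When $H=-1$ we have $\xi^\lambda=f^\lambda$, so the third column of $\tilde F$ is the position vector, which gives the final assertion.
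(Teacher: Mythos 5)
Your proposal is correct and follows essentially the same route as the paper's own proof: the same $\lambda$-grading argument for the $v$-independence of $F_+$, the same conjugation of $U^1$ by the diagonal $\lambda^0$-coefficient $\diag(p,p^{-1},1)$ of $F_-$ (pinned down by the $Q$- and $T$-twistings and $\det=1$) to identify $\xi_+$, and in the converse the same two-sided computation of $\hat{F}^{-1}d\hat{F}$ followed by the diagonal gauge $D=\diag(d,d^{-1},1)$ matched against \eqref{PDE:extended-UV}. The one detail you pass over is that defining a real $d$ from $d^2=\alpha/(a\rho)$ requires this quantity to be \emph{positive}, not merely nonvanishing; the paper secures this by replacing $u\mapsto -u$ and/or $v\mapsto -v$ if necessary so that $\omega=\alpha\rho/a>0$.
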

\begin{Remark}
The pair of $1$-forms defined in \eqref{eq:pot} will be called the pair of
\textit{normalized potentials} for an indefinite affine sphere.
It should be noted that the resulting indefinite affine sphere
which is constructed from a pair of normalized potentials
would have singularities
where $G_-^{-1} F_+$ is outside of the big-cell
for the Birkhoff decomposition.
\end{Remark}

\begin{proof}
Let $F$ be an extended frame,
and define $F_+$ and $F_-$ by \eqref{def:F+,F-,G-,G+}.
Therefore we have $F_+ = F F_-^{-1}$ and so that
\begin{equation*}
F_+^{-1} \pd{v}F_+
= F_- F^{-1} \pd{v} \left(F F_-^{-1}\right)
= \left(F_- V - \pd{v}F_-\right) F_-^{-1},
\end{equation*}
where $V$ is given by \eqref{PDE:extended-UV}.
Since $V$ takes values in $\LslN$,
the right-hand side takes values in it.
Moreover since $F_+$ takes values in $\LSLPN$,
the left-hand side takes values in $\LslPN$.
Thus we have $F_+^{-1} \pd{v}F_+ = 0$,
which shows that $F_+$ does not depend on $v$.
Similarly $\pd{u} G_- = 0$.

Next,
we compute $F_+^{-1} d F_+$ and $G_-^{-1} d G_-$.
We have
\begin{equation*}
F_+^{-1} d F_+
= F_- F^{-1} d \left(F F_-^{-1}\right)
= \left(F_- U - \pd{u} F_-\right) F_-^{-1} du
\end{equation*}
where $U$ is given by \eqref{PDE:extended-UV}.
Since $U$ has the form $U = U^0 + \lambda U^1$
and $F_-$ takes values in $\LSLN$,
we have
\begin{equation*}
\xi_+ = F_+^{-1} d F_+ = \left(X^0 + \lambda X^1\right) du.
\end{equation*}
Here $X^0 = 0$ because $\xi_+$ should be a $\LslPN$-valued $1$-form.
The twisted condition \eqref{twisted-Q-liealg} implies that
$X^1$ have the form
\begin{equation*}
X^1 =
\begin{bmatrix}
0 & 0 & x_{13}\\
x_{21} & 0 & 0\\
0 & x_{32} & 0
\end{bmatrix},
\end{equation*}
where $x_{ij}$ are some functions in $u$.
Further the twisted condition \eqref{twisted-T-liealg} implies that
$x_{13} = - H x_{32}$.
Thus we have \eqref{eq:pot} on setting $\alpha = x_{32}$ and
$\beta = x_{21}$.
If we write $F_-$ and its inverse as
\begin{equation*}
F_- = I^0 + \lambda^{-1} I^1 + \cdots,\quad
F_-^{-1} = J^0 + \lambda^{-1} J^1 + \cdots,
\end{equation*}
where
$\id = F_- F_-^{-1} =
I^0 J^0 + \lambda^{-1} \left(I^0 J^1 + I^1 J^0\right) + \cdots$,
then we in particular have $I^0 J^0 = \id$.
Further,
from the twisted conditions \eqref{twisted-Q} and \eqref{twisted-T},
it follows that
\begin{equation*}
I^0 = \diag \left(i,\, i^{-1},\, 1\right),\quad
J^0 = \diag \left(i^{-1},\, i,\, 1\right),
\end{equation*}
where $i$ is some function in $\left(u, v\right)$ with no zeros.
Noticing that $\lambda X^1 = \left(F_- U - \pd{u} F_-\right) F_-^{-1}$,
it is easy to see that $X^1$ is computed as
\begin{align*}
X^1 &= I^0 U^1 J^0 =
\begin{bmatrix}
0 & 0 & -H\, i\, \omega^{1/2}\\
A\, i^{-2} \omega^{-1} & 0 & 0\\
0 & i\, \omega^{1/2} & 0
\end{bmatrix},
\end{align*}
which shows that $\alpha = x_{32} = i\, \omega^{1/2}$ has no zeros.
Similarly we can show that $\rho$ has no zeros.

Conversely let $F_+$ and $G_-$ be the solutions of \eqref{eq:ODEs}
with initial condition $F_+ \left(0, \lambda\right)
= G_- \left(0, \lambda\right) =\id$ and consider the
Birkhoff decomposition near $\left(u, v\right) = \left(0, 0\right)$ as
\eqref{def:V+V-} with $V_+ \in \LSLPN$ and $V_- \in \LSLN$.
Then the Maurer-Cartan form of
$\hat{F} = F_+ V_- = G_- V_+$ is computed as
\begin{gather}
\hat{F}^{-1} d\hat{F}
=\label{eq:MC-Fhat-1}
\left(F_+ V_-\right)^{-1} d \left(F_+ V_-\right)
= V_-^{-1} \left(\xi_+ V_- + d V_-\right),\\
\hat{F}^{-1} d\hat{F}
=\label{eq:MC-Fhat-2}
\left(G_- V_+\right)^{-1} d \left(G_- V_+\right)
= V_+^{-1} \left(\xi_- V_+ + d V_+\right).
\end{gather}
We write
\begin{equation*}
V_- = K_0 + \lambda^{-1} K_1 + \cdots,\quad
V_-^{-1} = L_0 + \lambda^{-1} L_1 + \cdots
\end{equation*}
with the matrices
\begin{equation*}
K_0 = \diag \left(k, k^{-1}, 1\right),\quad
L_0 = \diag \left(k^{-1}, k, 1\right),
\end{equation*}
where $k$ is some function in $\left(u, v\right)$ which has no zeros.
Noticing that $V_+$ is $\LSLPN$-valued,
it follows from \eqref{eq:MC-Fhat-1}--\eqref{eq:MC-Fhat-2} that
$\hat{F}^{-1} d\hat{F}$ is given by
\begin{align*}
\hat{F}^{-1} \pd{u} \hat{F} &= \lambda\, L_0
\begin{bmatrix}
0 & 0 & - H \alpha\\
\beta & 0 & 0\\
0 & \alpha & 0
\end{bmatrix}
K_0,\\
\hat{F}^{-1} \pd{v} \hat{F} &= \lambda^{-1}
\begin{bmatrix}
0 & \sigma & 0\\
0 & 0 & - H \rho\\
\rho & 0 & 0
\end{bmatrix}
+ L_0\, \pd{v} K_0.
\end{align*}
We introduce a gauge $D = \diag \left(d, d^{-1}, 1\right)$,
then $F = \hat{F} D$ satisfies that
\begin{align*}
F^{-1} \pd{u} F
&= \lambda
\begin{bmatrix}
0 & 0 & - H \alpha k^{-1} d^{-1}\\
\beta k^2 d^2 & 0 & 0\\
0 & \alpha k^{-1} d^{-1} & 0
\end{bmatrix}
+
\begin{bmatrix}
d^{-1} \pd{u} d & 0 & 0\\
0 & - d^{-1} \pd{u} d & 0\\
0 & 0 & 0
\end{bmatrix},\\
F^{-1} \pd{v} F
&= \lambda^{-1}
\begin{bmatrix}
0 & \sigma d^{-2} & 0\\
0 & 0 & - H \rho d\\
\rho d & 0 & 0
\end{bmatrix}
+
\begin{bmatrix}
d^{-1} \pd{v} d + k^{-1} \pd{v} k & 0 & 0\\
0 & - d^{-1} \pd{v} d - k^{-1} \pd{v} k & 0\\
0 & 0 & 0
\end{bmatrix}.
\end{align*}
We define $\omega$, $A$, $B$ by
\begin{equation*}
\omega = \frac{\alpha \rho}{k},\quad
A = \alpha^2 \beta,\quad
B = \rho^2 \sigma.
\end{equation*}
If necessary changing $u \to - u$ and/or $v \to -v$, we can assume
$\omega>0$, and choose $d = \rho^{-1} \omega^{1/2}$.
It is easy to check that these matrices $F^{-1} \pd{u} F$ and
$F^{-1} \pd{v} F$ coincide with \eqref{PDE:extended-UV}.
Thus $D_0{}^{-1} F$, where $D_0 = D|_{(u, v)=(0, 0)}$,
satisfies $D_0{}^{-1} F \left(0,0,\lambda\right) = \id$,
and hence is the extended frame of some indefinite affine sphere.
\end{proof}

\subsection{Indefinite improper affine spheres}

When $H = 0$,
there exists an integral formula in terms of four
functions of one variable,
which is known as the \textit{Blaschke representation}.
We first show a fundamental lemma.
\begin{Lemma}\label{lem:solution}
Let $H=0$.
Then the pair of solutions $\left(F_+, G_-\right)$ to the system
\eqref{ODE:F+andG-} and \eqref{eq:pot} with the initial condition
$F_+ \left(0, \lambda\right)= G_- \left(0, \lambda\right) = \id$ is
explicitly given by
\begin{equation}\label{eq:ESol}
F_+ =
\begin{bmatrix}
1 & 0 & 0\\
\lambda b & 1 & 0\\
\lambda^2 c & \lambda a & 1
\end{bmatrix},\quad
G_- =
\begin{bmatrix}
1 & \lambda^{-1} s & 0\\
0 & 1  & 0\\
\lambda^{-1} r  & \lambda^{-2} t & 1
\end{bmatrix},
\end{equation}
where $a$, $b$, $c$, $r$, $s$, $t$ are defined as
\begin{gather}
a \left(u\right) = \int_{0}^u \alpha(k)\, d k, \quad
b \left(u\right) = \int_0^u \beta(k) \, d k, \quad
c \left(u\right) = \int_{0}^u a(k) \beta(k)\, dk,\label{def:abc}\\
r \left(v\right) = \int_0^v \rho (k) \, d k,\quad
s \left(v\right) = \int_0^v \sigma (k) \, d k,\quad
t \left(v\right) = \int_{0}^v r(k) \sigma(k)\, dk.\label{def:rst}
\end{gather}
Moreover,
if $1 - b s\neq 0$,
then $V_+ \in \LSLPN$ and $V_- \in \LSLN$,
defined by the Birkhoff decomposition of $G_-^{-1} F_+$ as \eqref{def:V+V-},
are given as
\begin{align}
V_+ &=\label{eq:VPM:P}
\begin{bmatrix}
1 & 0 & 0\\
\lambda b \left(1 - b s\right)^{-1} & 1 & 0\\
\lambda^2 c \left(1 - b s\right)^{-1} &
\lambda \left(a \left(1-bs\right) + c s\right) & 1
\end{bmatrix},\\
V_- &=\label{eq:VPM:M}
\begin{bmatrix}
\left(1 - b s\right)^{-1} & \lambda^{-1} s & 0\\
0 & 1 - b s & 0\\
\lambda^{-1} \big(r + b t \left(1 - b s\right)^{-1}\big) &
\lambda^{-2} t & 1
\end{bmatrix}.
\end{align}
\end{Lemma}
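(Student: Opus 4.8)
The plan is to treat the two assertions separately: first confirm the closed forms for $F_+$ and $G_-$, and then verify the Birkhoff factorization of $G_-^{-1} F_+$.

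For the first assertion, note that \eqref{eq:ODEs} with the prescribed initial condition is a linear ordinary differential equation, so its solution is unique; hence it suffices to check that the matrices in \eqref{eq:ESol} solve it. Setting $H=0$ in \eqref{eq:pot}, I would differentiate the proposed $F_+$ in $u$ and the proposed $G_-$ in $v$ and compare with $F_+ \xi_+$ and $G_- \xi_-$, using the defining relations $a'=\alpha$, $b'=\beta$, $c'=a\beta$ from \eqref{def:abc} and $r'=\rho$, $s'=\sigma$, $t'=r\sigma$ from \eqref{def:rst}. The special shapes of the two factors arrange for each nonzero entry of the derivative to reproduce exactly one of these primitives, and the initial condition $F_+(0,\lambda)=G_-(0,\lambda)=\id$ holds because all six integrals vanish at the base point. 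This step is entirely routine.

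For the second assertion, the key simplification is that the strictly off-diagonal part $N$ of $G_-$ is nilpotent with $N^3=0$, so $G_-^{-1}=\id-N+N^2$ can be written down explicitly; only the $(3,2)$ entry picks up the quadratic correction $\lambda^{-2} rs$ from $N^2$. I would then form $G_-^{-1}F_+$ and, writing $w=1-bs$, verify the claimed factorization by checking $(G_-^{-1}F_+)\,V_- = V_+$ entrywise, rather than by inverting $V_-$. Since $V_+$ in \eqref{eq:VPM:P} visibly lies in $\LSLPN$ (constant term $\id$, only nonnegative powers of $\lambda$) and $V_-$ in \eqref{eq:VPM:M} lies in $\LSLN$ (only nonpositive powers of $\lambda$), the uniqueness in the Birkhoff decomposition of Theorem \ref{thm:Birkhoff} then identifies them as the genuine factors. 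The hypothesis $1-bs\neq 0$ is precisely the big-cell condition: $w$ is exactly the scalar that must be inverted to produce $w^{-1}$, so the decomposition exists if and only if $w\neq 0$.

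The main obstacle is the bookkeeping in the bottom row of $(G_-^{-1}F_+)V_-$. The $\lambda^2$ and $\lambda^1$ parts of the $(3,1)$ and $(3,2)$ entries match the corresponding entries of $V_+$ immediately, but the residual negative-power pieces must cancel completely. In the $(3,1)$ entry the $\lambda^{-1}$ terms collapse through $w^{-1}r(bs-1)+r = -r+r = 0$ once the two $\pm btw^{-1}$ contributions cancel; in the $(3,2)$ entry the $\lambda^{-2}$ terms vanish only after substituting $w=1-bs$ and using commutativity of the scalar primitives, whereupon $brs^2$, $bst$, $rs$, $t$ all cancel in pairs. Tracking these cancellations, together with the quadratic correction from $N^2$, is the only delicate point; everything else reduces to elementary matrix multiplication.
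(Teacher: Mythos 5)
Your proposal is correct and takes essentially the same route as the paper: verify by direct differentiation that the explicit matrices solve the ODE system with the given initial condition, then confirm the factorization identity and invoke the uniqueness part of the Birkhoff decomposition — your check $\left(G_-^{-1}F_+\right)V_- = V_+$ is the same identity as the paper's $F_+V_- = G_-V_+$ after left-multiplication by $G_-$, so explicitly inverting $G_-$ via nilpotency is an extra (harmless) step rather than a different method. The bottom-row cancellations you describe are exactly the ones that occur, so the verification goes through as planned.
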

\begin{proof}
It is easy to check that $F_+$, $G_-$
in \eqref{eq:ESol} satisfy \eqref{ODE:F+andG-} and \eqref{eq:pot}
with $H=0$,
and $F_+ \left(0, \lambda\right) = G_- \left(0, \lambda\right) = \id$.
The loops $V_+$, $V_-$ in \eqref{eq:VPM:P}, \eqref{eq:VPM:M} clearly
belong to $\LSLPN$, $\LSLN$ respectively.
Because $F_+ V_- = G_- V_+$,
the decomposition \eqref{def:V+V-} holds.
\end{proof}
The condition $1- b s \neq 0$ in Lemma \ref{lem:solution}
means that $G_-^{-1} F_+$ belongs to the big cell
of the Birkhoff decomposition.
\begin{Theorem}
Let $\alpha$, $\beta$, $\rho$, $\sigma$ be functions in one variable,
and define $a$, $b$, $c$, $r$, $s$, $t$ by
\eqref{def:abc} and \eqref{def:rst}.
Let $F_+$, $G_-$, $V_+$, $V_-$ be the loops
given by \eqref{eq:ESol}, \eqref{eq:VPM:P}, \eqref{eq:VPM:M},
and define $\hat{F}$ by $\hat{F} = F_+ V_- = G_- V_+$.
We assume that $\alpha$, $\rho$, $1 - b s$ have no zeros.
Then there exists a diagonal matrix
$D = \diag \left(d, d^{-1}, 1\right)$ with
some function $d$ such that
$D_0{}^{-1} \hat{F} D$,
where $D_0 = D|_{(u, v) = (0, 0)}$,
is the extended frame of some indefinite improper affine sphere $f$.
The data solving the integrability condition
\eqref{tzitzeica-liouville}--\eqref{tzitzeica-liouville:aux}
with $H = 0$ are given as
\begin{equation}\label{explicit:omegaAB}
\omega = \left(1 - b s\right) \alpha \rho,\quad
A = \alpha^2 \beta,\quad
B = \rho^2 \sigma.
\end{equation}
Moreover, the associated family of $f$ is given by
the representation formula
\begin{equation}\label{formula:representation-improper}
f^\lambda =
\begin{bmatrix}
\lambda a + \lambda^{-2} \left(r s - t\right)\\
\lambda^2 \left(a b - c\right) + \lambda^{-1} r\\
a r - \left(a b - c\right) \left(r s - t\right)
+ \lambda^3 \int_0^u \alpha (k) c (k)\, dk
+ \lambda^{-3} \int_0^v \rho (k) t (k)\, dk
\end{bmatrix},
\end{equation}
where $\lambda \in \R^\times$.
All indefinite improper affine spheres are locally constructed in this way.
\end{Theorem}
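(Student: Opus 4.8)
The plan is to read the statement as the $H=0$ specialization of Theorem~\ref{thm:Rep}, made completely explicit by Lemma~\ref{lem:solution}. With that identification the existence of $D$, the fact that $D_0{}^{-1}\hat F D$ is an extended frame, and the formulas \eqref{explicit:omegaAB} for $\omega,A,B$ are immediate; the genuinely new content is the closed-form position vector \eqref{formula:representation-improper}, which I would obtain by writing out the frame explicitly and integrating.

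First I would check the hypotheses of the converse half of Theorem~\ref{thm:Rep}. By Lemma~\ref{lem:solution} the loops $F_+,G_-$ of \eqref{eq:ESol} solve \eqref{eq:ODEs} for the potentials \eqref{eq:pot} with $H=0$ and satisfy $F_+(0,\lambda)=G_-(0,\lambda)=\id$, while the assumption that $1-bs$ has no zeros is precisely the condition that $G_-^{-1}F_+$ lie in the big cell, so the Birkhoff splitting \eqref{def:V+V-} produces the $V_\pm$ of \eqref{eq:VPM:P}--\eqref{eq:VPM:M}. Theorem~\ref{thm:Rep} then supplies the gauge $D=\diag(d,d^{-1},1)$ with $d=\rho^{-1}\omega^{1/2}$ and makes $D_0{}^{-1}\hat F D$ an extended frame with $D_0{}^{-1}\hat F D|_{(0,0)}=\id$ (since $\hat F(0,0)=\id$). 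For the data I would read off $A=\alpha^2\beta$, $B=\rho^2\sigma$ from the cubic differential of Theorem~\ref{thm:Rep}, while $\omega=\alpha\rho/k$ where $k$ is the leading coefficient of $V_-$; as \eqref{eq:VPM:M} has leading term $\diag\bigl((1-bs)^{-1},1-bs,1\bigr)$, i.e.\ $k=(1-bs)^{-1}$, this gives $\omega=(1-bs)\alpha\rho$, which is \eqref{explicit:omegaAB}.

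For the representation formula I would multiply out $\hat F=F_+V_-$ explicitly and form $\hat F D$. Because $H=0$ the third column is the constant affine normal ${}^{\mathrm{t}}[0,0,1]$, confirming improperness, and by \eqref{def:extended} the first column of the frame equals $\lambda^{-1}\omega^{-1/2}\pd{u}f^\lambda$ and the second equals $\lambda\omega^{-1/2}\pd{v}f^\lambda$, so $\pd{u}f^\lambda$ and $\pd{v}f^\lambda$ are recovered after multiplying the first and second columns by $\lambda\omega^{1/2}$ and $\lambda^{-1}\omega^{1/2}$ respectively. The two algebraic identities $\omega^{1/2}d=(1-bs)\alpha$ and $\omega^{1/2}d^{-1}=\rho$, which follow at once from $\omega=(1-bs)\alpha\rho$ and $d=\rho^{-1}\omega^{1/2}$, clear the square roots and turn both columns into Laurent polynomials in $\lambda$ whose coefficients are the quantities $a,b,c,r,s,t$ of \eqref{def:abc}--\eqref{def:rst} and their integrands; integrating from the base point, with path-independence guaranteed because $\hat F D$ is a genuine frame so that $\pd{u}\pd{v}f^\lambda=\pd{v}\pd{u}f^\lambda$, and normalizing $f^\lambda(0,0)=0$, reproduces \eqref{formula:representation-improper} term by term. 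One point worth flagging is that the clean formula is most directly obtained from $\hat F D$ rather than the normalized $D_0{}^{-1}\hat F D$; the two differ by the constant left factor $D_0\in\SL$, i.e.\ an equiaffine transformation, under which affine spheres are identified, so this is harmless.

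Finally, the claim that every indefinite improper affine sphere arises this way is the first half of Theorem~\ref{thm:Rep}: the extended frame of any such sphere has normalized potentials of the form \eqref{eq:pot} with $H=0$, which prescribe $\alpha,\beta,\rho,\sigma$ (with $\alpha,\rho$ nonvanishing), and near the base point $1-bs\neq 0$ automatically, so Lemma~\ref{lem:solution} reconstructs $F_+,G_-,V_\pm$ and hence $f$. I expect the main obstacle to be purely computational: carrying the $3\times3$ products and the gauge through without error and verifying that the two partial derivatives integrate consistently to the single closed expression \eqref{formula:representation-improper}. The factor $\omega^{1/2}$ is where a sign or normalization slip is most likely, which is exactly why isolating the identities $\omega^{1/2}d=(1-bs)\alpha$ and $\omega^{1/2}d^{-1}=\rho$ at the outset is the decisive simplification.
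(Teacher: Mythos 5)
Your proposal is correct and follows essentially the same route as the paper: the paper likewise obtains the data $\omega=(1-bs)\alpha\rho$, $A=\alpha^2\beta$, $B=\rho^2\sigma$ and the gauge $d=\rho^{-1}\omega^{1/2}$ (it re-derives the Maurer--Cartan forms of $\hat F$ explicitly rather than citing Theorem \ref{thm:Rep}, but this is the same computation), and then, exactly as you propose, passes to the moving frame via the second gauge $\tilde D=\diag\left(\lambda\,\omega^{1/2},\lambda^{-1}\omega^{1/2},1\right)$, writes it out explicitly, and integrates the first and second columns to obtain \eqref{formula:representation-improper}. The only cosmetic differences are that the paper evaluates the frame as $G_-V_+D\tilde D$ instead of $F_+V_-D\tilde D$ and handles the sign of $\omega$ by the substitution $u\to-u$, $v\to-v$, which your sketch leaves implicit.
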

\begin{proof}
First a straightforward computation shows that
Maurer-Cartan form of $\hat{F}$ is given by
\begin{align*}
{\hat{F}}^{-1} \pd{u}\hat{F}
&= V_+^{- 1} \pd{u} V_+
=
\begin{bmatrix}
0 & 0 & 0\\
\lambda \beta \left(1 - b s\right)^{-2} & 0 & 0\\
0 & \lambda \alpha \left(1 - b s\right) & 0
\end{bmatrix}
= \hat{U},\\
{\hat{F}}^{-1} \pd{v}\hat{F}
&= V_-^{- 1} \pd{v} V_-
=
\begin{bmatrix}
b \sigma \left(1 - b s\right)^{-1} & \lambda^{-1} \sigma & 0\\
0 & - b \sigma \left(1 - b s\right)^{-1} & 0\\
\lambda^{-1} \rho & 0 & 0
\end{bmatrix}
= \hat{V}.
\end{align*}
Next we take a diagonal gauge $D = \diag \left(d, d^{-1}, 1\right)$.
Then the Maurer-Cartan form of $F= \hat{F} D$ is given by
\begin{align*}
F^{-1} \partial_u F
&= D^{-1} \big(\hat{U} D + \pd{u} D\big)
=
\begin{bmatrix}
\pd{u} \log d & 0 & 0\\
\lambda \beta d^2 \left(1 - b s\right)^{-2} & - \pd{u} \log d & 0\\
0 & \lambda \alpha d^{-1} \left(1 - b s\right) & 0
\end{bmatrix},\\
F^{-1} \partial_v F
&= D^{-1} \big(\hat{V} D + \pd{v} D\big)
=
\begin{bmatrix}
b \sigma \left(1 - b s\right)^{-1} + \pd{v} \log d &
\lambda^{-1} \sigma d^{-2} & 0\\
0 & - b \sigma \left(1 - b s\right)^{-1} - \pd{v} \log d & 0\\
\lambda^{-1} \rho d & 0 & 0
\end{bmatrix}.
\end{align*}
If necessary, changing $u \to - u$ and/or $v \to -v$, we can assume
$\left(1-bs\right) \alpha \rho>0$.
 Then setting \eqref{explicit:omegaAB} and
\begin{equation*}
d = \frac{\sqrt{\left(1 - b s\right) \alpha \rho\,}}{\rho}
= \frac{\omega^{1/2}}{\rho},
\end{equation*}
this system accords with \eqref{PDE:extended}--\eqref{PDE:extended-UV}.
To obtain the representation formula \eqref{formula:representation-improper},
we consider an another diagonal gauge $\tilde{D}= \diag
\big(\lambda\, \omega^{1/2},\, \lambda^{-1} \omega^{1/2},\, 1\big)$ as
introduced in \eqref{def:extended}.
Therefore $\tilde{F} = F \tilde{D}$ satisfies that
\begin{equation*}
\tilde{F}^{-1} \partial_u \tilde{F} =
\begin{bmatrix}
\pd{u} \log \omega &0  &0\\
\lambda^3 A\, \omega^{-1} & 0 & 0\\
0 & \omega & 0
\end{bmatrix},\quad
\tilde{F}^{-1} \partial_v \tilde{F} =
\begin{bmatrix}
0 & \lambda^{-3} B \omega^{-1} &0\\
0 & \pd{v} \log \omega & 0\\
\omega & 0&0
\end{bmatrix}.
\end{equation*}
Thus $\tilde F$ is a family of moving frames of
indefinite improper affine spheres.
The moving frame $\tilde F$ can be computed explicitly as
\begin{equation*}
\tilde F
= G_- V_+ D \tilde{D}
=
\begin{bmatrix}
\lambda \alpha & \lambda^{-2} \rho s & 0\\
\lambda^{2} \alpha b & \lambda^{-1} \rho & 0\\
\left(\lambda^{3} c + b t + r \left(1 - b s\right)\right) \alpha &
\left(\lambda^{-3} t + c s + a \left(1 - b s\right)\right) \rho & 1
\end{bmatrix}.
\end{equation*}
Since the moving frame is defined by $\tilde F
= \left[\partial_u f^\lambda, \partial_v f^\lambda, \xi_0\right]$,
we integrate the first column of $\tilde{F}$ by $u$ and have
\begin{equation*}
f^\lambda =
\begin{bmatrix}
\lambda a\\
\lambda^2 \left(a b - c\right)\\
a r
- \left(r s - t\right) \left(a b - c\right)
+ \lambda^3 \int_0^u \alpha (k) c (k)\, dk
\end{bmatrix}
+
\begin{bmatrix}
x\\
y\\
z
\end{bmatrix},
\end{equation*}
where $x$, $y$, $z$ are some functions in $v$.
Therefore from the second column of $\tilde{F}$,
we have
\begin{equation*}
\begin{bmatrix}
\lambda^{-2} \rho s\\
\lambda^{-1} \rho\\
\left(\lambda^{-3} t + c s + a \left(1 - b s\right)\right) \rho
\end{bmatrix}
= \pd{v} f^\lambda
=
\begin{bmatrix}
x'\\
y'\\
a r' - \left(r s - t\right)' \left(a b - c\right) + z'
\end{bmatrix}.
\end{equation*}
Therefore
\begin{equation*}
x = \lambda^{-2} \left(r s - t\right),\quad
y = \lambda^{-1} r,\quad
z = \lambda^{-3} \int^v_0 \rho (k) t (k)\, dk,
\end{equation*}
which shows \eqref{formula:representation-improper}.
\end{proof}
\begin{Remark}\label{rem:generalsol-liouville}
For given functions $A$ and $B$,
it is known that a general solution to
the Liouville equation \eqref{tzitzeica-liouville}
with $H = 0$ is represented as
\begin{equation}\label{eq:solution}
\omega \left(u, v\right)
= \left(\int_0^u \phi\left(k\right) dk
- \int_0^v \psi\left(k\right) dk\right)
\left(- \frac{A \left(u\right) B \left(v\right)}%
{\phi \left(u\right) \psi \left(v\right)}\right)^{1/2},
\end{equation}
where $\phi$ and $\psi$ are arbitrary functions
with no zeros in one variable.
\end{Remark}
\begin{Corollary}[Representation formula]\label{cor:repformula}
Let $\gamma_1$, $\gamma_2$ be plane curves
defined on intervals $I_1$, $I_2$ respectively.
Assume that both the intervals contain $0$.
Then the map
\begin{equation}\label{formula:improper-AF-from-planecurves}
f \left(u, v\right)
=
\begin{bmatrix}
\gamma_1 \left(u\right) + \gamma_2 \left(v\right)\\
z \left(u, v\right)
\end{bmatrix},
\end{equation}
where the height $z$ is defined by
\begin{equation}\label{formula:improper-AF-from-planecurves:z}
z \left(u, v\right)
= \det \left[\gamma_1 \left(u\right), \gamma_2 \left(v\right)\right]
+ \int_0^u \det
\left[\gamma_1 \left(k\right), \gamma_1^\prime \left(k\right)\right] dk
- \int_0^v \det
\left[\gamma_2 \left(k\right), \gamma_2^\prime \left(k\right)\right] dk,
\end{equation}
is an indefinite improper affine sphere
with the affine normal ${}^\mathrm{t} \left[0, 0, 1\right]$,
which is parametrized by the asymptotic coordinates
$\left(u, v\right) \in \D = I_1 \times I_2$.
Its affine metric $h = 2 \omega\, du dv$ and
cubic form $C = A\, du^3 + B\, dv^3$ are given by
\begin{equation*}
\omega = \det \left[\gamma_1' \left(u\right),
\gamma_2' \left(v\right)\right],\quad
A = \det \left[\gamma_1' \left(u\right),
\gamma_1'' \left(u\right)\right],\quad
B = \det \left[\gamma_2'' \left(v\right),
\gamma_2' \left(v\right)\right].
\end{equation*}
The singular set of $f$ is
$S = \left\{\left(u, v\right) \in \D \;\big|\;
\det \left[\gamma_1' \left(u\right),
\gamma_2' \left(v\right)\right] = 0\right\}$.
Moreover the associated family of $f$ is given by the transformation
\begin{equation*}
\gamma_1 \mapsto
\begin{bmatrix}
\lambda & 0\\
0 & \lambda^2
\end{bmatrix} \gamma_1,\quad
\gamma_2 \mapsto
\begin{bmatrix}
\lambda^{-2} & 0\\
0 & \lambda^{-1}
\end{bmatrix} \gamma_2
\end{equation*}
where $\lambda \in \R^\times$.
Conversely all indefinite improper affine spheres can be locally
constructed in this way.
\end{Corollary}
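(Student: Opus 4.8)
The plan is to read this Corollary as a geometric repackaging of the representation formula \eqref{formula:representation-improper}, under the dictionary
\[
\gamma_1(u) = {}^{\mathrm t}\!\left[a,\, ab-c\right],\qquad
\gamma_2(v) = {}^{\mathrm t}\!\left[rs-t,\, r\right],
\]
with $a,b,c,r,s,t$ from \eqref{def:abc}--\eqref{def:rst}. Differentiating with $a'=\alpha$, $b'=\beta$, $c'=a\beta$ gives $\gamma_1' = \alpha\,{}^{\mathrm t}\!\left[1,b\right]$ and $\gamma_2' = \rho\,{}^{\mathrm t}\!\left[s,1\right]$, so at $\lambda=1$ the planar part of $f^\lambda$ in \eqref{formula:representation-improper} is exactly $\gamma_1(u)+\gamma_2(v)$ and its third entry reproduces $z$ in \eqref{formula:improper-AF-from-planecurves:z}. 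This dictionary, however, only produces curves with $\left(\gamma_1\right)_1' = \alpha \neq 0$, whereas the Corollary admits arbitrary $\gamma_1,\gamma_2$; I would therefore prove the construction direction directly and reserve \eqref{formula:representation-improper} for the converse.

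For the construction direction I set $\xi_0 = {}^{\mathrm t}\!\left[0,0,1\right]$ and form the candidate frame $\tilde F = \left[\pd{u}f,\pd{v}f,\xi_0\right]$ for $f$ as in \eqref{formula:improper-AF-from-planecurves}. Since the planar part of $f$ splits as $\gamma_1(u)+\gamma_2(v)$, the vector $\pd{u}\pd{v}f$ has vanishing planar part, and computing $\pd{u}\pd{v}z$ from \eqref{formula:improper-AF-from-planecurves:z} gives $\pd{u}\pd{v}f = \det\!\left[\gamma_1',\gamma_2'\right]\xi_0$; expanding $\det\tilde F$ along its third column gives $\det\tilde F = \det\!\left[\gamma_1',\gamma_2'\right] = \omega$. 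It then remains to check that $\pd{u}^2 f$ and $\pd{v}^2 f$ are tangential, so that $(u,v)$ are asymptotic, with the coefficients prescribed by \eqref{eq:FUVtilde} at $H=0$. Expanding $\gamma_1''$ in the planar basis $\{\gamma_1',\gamma_2'\}$ yields the planar coefficients $\pd{u}\log\omega$ and $A\omega^{-1}$ at once, since $\pd{u}\omega = \det\!\left[\gamma_1'',\gamma_2'\right]$ and $\det\!\left[\gamma_1',\gamma_1''\right]=A$; the remaining third components, i.e.\ the vanishing of $h_{11}$ and $h_{22}$, reduce to the scalar identity $\pd{u}^2 z = \left(\pd{u}\log\omega\right)\pd{u}z + A\omega^{-1}\pd{v}z$ and its $v$-analogue. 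I expect these to be the main obstacle, and I would dispatch them by the Plücker (Cramer) relation $\det\!\left[b,c\right]\det\!\left[a,w\right]-\det\!\left[a,c\right]\det\!\left[b,w\right]+\det\!\left[a,b\right]\det\!\left[c,w\right]=0$ for four vectors in $\R^2$, applied twice, once to $\left(\gamma_1',\gamma_1'',\gamma_2',\gamma_2\right)$ and once to $\left(\gamma_1,\gamma_1',\gamma_1'',\gamma_2'\right)$. With these in hand $\tilde F$ solves \eqref{eq:FUVtilde} at $H=0$; the volume normalization \eqref{volumecondition} holds because $\left(\det\tilde F\right)^2 = \omega^2 = \left|h_{11}h_{22}-h_{12}^2\right|$ and $\xi_0$ is constant, so $\xi_0$ is the affine normal, $f$ is an indefinite improper affine sphere, and the data $\left(\omega,A,B\right)$ come out as claimed, with singular set the zero locus of $\omega=\det\!\left[\gamma_1',\gamma_2'\right]$.

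Finally, for the associated family I would read off from \eqref{formula:representation-improper} that passing to $f^\lambda$ replaces the curves by $\diag\!\left(\lambda,\lambda^2\right)\gamma_1$ and $\diag\!\left(\lambda^{-2},\lambda^{-1}\right)\gamma_2$; a one-line check shows that $\det\diag\!\left(\lambda,\lambda^2\right)=\lambda^3$ and $\det\diag\!\left(\lambda^{-2},\lambda^{-1}\right)=\lambda^{-3}$ scale $A\mapsto\lambda^3 A$, $B\mapsto\lambda^{-3}B$, while the mixed scaling leaves $\omega=\det\!\left[\gamma_1',\gamma_2'\right]$ invariant, matching the associated family of Subsection \ref{subsec:prelim}. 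For the converse, the preceding theorem shows every indefinite improper affine sphere is locally given by \eqref{formula:representation-improper} for some normalized potential $\left(\alpha,\beta,\rho,\sigma\right)$ with $\alpha,\rho$ nonvanishing; applying the dictionary above then exhibits it in the form \eqref{formula:improper-AF-from-planecurves}--\eqref{formula:improper-AF-from-planecurves:z}, which completes the proof.
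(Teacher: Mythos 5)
Your proposal is correct, and its construction half takes a genuinely different route from the paper. The paper proves the whole corollary by repackaging the representation formula \eqref{formula:representation-improper}: exactly as in your converse step, it introduces $p=ab-c$, $q=rs-t$, uses the identities $\alpha c = ap'-a'p$ and $\rho t = q'r-qr'$, and then absorbs constants $\left(a_0,p_0,q_0,r_0\right)$ by an explicit equiaffine (shear plus translation) transformation, finally declaring $\gamma_1={}^\mathrm{t}\left[\tilde a,\tilde p\right]$, $\gamma_2={}^\mathrm{t}\left[\tilde q,\tilde r\right]$ to be arbitrary. As you observe, they are not quite arbitrary: the dictionary only reaches curves with $\tilde a'=\alpha\neq 0$ and $\tilde r'=\rho\neq 0$ (in fact it also forces $\tilde p'\left(0\right)=\tilde q'\left(0\right)=0$, since $b\left(0\right)=s\left(0\right)=0$), and the circle of Example \ref{example:circle,square,cpt} already violates these conditions; closing this in the paper's spirit would require an additional $\mathrm{SL}_2\R$-equivariance argument that the paper does not give. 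Your direct verification of the forward direction avoids the issue entirely and covers arbitrary curves. I checked the step you flag as the main obstacle: writing $\omega=\det\left[\gamma_1',\gamma_2'\right]$ and using $\pd{u}^2 z=\det\left[\gamma_1'',\gamma_2\right]+\det\left[\gamma_1,\gamma_1''\right]$, your two Pl\"ucker applications, to $\left(\gamma_1',\gamma_1'',\gamma_2',\gamma_2\right)$ and to $\left(\gamma_1,\gamma_1',\gamma_1'',\gamma_2'\right)$, sum exactly to
\begin{equation*}
\omega\,\pd{u}^2 z=\det\left[\gamma_1'',\gamma_2'\right]\pd{u}z+\det\left[\gamma_1',\gamma_1''\right]\pd{v}z,
\end{equation*}
and symmetrically in $v$; together with $\pd{u}\pd{v}f=\omega\,\xi_0$, $\det\tilde F=\omega$ and the constancy of $\xi_0$, this yields the system \eqref{eq:FUVtilde} with $H=0$, the volume condition \eqref{volumecondition}, and hence the statement on $\D\setminus S$ with the claimed data. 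Your associated-family and converse steps coincide with the paper's. In sum, the paper's argument is shorter and stays entirely inside the representation-formula framework, while yours costs one determinant identity but actually establishes the forward direction for genuinely arbitrary plane curves.
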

\begin{proof}
First, introducing functions $p=ab-c$ and $q= rs -t$,
we rephrase \eqref{formula:representation-improper} as
\begin{equation}\label{formula:representation-improper-2}
f^\lambda =
\begin{bmatrix}
\lambda a + \lambda^{-2} q \\
\lambda^2 p + \lambda^{-1} r\\
a r - p q
+ \lambda^3 \int_0^u \left(a p' - a' p\right) dk
- \lambda^{-3} \int_0^v \left(q r' - q' r\right) dk
\end{bmatrix},
\end{equation}
where we use the identities
\begin{align*}
\alpha c & = a' c + a \left(a b' - c'\right)\\
 & = a^2 b' - ac' + a' c\\
 & = a p' - a' p,
\end{align*}
and $\rho t = q' r-q r'$.
We note that $a\left(0\right)
= p\left(0\right)
= q\left(0\right)
= r\left(0\right) =0$.
We then consider an equiaffine transformation of $f^{\lambda}$
as
\begin{equation*}
\tilde f^{\lambda}=
 \begin{bmatrix}
 1 & 0 & 0 \\
 0 & 1 & 0 \\
\lambda^{-1} r_0 - \lambda^2 p_0 & \lambda a_0 - \lambda^{-2} q_0 & 1
 \end{bmatrix} f^{\lambda}+
\begin{bmatrix}
 \lambda a_0 + \lambda^{-2} q_0 \\
 \lambda^2 p_0 + \lambda^{-1} r_0 \\
 a_0 r_0 - p_0 q_0
\end{bmatrix},
\end{equation*}
where $a_0, r_0, p_0$ and $q_0$ are some constants.
A straightforward computation shows that
\begin{equation*}
\tilde f^\lambda =
\begin{bmatrix}
\lambda \tilde a + \lambda^{-2} \tilde q \\
\lambda^2 \tilde p + \lambda^{-1} \tilde r\\
\tilde a \tilde r - \tilde p \tilde q
+ \lambda^3 \int_0^u
\left(\tilde a \tilde p' - \tilde a' \tilde p\right) dk
- \lambda^{-3} \int_0^v
\left(\tilde q \tilde r' - \tilde q' \tilde r\right) dk
\end{bmatrix},
\end{equation*}
where $\tilde a = a+ a_0$,
$\tilde p = p + p_0$,
$\tilde q = q + q_0$ and $\tilde r = r + r_0$.
Thus we obtain \eqref{formula:improper-AF-from-planecurves}
on writing
\begin{equation*}
\gamma_1 \left(u\right) =
\begin{bmatrix}
\tilde a \left(u\right)\\
\tilde p \left(u\right)
\end{bmatrix},\quad
\gamma_2 \left(v\right) =
\begin{bmatrix}
\tilde q \left(v\right)\\
\tilde r \left(v\right)
\end{bmatrix}.
\end{equation*}
Since $\gamma_1$ and $\gamma_2$ are arbitrary,
\eqref{formula:improper-AF-from-planecurves}
gives the all improper indefinite affine spheres.
\end{proof}
The formula \eqref{formula:representation-improper-2} is
exactly the same that is represented in \cite[p.~216]{blaschkebook2}.
In contrast to the Blaschke's proof which utilized the Lelieuvre's formula,
our proof is based on the decomposition of the extended frame.
\begin{Remark}
The representation formula
\eqref{formula:improper-AF-from-planecurves} is also formulated
in \cite{MR2845277} with their concern in computer vision.
They have given a geometric interpretation of the height function
\eqref{formula:improper-AF-from-planecurves:z} as follows.
Consider the curves $2 \gamma_1$ and $2 \gamma_2$,
and fix two points $2 \gamma_1 \left(u\right)$
and $2 \gamma_2 \left(v\right)$ arbitrarily.
We assume both $u$ and $v$ are positive for simplicity,
and denote by $\Omega$
the region enclosed by the union of four curves
\begin{align*}
C_1\colon &
\left[0, v\right] \ni k \mapsto 2 \gamma_2 \left(k\right),\\
C_2\colon &
\left[0, 1\right] \ni k \mapsto 2 \gamma_2 \left(v\right)
+ k \left(2 \gamma_1 \left(u\right) - 2 \gamma_2 \left(v\right)\right),\\
C_3\colon &
\left[0, u\right] \ni k \mapsto 2 \gamma_1 \left(- k + u\right),\\
C_4\colon &
\left[0, 1\right] \ni k \mapsto 2 \gamma_1 \left(0\right)
+ k \left(2 \gamma_2 \left(0\right) - 2 \gamma_1 \left(0\right)\right).
\end{align*}
Then the value $\left|z \left(u, v\right)\right|$ gives
the area of the region $\Omega$.
We will again mention this fact in a simplified case,
see Example \ref{example:circle,square,cpt}.
\end{Remark}
We illustrate some examples of indefinite improper affine spheres
by using the representation formula
\eqref{formula:improper-AF-from-planecurves}.
The resulting surfaces usually have singularities,
and hence are sometimes called \textit{indefinite improper affine maps},
which were introduced in \cite{MR2546487}
for non-convex improper affine surfaces
as an analogue of convex ones \cite{MR2126213}.
\begin{Example}
Let $P$ and $R$ be smooth functions in one variable.
We substitute graphs
\begin{equation*}
\gamma_1 \left(u\right) =
\begin{bmatrix}
u\\
P' \left(u\right)
\end{bmatrix},\quad
\gamma_2 \left(v\right) =
\begin{bmatrix}
R' \left(v\right)\\
v
\end{bmatrix}
\end{equation*}
into the representation formula \eqref{formula:improper-AF-from-planecurves},
and have an indefinite improper affine sphere
\begin{equation}\label{formula:improper-AF-from-planecurves-graph}
f \left(u, v\right) =
\begin{bmatrix}
u + R' \left(v\right)\\
v + P' \left(u\right)\\
\left(u + R' \left(v\right)\right)
\left(v + P' \left(u\right)\right)
- 2 \left(P \left(u\right) + R \left(v\right)
+ P' \left(u\right) R' \left(v\right)\right)
\end{bmatrix}.
\end{equation}
Its data is $\omega =
1 - P'' \left(u\right) R'' \left(v\right)$,
$A = P''' \left(u\right)$,
$B = R''' \left(v\right)$.
It describes a subclass of indefinite improper affine spheres that
may have singularities.
In view of singularity theory
it is known that a cuspidal cross cap,
which is one of the typical singularities
as well as cuspidal edges or swallowtails,
never appear on indefinite improper affine spheres.
See \cite{MR2546487} and \cite{MR2215151} for details.

Especially we set $R = 0$ so that
we have a smooth indefinite improper affine sphere
\begin{equation}\label{Nomizu-Sasaki:Thm5.1}
f \left(u, v\right) =
\begin{bmatrix}
u\\
w\\
u w - 2 P \left(u\right)
\end{bmatrix},
\end{equation}
where $w = v + P' \left(u\right)$.
Further,
the most simplest choice $P = 0$ gives the hyperbolic paraboloid,
or the choice $P \left(u\right) = \left(1/6\right) u^3$
gives the Cayley surface.
It is known that if the affine metric of
an indefinite improper affine sphere is flat
then it is locally of the form \eqref{Nomizu-Sasaki:Thm5.1}.
\end{Example}
\begin{Example}\label{example:circle,square,cpt}
If $\gamma_1$ is the same as $\gamma_2$,
we write them as $\gamma$,
the formula \eqref{formula:improper-AF-from-planecurves} becomes
\begin{gather}
f \left(u, v\right)
=\label{formula:improper-AF-from-one-planecurve}
\begin{bmatrix}
\gamma \left(u\right) + \gamma \left(v\right)\\
z \left(u, v\right)
\end{bmatrix},\\
z \left(u, v\right)
=\label{formula:improper-AF-from-one-planecurve:z}
\det \left[\gamma \left(u\right), \gamma \left(v\right)\right]
+ \int_v^u
\det \left[\gamma \left(k\right), \gamma' \left(k\right)\right] dk.
\end{gather}
It has the data
\begin{equation*}
\omega = \left|\det \left[\gamma' \left(u\right),
\gamma' \left(v\right)\right]\right|,\quad
A = \det \left[\gamma' \left(u\right),
\gamma'' \left(u\right)\right],\quad
B = - \det \left[\gamma' \left(v\right),
\gamma'' \left(v\right)\right].
\end{equation*}
A geometric interpretation of the function
\eqref{formula:improper-AF-from-one-planecurve:z} is given
in \cite{MR2486030},
which is called the \textit{inner area distance} in their language.
Here we briefly explain what it is.
Consider the curve $2 \gamma$,
and fix two points $G_0$ and $G_1$ on its image arbitrarily.
We write
\begin{equation*}
G_0 = 2 \gamma \left(u\right) = 2
\begin{bmatrix}
a \left(u\right)\\
p \left(u\right)
\end{bmatrix},\quad
G_1 = 2 \gamma \left(v\right) = 2
\begin{bmatrix}
a \left(v\right)\\
p \left(v\right)
\end{bmatrix},
\end{equation*}
and assume $u < v$ for simplicity.
We denote by $\Omega$ the region bounded by
the union of two curves,
the arc $C_1\colon \left[u, v\right] \ni k \mapsto
2 \gamma \left(k\right)$
and the line segment $C_2\colon \left[0, 1\right] \ni k \mapsto
G_1 + k \left(G_0 - G_1\right)$.
Then,
by the Green's theorem,
the area of $\Omega \subset \left(\R^2, \left(x, y\right)\right)$
is computed by the line integral
\begin{align*}
\frac{1}{2} \int_{C_1 + C_2} - y\, dx + x\, dy
=&\; \int_u^v \left(- p \left(k\right) a' \left(k\right)
+ a \left(k\right) p' \left(k\right)\right) dk\\
&+ \int_0^1
- \left(p \left(v\right)
+ k \left(p \left(u\right) - p \left(v\right)\right)\right)
\left(a \left(u\right) - a \left(v\right)\right) dk\\
&+ \int_0^1
\left(a \left(v\right)
+ k \left(a \left(u\right) - a \left(v\right)\right)\right)
\left(p \left(u\right) - p \left(v\right)\right) dk\\
=&\;
- z \left(u, v\right).
\end{align*}
Namely,
the representation formula
\eqref{formula:improper-AF-from-one-planecurve} says that,
at the midpoint of the line segment connecting
$2 \gamma \left(u\right)$ and $2 \gamma \left(v\right)$,
the height $z \left(u, v\right)$ is given by
the signed area of the region $\Omega$.
We also note that it is found in \cite{MR2486030} that,
when $\gamma$ is closed,
we can introduce new variables $x$, $y$ and $\psi$
by the graph expression of
\eqref{formula:improper-AF-from-one-planecurve} as
\begin{equation*}
\begin{bmatrix}
x\\
y\\
\psi \left(x, y\right)
\end{bmatrix}
=
\begin{bmatrix}
\gamma \left(u\right) + \gamma \left(v\right)\\
z \left(u, v\right)
\end{bmatrix}
\end{equation*}
so as to obtain a solution to the Monge-Amp\`{e}re equation
with the Dirichlet boundary condition
\begin{equation}\label{monge-ampere}
\left(\pd{x}^2 \psi\right)
\left(\pd{y}^2 \psi\right)
- \left(\pd{x} \pd{y} \psi\right)^2
= - 1,\quad
\psi\big|_{\partial \Gamma} = 0,
\end{equation}
where $\Gamma$ denotes the region bounded by
the closed curve $2 \gamma$.
We can readily verify \eqref{monge-ampere} by
a direct computation as follows.
From the definition of new variables
we have that
\begin{align*}
\pd{u} \psi &= \det \left[\gamma \left(u\right) - \gamma \left(v\right),
\gamma' \left(u\right)\right],\\
\pd{v} \psi &= \det \left[\gamma \left(u\right) - \gamma \left(v\right),
\gamma' \left(v\right)\right],
\end{align*}
and the differential relation $\left[\pd{u}, \pd{v}\right]
= \left[\pd{x}, \pd{y}\right]
\left[\gamma' \left(u\right), \gamma' \left(v\right)\right]$.
This implies that
\begin{align*}
\begin{bmatrix}
\pd{y} \psi\\
\pd{x} \psi
\end{bmatrix}
=
\begin{bmatrix}
1 & 0\\
0 & - 1
\end{bmatrix}
\left(\gamma \left(u\right) - \gamma \left(v\right)\right),
\end{align*}
and hence we have the Hesse matrix of $\psi$ as
\begin{align*}
\left[
\pd{y}
\begin{bmatrix}
\pd{y} \psi\\
\pd{x} \psi
\end{bmatrix},
\pd{x}
\begin{bmatrix}
\pd{y} \psi\\
\pd{x} \psi
\end{bmatrix}
\right]
=
\frac{1}{\det M \left(u, v\right)}\,
M \left(u, v\right)\,
{}^\mathrm{t} M \left(v, u\right),
\end{align*}
where
\begin{equation*}
M \left(u, v\right) =
\begin{bmatrix}
1 & 0\\
0 & - 1
\end{bmatrix}
\left[\gamma' \left(u\right), \gamma' \left(v\right)\right].
\end{equation*}
Therefore the determinant of Hesse matrix of $\psi$ is
identically $- 1$ because $\det {}^\mathrm{t} M \left(v, u\right)
= - \det M \left(u, v\right)$.
Thus the formula \eqref{formula:improper-AF-from-one-planecurve} also
provides us with
a construction method of solutions to \eqref{monge-ampere}.
Now we illustrate some examples
by taking several closed curves $\gamma$.
\begin{enumerate}
\item
First one is given by the circle
\begin{equation*}
\gamma \left(u\right) =
\begin{bmatrix}
\cos u\\
\sin u
\end{bmatrix},
\end{equation*}
which leads to
\begin{equation*}
f \left(u, v\right) =
\begin{bmatrix}
\cos u + \cos v\\
\sin u + \sin v\\
u - v - \sin \left(u - v\right)
\end{bmatrix} =
2
\begin{bmatrix}
\cos x \cos y\\
\cos x \sin y\\
x - \cos x \sin x
\end{bmatrix},
\end{equation*}
where $x = \left(u-v\right)/2$ and $y = \left(u+v\right)/2$.
Its data is
\begin{equation*}
\omega = \sin \left(u - v\right),\quad
A = 1,\quad
B = -1.
\end{equation*}
Therefore $f$ has singularities at
$S = \left\{\left.\left(u,v\right) \in \R^2
\;\right|\;
u \equiv v \pmod{\pi}\right\}$.
\begin{figure}[H]\label{fig:exam2-0}
\includegraphics[height=4cm,keepaspectratio]%
{./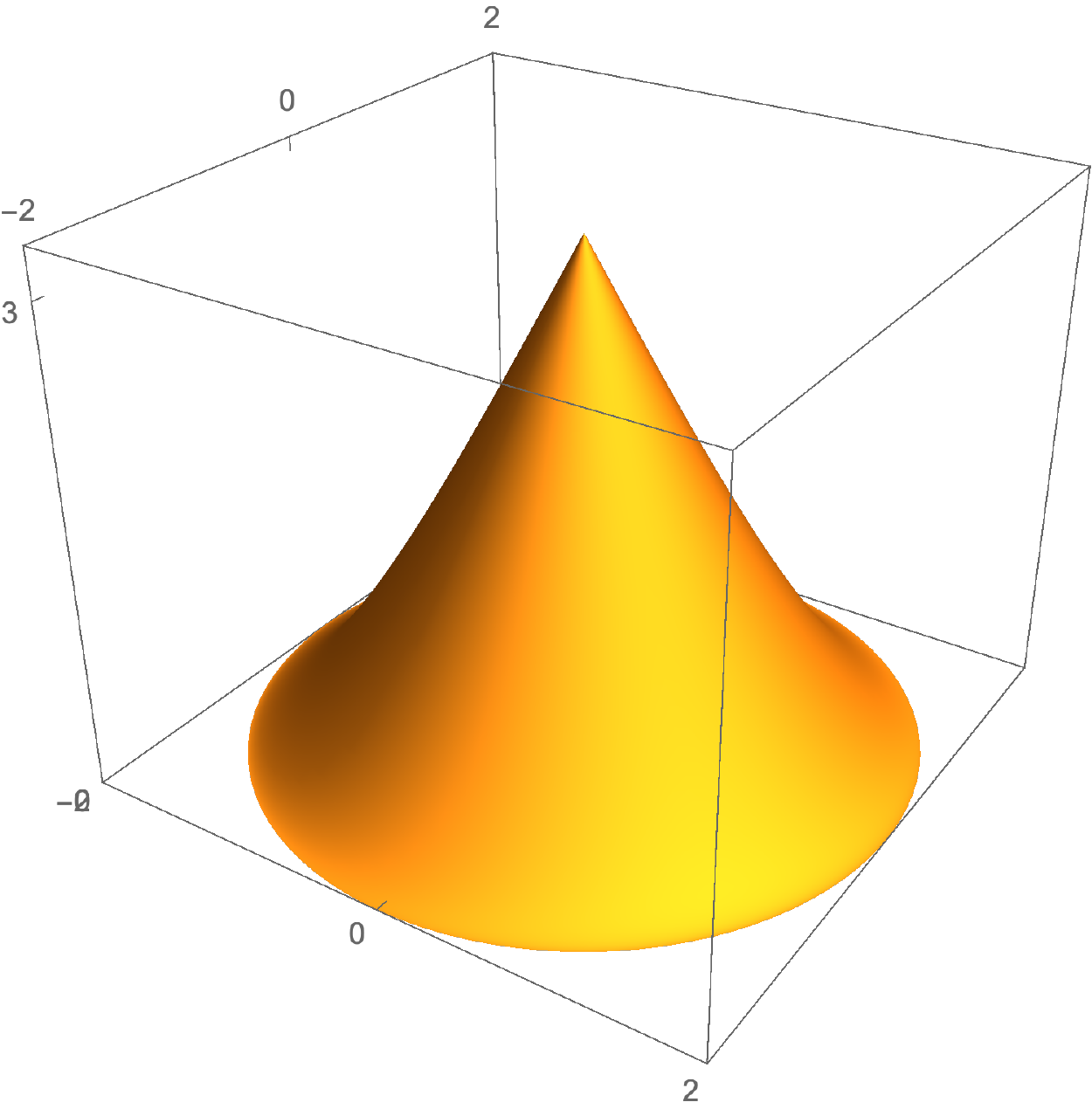}
\caption{An indefinite improper affine sphere
$f\ \left(0 < x < \pi/2,\, -\pi \leq y < \pi\right)$ over
the region enclosed by $2 \gamma$.}
\end{figure}
\begin{figure}[H]\label{fig:exam2-1}
\hfill
\hfill
\subfigure{\includegraphics[height=6cm,keepaspectratio]%
{./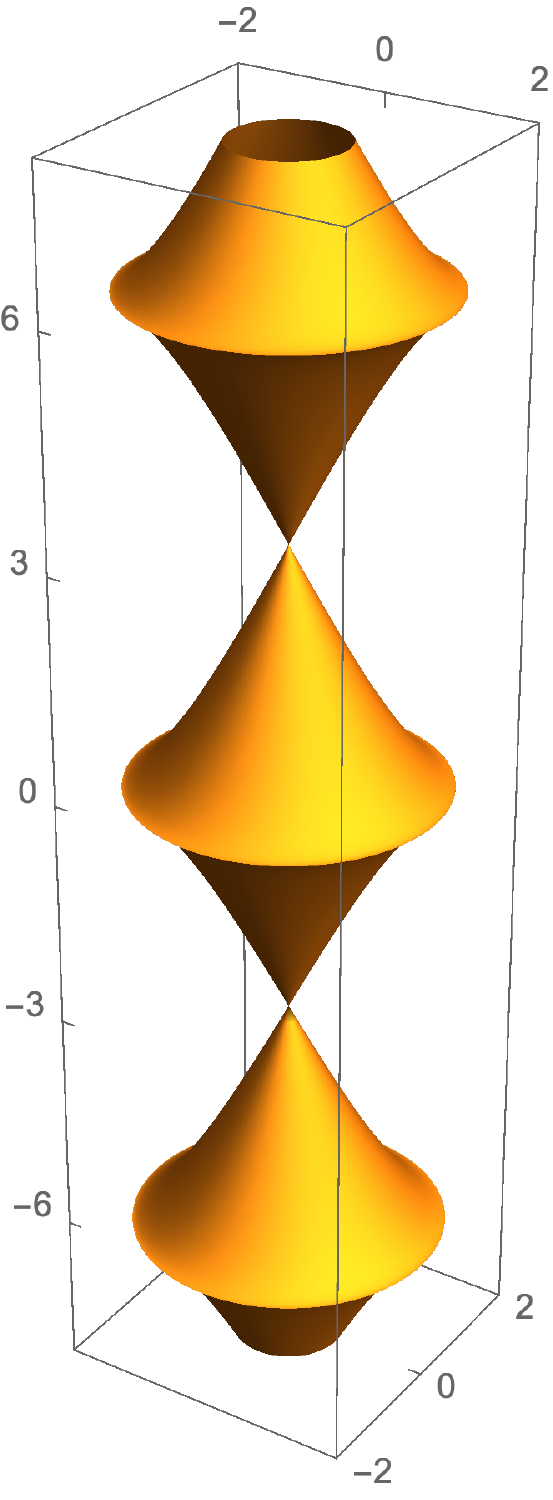}}
\hfill
\subfigure{\includegraphics[width=6cm,keepaspectratio]%
{./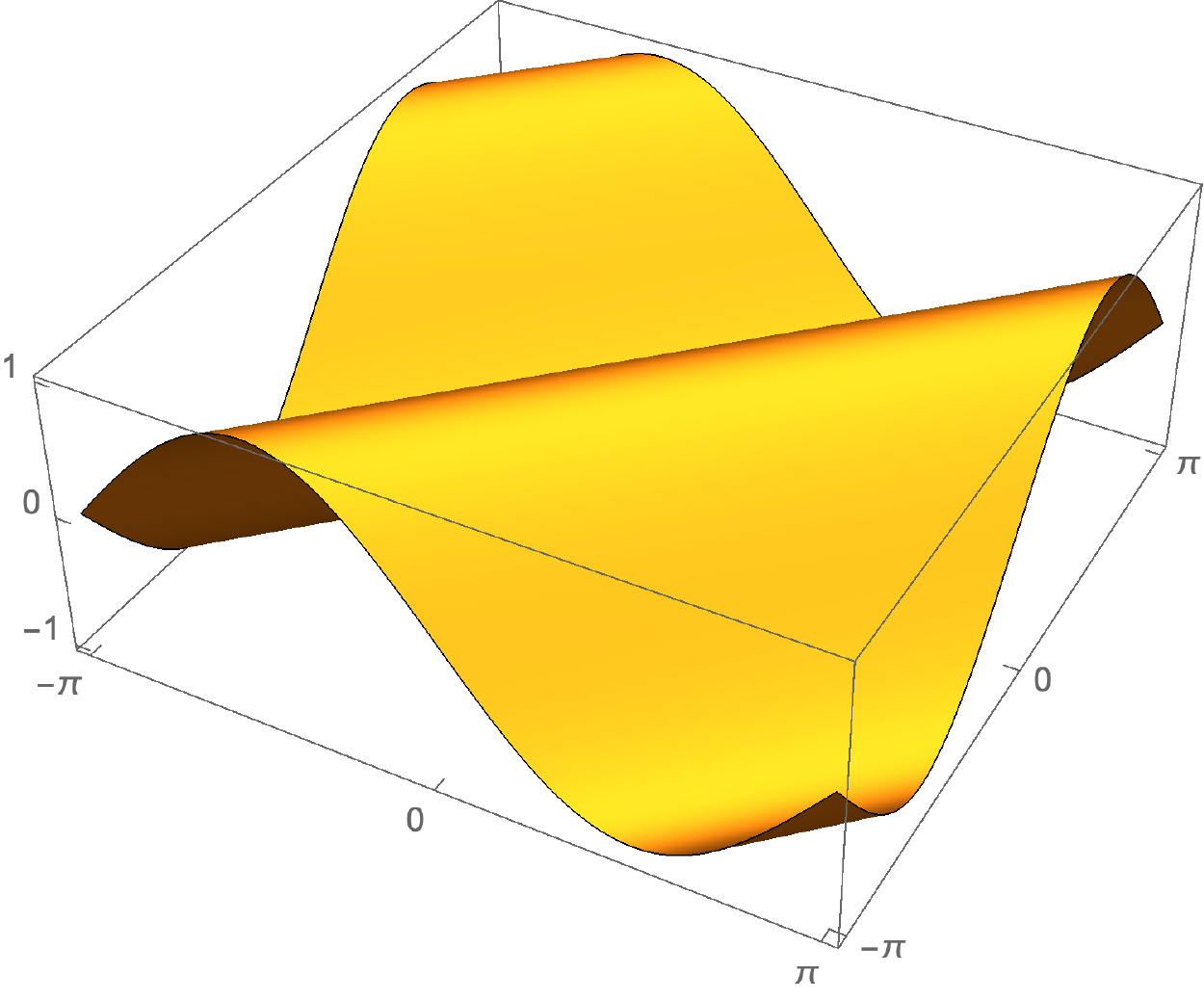}}
\hfill
\hfill
\caption{Left: an indefinite improper affine map $f$,
which is a series of surfaces in Figure \ref{fig:exam2-0},
joined along cuspidal edges and at cone points.
Right: the graph of $\omega$,
which gives the affine metric of $f$ apart from $S$.}
\end{figure}

\item
Second example is given by the square
\begin{equation*}
\gamma \left(u\right) =
\begin{bmatrix}
\left|\cos u\right| \cos u\\
\left|\sin u\right| \sin u
\end{bmatrix}.
\end{equation*}
We have that $\det \left[\gamma \left(u\right),
\gamma' \left(u\right)\right] = \left|\sin 2u\right|$
for $u \in \R \setminus \left(\pi/2\right) \Z$,
which follows from
\begin{equation*}
\gamma' \left(u\right) = 2
\begin{bmatrix}
- \left|\cos u\right| \sin u\\
\left|\sin u\right| \cos u
\end{bmatrix},\quad
\gamma'' \left(u\right) = 2 \cos 2u
\begin{bmatrix}
- \sign \left(\cos u\right)\\
\sign \left(\sin u\right)
\end{bmatrix},
\end{equation*}
where
\begin{equation*}
\sign x =
\begin{cases}
1 & \left(x>0\right)\\
0 & \left(x=0\right)\\
-1 & \left(x<0\right).
\end{cases}
\end{equation*}
It is convenient for the following discussion to interpret
$\gamma' \left(k\right) = 0$ and
$\gamma'' \left(k\right) \parallel \gamma \left(k\right)$
for all $k \in \left(\pi/2\right) \Z$.
It holds for all $u \in \R$ that
\begin{equation*}
\int_0^u \left|\sin 2k\right| dk
= \left\lceil \frac{2}{\pi} u\right\rceil
- \frac{\sign \left(\sin 2u\right)}{2}
\left(\cos 2u + \sign \left(\sin 2u\right)\right),
\end{equation*}
where we denote by $\left\lceil u\right\rceil$
the ceiling of $u$,
that is,
the smallest integer greater than or equal to $u$.
Thus we have
for $u, v \in \R \setminus \left(\pi/2\right) \Z$
that
\begin{equation*}
f \left(u, v\right) =
\begin{bmatrix}
\left|\cos u\right| \cos u
+ \left|\cos v\right| \cos v\\
\left|\sin u\right| \sin u
+ \left|\sin v\right| \sin v\\
z \left(u, v\right)
\end{bmatrix},
\end{equation*}
where
\begin{equation*}
\begin{split}
z \left(u, v\right) =&
\left|\cos u \sin v\right| \cos u \sin v
- \left|\cos v \sin u\right| \cos v \sin u\\
&+ \left\lceil \left(2/\pi\right) u\right\rceil
- \left(1/2\right) \sign \left(\sin 2u\right)
\left(\cos 2u + \sign \left(\sin 2u\right)\right)\\
&- \left\lceil \left(2/\pi\right) v\right\rceil
+ \left(1/2\right) \sign \left(\sin 2v\right)
\left(\cos 2v + \sign \left(\sin 2v\right)\right).
\end{split}
\end{equation*}
Its data is $A = B = 0$ and
\begin{align*}
\omega
&=
4 \left(- \left|\cos u \sin v\right| \cos v \sin u
+ \left|\cos v \sin u\right| \cos u \sin v\right).
\end{align*}
The singular set $S$ is a checkerboard
\begin{equation*}
S = \left\{\left(u, v\right) \in \R^2 \;\left|\;
u \in \frac{\pi}{2} \Z\ \,\text{or}\ \,
v \in \frac{\pi}{2} \Z\ \,\text{or}\ \,
\left\lceil \frac{2}{\pi} u\right\rceil \equiv
\left\lceil \frac{2}{\pi} v\right\rceil\;
\left(\mathrm{mod}\ 2\right)\right.\right\}.
\end{equation*}
\begin{figure}[H]
\hfill
\hfill
\subfigure{\includegraphics[height=6cm,keepaspectratio]%
{./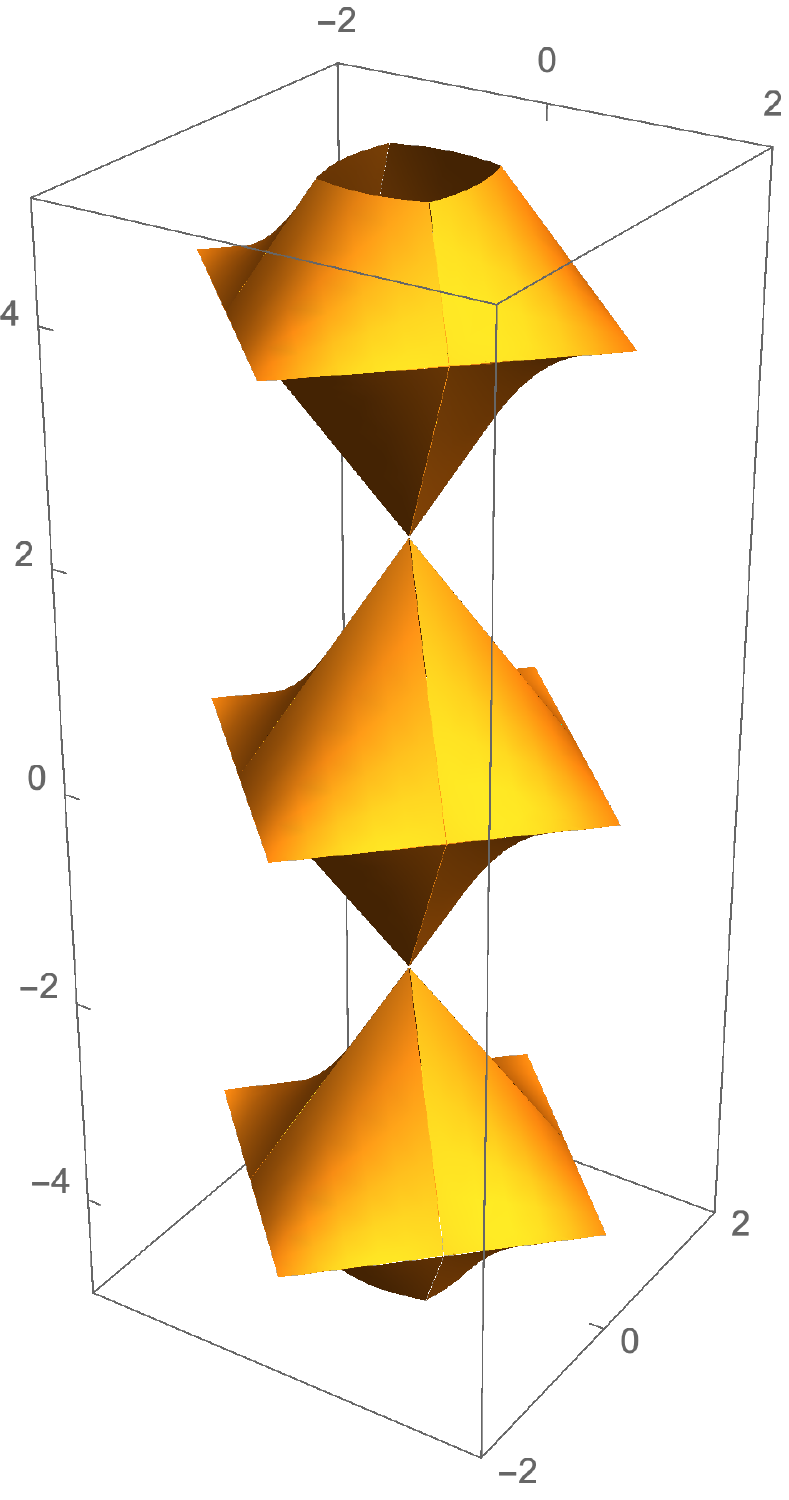}}
\hfill
\subfigure{\includegraphics[width=6cm,keepaspectratio]%
{./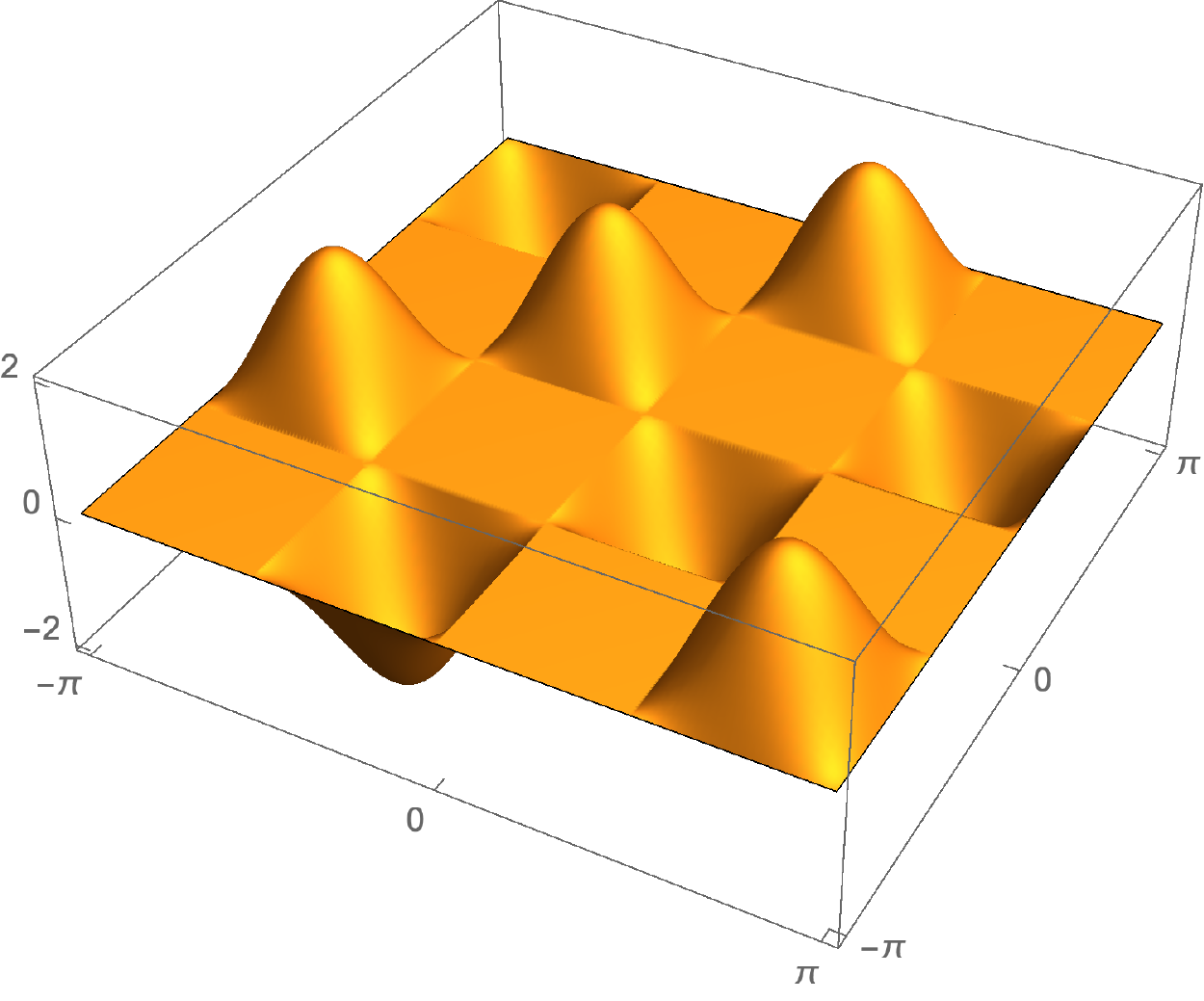}}
\hfill
\hfill
\caption{An indefinite improper affine map $f$ (left),
and its affine metric $\omega$ (right).}
\label{fig:exam2-2}
\end{figure}

\item
Last example is given by the curve
\begin{equation*}
\gamma \left(u\right) =
\cos u
\begin{bmatrix}
1/2 + \cos^2 u\\
2 \sin u
\end{bmatrix}.
\end{equation*}
We have
$\det \left[\gamma \left(u\right),
\gamma' \left(u\right)\right] = \left(3 + 2 \sin^2 u\right) \cos^3 u$
and hence
\begin{equation}\label{ex:integrate-det}
\int^u_0 \det \left[\gamma \left(k\right),
\gamma' \left(k\right)\right] dk
= \frac{5}{2} \sin u
+ \frac{5}{24} \sin 3u
- \frac{1}{40} \sin 5u.
\end{equation}
Therefore
\begin{align*}
f \left(u, v\right) &=
\begin{bmatrix}
\left(1 + \left(1/2\right) \cos 2u\right) \cos u
+ \left(1 + \left(1/2\right) \cos 2v\right) \cos v\\
\sin 2u + \sin 2v\\
z \left(u, v\right)
\end{bmatrix},
\end{align*}
where
\begin{equation*}
\begin{split}
z \left(u, v\right) =\;&
- \cos u \cos v \left(\sin u - \sin v\right)
\left(3 + 2 \sin u \sin v\right)\\
&+ \frac{5}{2} \left(\sin u - \sin v\right)
+ \frac{5}{24} \left(\sin 3u - \sin 3v\right)
- \frac{1}{40} \left(\sin 5u - \sin 5v\right).
\end{split}
\end{equation*}
Its data is
\begin{align*}
\omega &=
- \left(\sin u - \sin v\right)
\left(4 + 8 \sin u \sin v + 3 \cos 2u \cos 2v\right),\\
A &= \frac{1}{2} \left(19 - 8 \cos 2u + 3 \cos 4u\right) \cos u,\\
B &= - \frac{1}{2} \left(19 - 8 \cos 2v + 3 \cos 4v\right) \cos v.
\end{align*}
The singular set of $f$ is $S = S_1 \cup S_2$,
where
\begin{align*}
S_1 &= \left\{\left(u, v\right) \in \R^2 \;\big|\;
v \equiv u\ \left(\mathrm{mod}\ 2\pi\right),\
v \equiv - u + \pi\ \left(\mathrm{mod}\ 2\pi\right)\right\},\\
S_2 &= \left\{\left(u, v\right) \in \R^2 \;\big|\;
4 + 8 \sin u \sin v + 3 \cos 2u \cos 2v = 0\right\}.
\end{align*}
The sets $S_1$ and $S_2$ consist of lines and
circlelike curves, respectively.
The surface is compact and of genus $1$.
\end{enumerate}
\begin{figure}[H]
\hfill
\hfill
\subfigure{\includegraphics[height=6cm,keepaspectratio]%
{./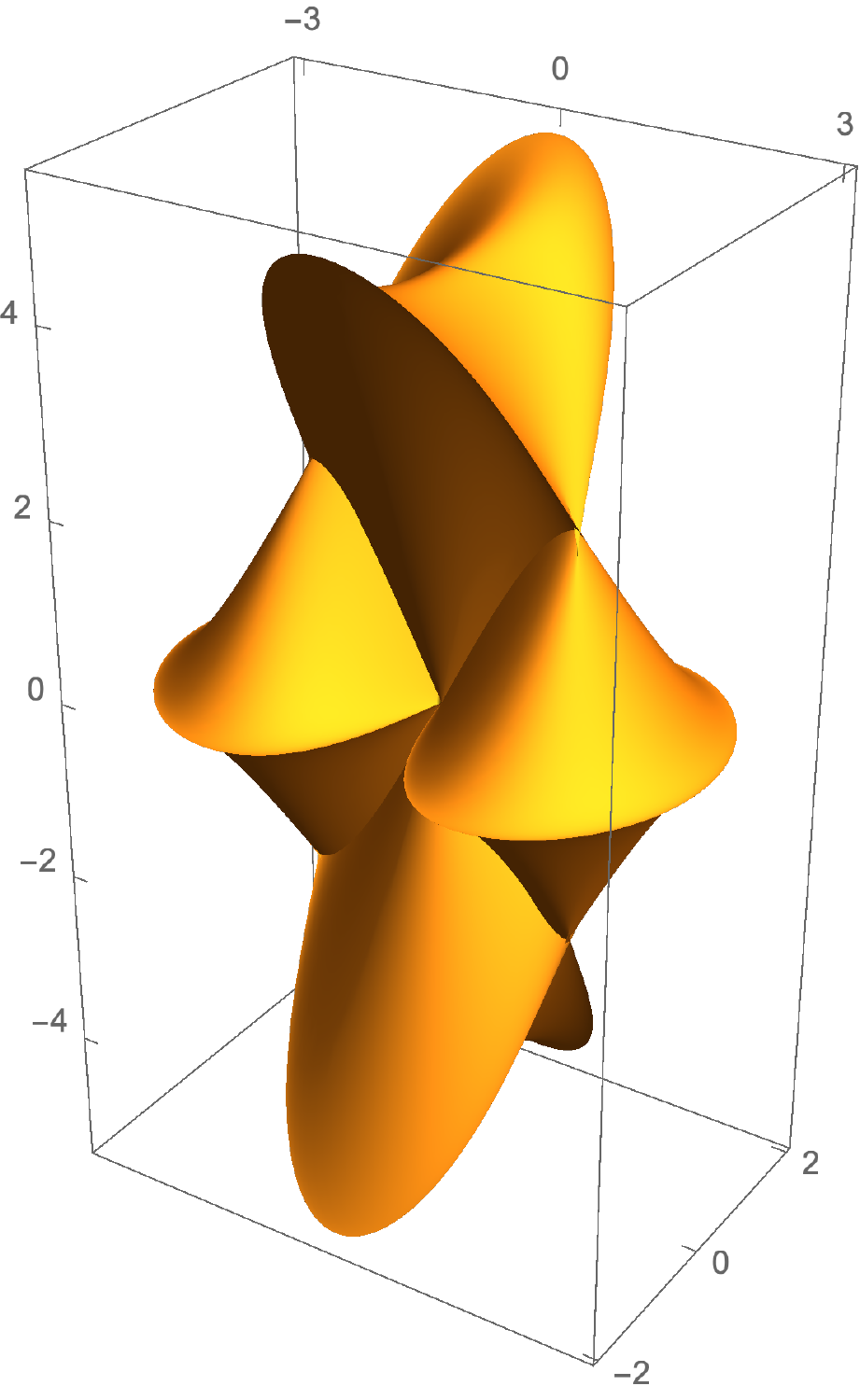}}
\hfill
\subfigure{\includegraphics[width=6cm,keepaspectratio]%
{./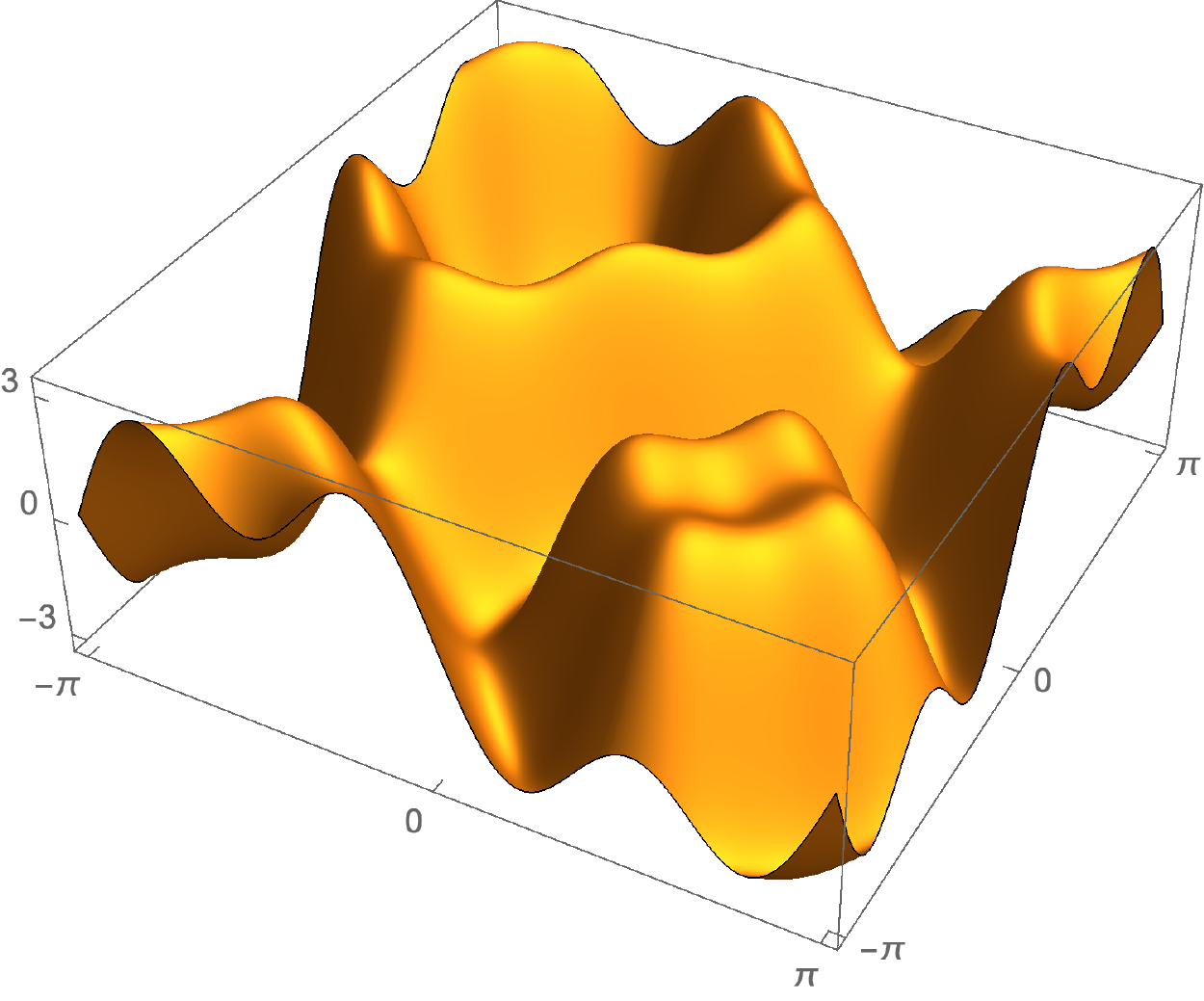}}
\hfill
\hfill
\caption{An indefinite improper affine map $f$ (left),
and its affine metric $\omega$ (right).}
\label{fig:exam2-3}
\end{figure}
\end{Example}

\section{Discrete indefinite affine spheres}\label{sc:discrete}

In the previous section
we derived Theorem \ref{thm:Rep} and Corollary \ref{cor:repformula}
which offer a Weierstrass type representation formula
of indefinite affine spheres.
Based on a technique of decompositions of the loop group,
we shall generalize this formula to discrete case,
and obtain a Weierstrass type representation formula for discrete
indefinite affine spheres.

\subsection{Definitions}

Let $f\colon \Z^2 \to \R^3,
\left(n, m\right) \mapsto f^m_n$ be a map.
We call $f$ a
\textit{discrete indefinite affine sphere} if it satisfies the following
two properties (\cite{MR1676596}, \cite{MR1724159}, \cite{MR1949349}):
\begin{enumerate}
\item\label{defn:d-asymptotic-coord}
Every five points $f^m_n$,
$f^m_{n \pm 1}$,
$f^{m \pm 1}_n$ lie on a plane.

\item
The line $\ell^m_n$ connecting two points
$f^{m+1}_{n+1} + f^m_n$ and $f^m_{n+1} + f^{m+1}_n$
satisfies either of the following two conditions:
\begin{enumerate}
\item\label{defn:d-proper-AF}
All the lines $\ell^m_n$ meet at one point in $\R^3$.

\item\label{defn:d-improper-AF}
All the lines $\ell^m_n$ are parallel to each other.
\end{enumerate}
\end{enumerate}
A discrete indefinite affine sphere $f$
is said to be \textit{proper} if it
satisfies the condition \eqref{defn:d-proper-AF},
or \textit{improper} if \eqref{defn:d-improper-AF}.

If $f$ is a discrete indefinite affine sphere,
the vector $f^{m+1}_{n+1} - f^m_{n+1} - f^{m+1}_n + f^m_n$ is
parallel to the \textit{discrete affine normal}
\begin{equation}\label{defn:d-affinenormal}
\xi^m_n = - H\,
\frac{f^m_{n+1} + f^{m+1}_n}{2}
+ \left(1 + H\right) \xi_0,
\end{equation}
where $\xi_0$ is a constant vector.
Here we set $H = - 1$ if $f$ is proper,
and $H = 0$ if $f$ is improper.
Without loss of generality
we can fix $\xi_0$ to be ${}^\mathrm{t} \left[0, 0, 1\right]$.
Taking into account a continuum limit,
we introduce positive numbers $\epsilon$ and $\delta$,
which play a role of lattice intervals.
In view of this it may be better to regard $f$ as a map
$f\colon \epsilon \Z \times \delta \Z \to \R^3$,
and hence entries of $f$ depend on $\epsilon$ and $\delta$.
We define
\begin{equation*}
\tilde{F}^m_n = \left[\frac{f^m_{n+1} - f^m_n}{\epsilon},\,
\frac{f^{m+1}_n - f^m_n}{\delta},\,
\xi^m_n\right].
\end{equation*}
We suppose that $\det \tilde{F}^m_n \neq 0$,
then there exist functions $\omega$, $A$, $B$ such that
\begin{align}
& \frac{f^m_{n+1} - 2 f^m_n + f^m_{n-1}}{\epsilon^2}
=\label{dGauss-11}
\left(\frac{\omega^m_n - \omega^m_{n-1}}{\epsilon \omega^m_n}
+ \frac{\delta H}{2} \omega^m_{n-1}\right)
\frac{f^m_{n+1} - f^m_n}{\epsilon}
+ \frac{A^m_n}{\omega^m_n}
\frac{f^{m+1}_n - f^m_n}{\delta},\\
& \frac{f^{m+1}_{n+1} - f^m_{n+1} - f^{m+1}_n + f^m_n}{\epsilon \delta}
=\label{dGauss-12}
\omega^m_n \xi^m_n,\\
& \frac{f^{m+1}_n - 2 f^m_n + f^{m-1}_n}{\delta^2}
=\label{dGauss-22}
\frac{B^m_n}{\omega^m_n}
\frac{f^m_{n+1} - f^m_n}{\epsilon}
+ \left(\frac{\omega^m_n - \omega^{m-1}_n}{\delta \omega^m_n}
+ \frac{\epsilon H}{2} \omega^{m-1}_n\right)
\frac{f^{m+1}_n - f^m_n}{\delta}.
\end{align}
Equations \eqref{dGauss-11} and \eqref{dGauss-22} are consequences
of the property \eqref{defn:d-asymptotic-coord}.
See \cite[p.~118]{MR1676596} or \cite[Proposition 3.4]{MR1949349}
for a proof.
Throughout the paper we further impose on $f$
the volume condition
\begin{equation*}
\det \tilde{F}^m_n
=
\frac{2 \omega^m_n}{2 - \epsilon \delta H \omega^m_n},
\end{equation*}
which can be regarded as a discrete analogue of
\eqref{volumecondition}.
We write
\begin{equation}\label{defn:g}
g^m_n = \frac{2}{2 -\epsilon \delta H \omega^m_n}
\end{equation}
to have expressions $\omega^m_n g^m_n = \det \tilde{F}^m_n$ and
\begin{align*}
A^m_n g^m_n &= \det \left[\frac{f^m_{n+1} - f^m_n}{\epsilon},\,
\frac{f^m_{n+1} - 2 f^m_n + f^m_{n - 1}}{\epsilon^2},\,
\xi^m_n\right],\\
B^m_n g^m_n &= \det \left[\frac{f^{m+1}_n - 2 f^m_n + f^{m-1}_n}{\delta^2},\,
\frac{f^{m+1}_n - f^m_n}{\delta},\,
\xi^m_n\right].
\end{align*}
From the compatibility condition among
\eqref{dGauss-11}--\eqref{dGauss-22},
it follows that $\omega$, $A$, $B$ satisfy the system
\begin{gather}
\omega^{m+1}_{n+1} \omega^m_n
- \frac{\omega^m_{n + 1} \omega^{m + 1}_n}{g^{m + 1}_{n + 1} g^m_n}
+ \epsilon \delta A^m_{n+1} B^{m+1}_{n+1} = 0,\label{eq:DGC-omega}\\
g^{m + 1}_{n + 1} A^{m+1}_{n+1} =
g^m_n A^m_{n+1}, \quad
g^{m + 1}_{n + 1} B^{m+1}_{n+1} =
g^m_n B^{m+1}_n.\label{eq:DGC-AB}
\end{gather}
Indeed,
the matrix $\tilde{F}^m_n$ varies according to the system
\begin{equation}\label{dGW:Ftilde}
\tilde{F}^m_{n+1} = \tilde{F}^m_n \tilde{U}^m_n,\quad
\tilde{F}^{m+1}_n = \tilde{F}^m_n \tilde{V}^m_n,
\end{equation}
where coefficient matrices are computed as
\begin{align}
\tilde{U}^m_n
&=\label{dlax:tildeU}
\begin{bmatrix}
\frac{\omega^m_{n+1}}{\omega^m_n} g^m_{n+1} & 0 &
- \epsilon \frac{H}{2}
\left(1 + \frac{\omega^m_{n+1}}{\omega^m_n} g^m_{n+1}\right)\\
\epsilon \frac{A^m_{n+1}}{\omega^m_n} g^m_{n+1} & 1 &
- \epsilon^2 \frac{H}{2} \frac{A^m_{n+1}}{\omega^m_n} g^m_{n+1}\\
\epsilon^2 A^m_{n+1} g^m_{n+1} & \epsilon \omega^m_n &
\frac{1}{g^m_n} - \epsilon^3 \frac{H}{2} A^m_{n+1} g^m_{n+1}
\end{bmatrix},\\
\tilde{V}^m_n
&=\label{dlax:tildeV}
\begin{bmatrix}
1 & \delta \frac{B^{m+1}_n}{\omega^m_n} g^{m+1}_n &
- \delta^2 \frac{H}{2} \frac{B^{m+1}_n}{\omega^m_n} g^{m+1}_n\\
0 & \frac{\omega^{m+1}_n}{\omega^m_n} g^{m+1}_n &
- \delta \frac{H}{2} \left(1 +
\frac{\omega^{m+1}_n}{\omega^m_n} g^{m+1}_n\right)\\
\delta \omega^m_n &
\delta^2 B^{m+1}_n g^{m+1}_n &
\frac{1}{g^m_n} - \delta^3 \frac{H}{2} B^{m+1}_n g^{m+1}_n
\end{bmatrix}.
\end{align}
The compatibility condition
$\tilde{U}^m_n \tilde{V}^m_{n+1}
= \tilde{V}^m_n \tilde{U}^{m+1}_n$ is
\eqref{eq:DGC-omega}--\eqref{eq:DGC-AB}.
Therefore the system \eqref{dGW:Ftilde}--\eqref{dlax:tildeV},
or equivalently \eqref{dGauss-11}--\eqref{dGauss-22},
has a solution
if and only if \eqref{eq:DGC-omega}--\eqref{eq:DGC-AB} hold.
The system \eqref{eq:DGC-omega}--\eqref{eq:DGC-AB} is
a discrete analogue of the system
\eqref{tzitzeica-liouville}--\eqref{tzitzeica-liouville:aux},
and hence called the \textit{discrete Tzitzeica equation} if $H = -1$,
or the \textit{discrete Liouville equation} if $H = 0$.
Since this system \eqref{eq:DGC-omega}--\eqref{eq:DGC-AB} is
invariant under the transformation
\begin{equation*}
A^m_n \mapsto \lambda^3 A^m_n,\quad
B^m_n \mapsto \lambda^{- 3} B^m_n,\quad
\lambda \in \R^\times,
\end{equation*}
the discrete affine sphere $f$ has $1$-parameter family,
which we call the \textit{associated family} of $f$.
The associated family preserves $\omega$.

\subsection{Loop group description}

In order to derive a representation formula for
discrete indefinite affine spheres,
we use decomposition techniques of loop groups.
To begin with,
following \cite{MR1676596},
we describe the discrete indefinite affine spheres
in terms of the loop groups.
We set
\begin{equation}\label{def:d-extended}
F^m_n = \tilde{F}^m_n
\begin{bmatrix}
\lambda^{-1} & 0 & \epsilon H/2\\
0 & \lambda \left(\omega^m_n g^m_n\right)^{-1} & \delta H/2\\
0 & 0 & 1
\end{bmatrix}.
\end{equation}
Then the map $F$ is $\SL$-valued,
and satisfies the system
\begin{equation}\label{eq:discreteLax}
F^m_{n+1} = F^m_n U^m_n, \quad
F^{m+1}_n = F^m_n V^m_n,
\end{equation}
where $U$ and $V$ are computed as
\begin{align}
U^m_n &=\label{eq:DU}
\begin{bmatrix}
\frac{\omega^m_{n+1} g^m_{n+1}}{\omega^m_n}
- \frac{H}{2} A^m_{n+1} g^m_{n+1} \left(\epsilon\lambda\right)^3 &
- \frac{H}{2} \frac{\omega^m_n}{\omega^m_{n+1} g^m_{n+1}}
\left(\epsilon\lambda\right)^2 &
- H \epsilon \lambda\\
A^m_{n+1} g^m_{n+1} \epsilon \lambda &
\frac{\omega^m_n}{\omega^m_{n+1} g^m_{n+1}} &
0\\
A^m_{n+1} g^m_{n+1} \left(\epsilon \lambda\right)^2 &
\frac{\omega^m_n}{\omega^m_{n+1} g^m_{n+1}} \epsilon \lambda &
1
\end{bmatrix},\\
V^m_n &=\label{eq:DV}
\begin{bmatrix}
\frac{1}{g^m_n} &
\frac{B^{m+1}_n}{\omega^{m+1}_n \omega^m_n g^m_n} \delta \lambda^{-1}
& 0 \\
- \frac{H}{2} \left(\omega^m_n\right)^2 g^m_n
\left(\delta \lambda^{-1}\right)^2 &
g^m_n - \frac{H}{2} \frac{B^{m+1}_n \omega^m_n g^m_n}{\omega^{m+1}_n}
\left(\delta\lambda^{-1}\right)^3 &
-  H \omega^m_n g^m_n \delta\lambda^{-1}\\
\omega^m_n \delta \lambda^{-1} &
\frac{B^{m+1}_n}{\omega^{m+1}_n} \left(\delta \lambda^{-1}\right)^2 &
1
\end{bmatrix}.
\end{align}
The consistency of \eqref{eq:discreteLax},
that is, $U^m_n V^m_{n+1} = V^m_n U^{m+1}_n$,
is of course given by \eqref{eq:DGC-omega}--\eqref{eq:DGC-AB}.
By multiplying $F$ by some constant matrix from the left if necessary,
without loss of generality we can assume that
\begin{equation}\label{eq:DFid}
F^0_0 = \id
\end{equation}
at the base point $(n, m)=(0, 0)$.
The family of gauged frames $F$ defined by \eqref{def:d-extended} with
the initial condition \eqref{eq:DFid} will be called
the \textit{extended frame} of discrete affine sphere $f$.
The extended frame $F$ is obviously a $\LSL$-valued map.
Conversely,
if the matrices $\overline{U}^m_n$ and $\overline{V}^m_n$ have
similar entries as \eqref{eq:DU} and \eqref{eq:DV} respectively,
then they give the extended family of discrete indefinite affine spheres.
In fact we have the following proposition,
which has been shown for the discrete indefinite
proper affine spheres ($H=-1$) in
\cite[Theorem in Section $6$]{MR1676596}.
\begin{Proposition}\label{prop:extendtosurf}
Let $\overline{U}{}^m_n$ and $\overline{V}{}^m_n$ be matrices
which depend on a parameter $\lambda \in \R^\times$ as
\begin{equation}\label{eq:DUVarb}
\begin{split}
\overline{U}{}^m_n &=
\overline{U}{}^0_{n,m} + \lambda \overline{U}{}^1_{n,m}
+ \lambda^2 \overline{U}{}^2_{n,m}+ \lambda^3 \overline{U}{}^3_{n,m},\\
\overline{V}{}^m_n &=
\overline{V}{}^0_{n,m} + \lambda^{-1} \overline{V}{}^1_{n,m}
+ \lambda^{-2} \overline{V}{}^2_{n,m} + \lambda^{-3} \overline{V}{}^3_{n,m}.
\end{split}
\end{equation}
Here coefficient matrices $\overline{U}{}^i$ and
$\overline{V}{}^i$,
which are labeled by the index $i\, \left(0 \leq i \leq 3\right)$,
have the entries
\begin{gather*}
\overline{U}{}^0_{n,m}
= \diag \left(1/u^{22}_{n,m},\, u^{22}_{n,m},\, 1\right),\quad
\overline{U}{}^3_{n,m}
= \diag \big(- \left(H/2\right)
\left(u^{13}_{n,m}\right)^2 u^{21}_{n,m},\, 0,\, 0\big),\\
\overline{U}{}^1_{n,m} =
\begin{bmatrix}
0 & 0 & - H u^{13}_{n,m}\\
u^{21}_{n,m} & 0 & 0\\
0 & u^{22}_{n,m} u^{13}_{n,m} & 0
\end{bmatrix},\quad
\overline{U}{}^2_{n,m} =
\begin{bmatrix}
0 & -\frac{H}{2} u^{22}_{n,m} \left(u^{13}_{n,m}\right)^2 & 0\\
0 & 0 & 0\\
u^{13}_{n,m} u^{21}_{n,m} & 0 & 0
\end{bmatrix},
\end{gather*}
and
\begin{gather*}
\overline{V}{}^0_{n,m}
= \diag \left(v^{11}_{n,m},\, 1/v^{11}_{n,m},\, 1\right),\quad
\overline{V}{}^3_{n,m}
= \diag \big(0,\,
- \left(H/2\right) \left(v^{23}_{n,m}\right)^2 v^{12}_{n,m},\, 0\big),\\
\overline{V}{}^1_{n,m} =
\begin{bmatrix}
0 & v^{12}_{n,m} & 0\\
0 & 0 & - H v^{23}_{n,m}\\
v^{11}_{n,m} v^{23}_{n,m} & 0 & 0
\end{bmatrix},\quad
\overline{V}{}^2_{n,m} =
\begin{bmatrix}
0 & 0 & 0\\
-\frac{H}{2} v^{11}_{n,m} \left(v^{23}_{n,m}\right)^2 & 0 & 0\\
0 & v^{23}_{n,m} v^{12}_{n,m} & 0
\end{bmatrix},
\end{gather*}
with some functions $u^{21}$, $v^{12}$,
and nowhere vanishing functions $u^{13}$, $u^{22}$, $v^{11}$, $v^{23}$,
and a constant $H \in \left\{- 1, 0\right\}$.
If $\overline{U}$ and $\overline{V}$ satisfy
the relation $\overline{U}{}^m_n \overline{V}{}^m_{n+1}
= \overline{V}{}^m_n \overline{U}{}^{m+1}_n$ for all $\lambda$,
then there exist a map $\overline{F}$ and a gauge $D$ such that
\begin{enumerate}
\item
$\overline{F}$ satisfies the system
$\overline{F}{}^m_{n+1}
= \overline{F}{}^m_n \overline{U}{}^m_n$,
$\overline{F}{}^{m+1}_n
= \overline{F}{}^m_n \overline{V}{}^m_n$ and

\item
$\left(D^0_0\right)^{-1} \overline{F}{}^m_n D^m_n$ is the extended frame of
some discrete indefinite affine sphere.
\end{enumerate}
\end{Proposition}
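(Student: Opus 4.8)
The plan is to first \emph{integrate} the given pair $(\overline{U},\overline{V})$ into a frame $\overline{F}$, and then to \emph{gauge} that frame by a diagonal loop so that its transition matrices become literally the standard coefficient matrices \eqref{eq:DU} and \eqref{eq:DV}; once this is done the conclusion follows from the reconstruction that defines the extended frame. To produce $\overline{F}$, I observe that the hypothesis $\overline{U}{}^m_n\overline{V}{}^m_{n+1}=\overline{V}{}^m_n\overline{U}{}^{m+1}_n$ is exactly the discrete zero-curvature condition, i.e.\ consistency around each elementary square. Hence, fixing $\overline{F}{}^0_0=\id$ and propagating by $\overline{F}{}^m_{n+1}=\overline{F}{}^m_n\overline{U}{}^m_n$ and $\overline{F}{}^{m+1}_n=\overline{F}{}^m_n\overline{V}{}^m_n$ defines a single-valued map on $\Z^2$, path independence being guaranteed by the compatibility relation. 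This already gives conclusion (1); moreover, since the prescribed entries of $\overline{U},\overline{V}$ carry the $T$- and $Q$-twisting built into $\LSL$, the frame $\overline{F}$ is $\LSL$-valued.

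Next I construct the gauge $D^m_n=\diag\!\left(d^m_n,(d^m_n)^{-1},1\right)$; being diagonal and $\lambda$-independent it is compatible with the $T$- and $Q$-symmetries of $\LSL$, exactly as the diagonal gauge in the proof of Theorem \ref{thm:Rep}, so $\hat{F}^m_n=(D^0_0)^{-1}\overline{F}{}^m_n D^m_n$ stays $\LSL$-valued, with transition matrices $\hat{U}^m_n=(D^m_n)^{-1}\overline{U}{}^m_n D^m_{n+1}$ and $\hat{V}^m_n=(D^m_n)^{-1}\overline{V}{}^m_n D^{m+1}_n$. I fix $d^m_n=u^{13}_{n,m}/\epsilon$, which is legitimate because $u^{13}$ never vanishes: for $H\neq 0$ this is forced by the $(1,3)$-slot of $\overline{U}$, and for every $H$ it is the rescaling that converts the pure $\lambda$-grading of \eqref{eq:DUVarb} into the $\epsilon\lambda$- and $\delta\lambda^{-1}$-grading visible in \eqref{eq:DU} and \eqref{eq:DV}. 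The geometric data are then read off from the lowest powers of $\lambda$: $A$ and $B$ from $u^{21}$ and $v^{12}$, the function $g$ from the $\lambda^0$ block $\diag(1/g,g,1)$ of $\hat{V}$, and $\omega$ from the combination $u^{13}v^{23}=\epsilon\delta\,\omega g$ together with \eqref{defn:g}.

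The computational core, and the step I expect to be the main obstacle, is to verify that this single choice of $d$ makes $\hat{U}$ and $\hat{V}$ agree \emph{identically} with \eqref{eq:DU} and \eqref{eq:DV}. The delicate point is that the $\lambda^2$- and $\lambda^3$-coefficients in \eqref{eq:DUVarb} are not independent data: in the hypothesis they are prescribed quadratic and cubic expressions in $u^{13},u^{21},u^{22}$ (respectively $v^{11},v^{12},v^{23}$), while in the targets they are prescribed expressions in $\omega,A,B,g$. One must check that the two prescriptions coincide after the substitution above, and, most importantly, that the structural identity \eqref{defn:g} and the Codazzi-type relations \eqref{eq:DGC-AB} are not extra hypotheses but are \emph{forced} by $\overline{U}{}^m_n\overline{V}{}^m_{n+1}=\overline{V}{}^m_n\overline{U}{}^{m+1}_n$, which is the only mechanism coupling the $u$-data of $\overline{U}$ to the $v$-data of $\overline{V}$ across the lattice. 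It is precisely the rigidity of the prescribed entry patterns that makes a single scalar gauge sufficient to normalize all entries of both $U$ and $V$ at once.

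Finally I assemble the statement. By construction $\hat{F}=(D^0_0)^{-1}\overline{F}D$ solves \eqref{eq:discreteLax} with $U,V$ equal to \eqref{eq:DU}, \eqref{eq:DV}, and $\hat{F}^0_0=(D^0_0)^{-1}\overline{F}{}^0_0 D^0_0=\id$ since $\overline{F}{}^0_0=\id$, so it satisfies the defining conditions \eqref{def:d-extended}--\eqref{eq:DFid} of an extended frame. Gauge equivalence carries the compatibility of $(\overline{U},\overline{V})$ to that of $(\hat{U},\hat{V})$, which for the standard coefficients is exactly the discrete Gauss--Codazzi system \eqref{eq:DGC-omega}--\eqref{eq:DGC-AB}; thus $\omega,A,B$ form admissible data. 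Undoing \eqref{def:d-extended} recovers $\tilde{F}^m_n$, whose first two columns are the discrete difference quotients of a map $f\colon\Z^2\to\R^3$; summing these columns produces $f$, the required discrete closedness of the two $\R^3$-valued difference forms being again equivalent to the compatibility relation. A direct check that $\tilde{F}$ solves \eqref{dGW:Ftilde}--\eqref{dlax:tildeV} then furnishes the planarity property \eqref{defn:d-asymptotic-coord} and the parallel-or-concurrent behaviour of the lines $\ell^m_n$, so $f$ is a discrete indefinite affine sphere and $\hat{F}$ is its extended frame, establishing conclusion (2).
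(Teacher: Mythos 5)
Your skeleton is the same as the paper's: integrate the compatible pair $(\overline{U},\overline{V})$ into $\overline{F}$ from $\overline{F}{}^0_0=\id$, gauge by $D^m_n=\diag\bigl(u^{13}_{n,m}/\epsilon,\ \epsilon/u^{13}_{n,m},\ 1\bigr)$, and argue that the gauged transition matrices coincide with \eqref{eq:DU}--\eqref{eq:DV}. The integration step and the final reconstruction are fine. But the middle step --- which you yourself label ``the computational core'' --- is exactly where your write-up stops: you assert that the coincidence with \eqref{eq:DU}--\eqref{eq:DV} ``must be checked'' and that \eqref{defn:g} and the Codazzi-type relations are ``forced'' by $\overline{U}{}^m_n \overline{V}{}^m_{n+1}=\overline{V}{}^m_n\overline{U}{}^{m+1}_n$, but you never exhibit the mechanism, and this assertion is the entire nontrivial content of the proposition. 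Concretely: after the gauge, the $\lambda^0$-parts of the new transition matrices involve the two a priori unrelated scalars $r^m_n = u^{13}_{n,m}u^{22}_{n,m}/u^{13}_{n+1,m}$ and $h^m_n = u^{13}_{n,m}/(u^{13}_{n,m+1}v^{11}_{n,m})$, and nothing in the hypotheses ties either of them to the quantity $g^m_n$ of \eqref{defn:g} built from the candidate $\omega$. The paper closes precisely this gap by entry-by-entry comparison in the compatibility relation \eqref{eq:d-compatibility-2}: the $(2,2)$- and $(3,2)$-entries together force $h^m_n=g^m_n$, and the $(1,2)$- and $(3,2)$-entries then force $r^m_n\,\omega^m_{n+1}g^m_{n+1}=\omega^m_n$. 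Only after these two identities do the gauged matrices become literally \eqref{eq:DU} and \eqref{eq:DV}; without them your conclusion (2) is unproved.

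A secondary but related defect: your data assignment is circular. You read $g$ off the $\lambda^0$-block of $\hat V$ (this is the paper's $h$) and then define $\omega$ by $u^{13}_{n,m}v^{23}_{n,m}=\epsilon\delta\,\omega^m_n g^m_n$ ``together with \eqref{defn:g}''; but demanding that this $g$ also satisfy \eqref{defn:g} is equivalent to the identity $h^m_n=g^m_n$ that still awaits proof. The paper avoids this by defining $\omega^m_n=v^{11}_{n,m}v^{23}_{n,m}u^{13}_{n,m+1}/(\epsilon\delta)$, $A^m_n$ and $B^m_n$ unconditionally, defining $g$ from $\omega$ via \eqref{defn:g}, and only then proving the two scalar identities from compatibility. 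So your approach is the right one and would succeed, but as written it is an outline: the derivation of those identities, which is the actual proof, is missing.
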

\begin{proof}
Because of the relation
$\overline{U}{}^m_n \overline{V}{}^m_{n+1}
= \overline{V}{}^m_n \overline{U}{}^{m+1}_n$,
the existence of $\overline{F}$ is clear.
We fix a positive number $\epsilon$, and set
\begin{equation*}
F^m_n = \overline{F}{}^m_n D^m_n,\quad
D^m_n = \diag \left(u^{13}_{n,m}/\epsilon,\,
\epsilon/u^{13}_{n,m},\, 1\right).
\end{equation*}
Then $F$ satisfies
$F^m_{n+1} = F^m_n U^m_n$ and
$F^{m+1}_n = F^m_n V^m_n$,
where
\begin{align*}
U^m_n
&= \left(D^m_n\right)^{-1} \overline{U}{}^m_n D^m_{n+1}\\
&=
\begin{bmatrix}
\frac{u^{13}_{n+1,m}}{u^{13}_{n,m} u^{22}_{n,m}}
- \lambda^3 \frac{H}{2} u^{13}_{n,m} u^{21}_{n,m} u^{13}_{n+1,m} &
- \frac{H}{2} \lambda^2
\frac{\epsilon^2 u^{13}_{n,m} u^{22}_{n,m}}{u^{13}_{n+1,m}} &
- H \epsilon \lambda\\
\lambda \frac{u^{13}_{n,m} u^{21}_{n,m} u^{13}_{n+1,m}}{\epsilon^2} &
\frac{u^{13}_{n,m} u^{22}_{n,m}}{u^{13}_{n+1,m}} & 0\\
\lambda^2 \frac{u^{13}_{n,m} u^{21}_{n,m} u^{13}_{n+1,m}}{\epsilon} &
\lambda \frac{\epsilon u^{13}_{n,m} u^{22}_{n,m}}{u^{13}_{n+1,m}} & 1
\end{bmatrix},\\
V^m_n
&= \left(D^m_n\right)^{-1} \overline{V}{}^m_n D^{m+1}_n\\
&=
\begin{bmatrix}
\frac{v^{11}_{n,m} u^{13}_{n,m+1}}{u^{13}_{n,m}} &
\frac{\epsilon^2 v^{12}_{n,m}}{\lambda u^{13}_{n,m} u^{13}_{n,m+1}} & 0\\
- \frac{H}{2}
\frac{u^{13}_{n,m} u^{13}_{n,m+1} v^{11}_{n,m} \left(v^{23}_{n,m}\right)^2}%
{\lambda^2 \epsilon^2} &
\frac{u^{13}_{n,m}}{u^{13}_{n,m+1} v^{11}_{n,m}}
- \frac{H}{2} \frac{u^{13}_{n,m} v^{12}_{n,m}
\left(v^{23}_{n,m}\right)^2}{\lambda^3 u^{13}_{n,m+1}} &
- H \frac{u^{13}_{n,m} v^{23}_{n,m}}{\lambda \epsilon}\\
\frac{v^{11}_{n,m} v^{23}_{n,m} u^{13}_{n,m+1}}{\lambda \epsilon} &
\frac{\epsilon v^{12}_{n,m} v^{23}_{n,m}}{\lambda^2 u^{13}_{n,m+1}} & 1
\end{bmatrix}.
\end{align*}
Next we fix a positive number $\delta$ and introduce sequences $\omega$,
$A$, $B$ by
\begin{equation*}
\omega^m_n =
\frac{v^{11}_{n,m} v^{23}_{n,m} u^{13}_{n,m+1}}{\epsilon \delta},
\end{equation*}
and
\begin{equation*}
A^m_n =
\frac{u^{13}_{n-1,m} u^{21}_{n-1,m} u^{13}_{n,m}}{\epsilon^3 g^m_n},\quad
B^m_n =
\frac{\epsilon\, v^{12}_{n,m-1} v^{23}_{n,m-1}}{\delta^2 u^{13}_{n,m}}
\omega^m_n,
\end{equation*}
where $g$ is defined by \eqref{defn:g}.
Therefore the matrices $U^m_n$ and $V^m_n$ are written as
\begin{align*}
U^m_n
&=
\begin{bmatrix}
\frac{1}{r^m_n} -
\lambda^3 \epsilon^3
\frac{H}{2} A^m_{n+1} g^m_{n+1} &
- \frac{H}{2} \lambda^2 \epsilon^2 r^m_n &
- H \epsilon \lambda\\
\lambda \epsilon A^m_{n+1} g^m_{n+1} &
r^m_n & 0\\
\lambda^2 \epsilon^2 A^m_{n+1} g^m_{n+1} &
\lambda \epsilon r^m_n & 1
\end{bmatrix},\\
V^m_n
&=
\begin{bmatrix}
\frac{1}{h^m_n} &
\frac{\delta B^{m+1}_n}{\lambda \omega^m_n \omega^{m+1}_n h^m_n} & 0\\
- \frac{H}{2}
\frac{\delta^2 \left(\omega^m_n\right)^2 h^m_n}{\lambda^2} &
h^m_n
- \frac{H}{2} \frac{\delta^3 B^{m+1}_n \omega^m_n h^m_n}%
{\lambda^3 \omega^{m+1}_n} &
- H \frac{\delta h^m_n \omega^m_n}{\lambda}\\
\frac{\delta \omega^m_n}{\lambda} &
\frac{\delta^2 B^{m+1}_n}{\lambda^2 \omega^{m+1}_n} & 1
\end{bmatrix},
\end{align*}
where
\begin{equation*}
h^m_n = \frac{u^{13}_{n,m}}{u^{13}_{n,m+1} v^{11}_{n,m}},\quad
r^m_n = \frac{u^{13}_{n,m} u^{22}_{n,m}}{u^{13}_{n+1,m}}.
\end{equation*}
We note that the condition $\overline{U}{}^m_n \overline{V}{}^m_{n+1}
= \overline{V}{}^m_n \overline{U}{}^{m+1}_n$ is equivalent to
\begin{equation}\label{eq:d-compatibility-2}
U^m_n V^m_{n+1} = V^m_n U^{m+1}_n,
\end{equation}
which implies that
\begin{align*}
h^m_n = g^m_n,\quad
r^m_n = \frac{\omega^m_n}{\omega^m_{n+1} g^m_{n+1}}.
\end{align*}
In fact,
comparing the $(2,2)$- and $(3,2)$-entries of
the both sides of \eqref{eq:d-compatibility-2},
we have
\begin{align*}
h^m_{n+1} r^m_n + \frac{\epsilon \delta g^m_{n+1} A^m_{n+1} B^{m+1}_{n+1}}%
{h^m_{n+1} \omega^m_{n+1} \omega^{m+1}_{n+1}}
&= \frac{h^m_n r^{m+1}_n}{\left(g^m_n\right)^2},\\
h^m_{n+1} r^m_n + \frac{\epsilon \delta g^m_{n+1} A^m_{n+1} B^{m+1}_{n+1}}%
{h^m_{n+1} \omega^m_{n+1} \omega^{m+1}_{n+1}}
&= \frac{r^{m+1}_n}{g^m_n}.
\end{align*}
Then it immediately follows that $h$ should be $g$.
Further,
from $(1,2)$- and $(3,2)$-entries of \eqref{eq:d-compatibility-2},
we have
\begin{align*}
\frac{g^m_{n+1} B^{m+1}_{n+1}
\left(2 - \epsilon \delta H \left(1 + r^m_n\right) \omega^m_{n+1}\right)
\omega^{m+1}_n}%
{2 \omega^{m+1}_{n+1} B^{m+1}_n r^{m+1}_n}
&= \frac{2 r^m_n \omega^m_{n+1}}{g^m_n \omega^m_n
\left(2 - \epsilon \delta H \left(1 + r^m_n\right) \omega^m_{n+1}\right)},\\
\frac{g^m_{n+1} B^{m+1}_{n+1}
\left(2 - \epsilon \delta H \left(1 + r^m_n\right) \omega^m_{n+1}\right)
\omega^{m+1}_n}%
{2 \omega^{m+1}_{n+1} B^{m+1}_n r^{m+1}_n}
&= 1,
\end{align*}
which implies that $r^m_n \omega^m_{n+1} g^m_{n+1} = \omega^m_n$.
Thus $U^m_n$ and $V^m_n$ become exactly the same as
\eqref{eq:DU} and \eqref{eq:DV}.
We conclude that
$\left(D^0_0\right)^{-1} F^m_n$ is the extended frame of
some discrete indefinite affine sphere.
\end{proof}
We are now in position to state one of the main theorems of this paper,
which is a discrete analogue of Theorem \ref{thm:Rep}.
\begin{Theorem}\label{maintheorem}
Let $\epsilon$ and $\delta$ be positive numbers,
and $F\colon \Z^2 \to \LSL$ be the extended frame
of a discrete indefinite affine sphere
with the discrete affine normal \eqref{defn:d-affinenormal}.
By the Birkhoff decomposition,
we decompose $F^m_n$ near $\left(n, m\right) = \left(0, 0\right)$ as
\begin{equation}\label{decomposeFbyF+F-G-G+}
F^m_n = F^+_{n,m} F^-_{n,m} = G^-_{n,m} G^+_{n,m},
\end{equation}
where $F^+_{n,m} \in \LSLPN$,
$F^-_{n,m} \in \LSLN$,
$G^-_{n,m} \in \LSLNN$ and
$G^+_{n,m} \in \LSLP$.
Then $F^+$ and $G^-$ do not depend on $m$ and $n$, respectively,
that is, they satisfy that
\begin{equation*}
F^+_{n,m+1} = F^+_{n,m},\quad
G^-_{n+1,m} = G^-_{n,m}.
\end{equation*}
We write $F^+_n$ for $F^+_{n,m}$ and $G^-_m$ for $G^-_{n,m}$,
so that we have the ordinary difference equations
\begin{equation}\label{eq:DMC0}
F^+_{n+1} = F^+_n \xi^+_n,\quad
G^-_{m+1} = G^-_m \xi^-_m
\end{equation}
with
\begin{align}
\xi^+_n &=\label{eq:DMC}
\begin{bmatrix}
1 - \frac{H}{2}  (\alpha_{n+1})^2 \beta_{n+1} (\epsilon \lambda)^{3} &
- \frac{H}{2} (\alpha_{n+1})^2 (\epsilon \lambda)^2 &
- H \alpha_{n+1} \epsilon \lambda \\
\beta_{n+1} \epsilon \lambda  & 1 & 0  \\
\alpha_{n+1} \beta_{n+1} (\epsilon \lambda)^2 &
\alpha_{n+1}  \epsilon \lambda &  1
\end{bmatrix},\\
\xi^-_m &=\label{eq:DMC2}
\begin{bmatrix}
1  & \sigma_{m+1} \delta \lambda^{-1}& 0 \\
- \frac{H}{2} (\rho_{m+1})^2 (\delta \lambda^{-1})^2 &
1 - \frac{H}{2} \sigma_{m+1} (\rho_{m+1})^2 (\delta \lambda^{-1})^3 &
-H \rho_{m+1} \delta \lambda^{-1} \\
\rho_{m+1} \delta \lambda^{-1} &
\sigma_{m+1} \rho_{m+1} (\delta \lambda^{-1})^2 & 1
\end{bmatrix},
\end{align}
where functions $\alpha, \beta$ depend only on $n$,
and $\sigma, \rho$ only on $m$.
Moreover $\alpha_n \neq 0$ and $\rho_m \neq 0$ for all $n$ and $m$.

Conversely, let $\alpha_n$, $\beta_n$ be
functions depending only on $n$,
and $\sigma_m$, $\rho_m$ functions depending only on $m$.
Assume that  $\alpha_n$ and $\rho_m$ have no zeros.
Let $F^+_n$ and $G^-_m$ be solutions to the system
\eqref{eq:DMC0}--\eqref{eq:DMC2} with
the initial condition $F^+_0 = G^-_0 = \id$.
Define $V^+_{n,m} \in \LSLPN$ and $V^-_{n,m} \in \LSLN$
by the Birkhoff decomposition
for $\left(G^-_m\right)^{-1} F^+_n$ near $(n, m)=(0, 0)$ as
\begin{equation}\label{eq:DFG}
\left(G^-_m\right)^{-1} F^+_n =
V^+_{n,m} \left(V^-_{n,m}\right)^{-1},
\end{equation}
and write $\hat{F}^m_n =  F^+_n V^-_{n,m} = G^-_m V^+_{n,m}$.
Then there exists a diagonal matrix $D^m_n$
such that $\left(D^0_0\right)^{-1} \hat{F}^m_n D^m_n$ is
the extended frame of a discrete indefinite affine sphere $f^m_n$.
In particular,
in case of discrete indefinite proper affine spheres $\left(H = -1\right)$,
the third column of the extended frame
$\left(D^0_0\right)^{-1} \hat{F}^m_n D^m_n$ directly gives
the position vector of $f^m_n$.
\end{Theorem}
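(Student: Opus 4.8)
The plan is to mirror the proof of Theorem~\ref{thm:Rep}, with the two partial derivatives replaced by the forward shifts $(n,m)\mapsto(n+1,m)$ and $(n,m)\mapsto(n,m+1)$, and with the decisive use of the fact that the coefficients $U^m_n$ and $V^m_n$ of \eqref{eq:discreteLax} are, by \eqref{eq:DU} and \eqref{eq:DV}, polynomials in $\lambda$ and in $\lambda^{-1}$ of degree three.

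For the direct statement I would first show the $m$-independence of $F^+$. From \eqref{decomposeFbyF+F-G-G+} we have $F^+_{n,m}=F^m_n(F^-_{n,m})^{-1}$, so that $(F^+_{n,m})^{-1}F^+_{n,m+1}=F^-_{n,m}V^m_n(F^-_{n,m+1})^{-1}$. The right-hand side lies in $\LSLN$, since $V^m_n$ is a polynomial in $\lambda^{-1}$ and $F^-\in\LSLN$, whereas the left-hand side lies in $\LSLPN$; because $\LSLPN\cap\LSLN=\{\id\}$, both sides are the identity and $F^+_{n,m+1}=F^+_{n,m}$. The symmetric computation with $U^m_n$ gives $G^-_{n+1,m}=G^-_{n,m}$. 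Writing $F^+_n$ and setting $\xi^+_n:=(F^+_n)^{-1}F^+_{n+1}=F^-_{n,m}U^m_n(F^-_{n+1,m})^{-1}$, the same degree count shows that $\xi^+_n$ is a polynomial in $\lambda$ of degree at most three; since $\xi^+_n\in\LSLPN$ its $\lambda^0$-coefficient is $\id$. The $Q$-twisting (Proposition~\ref{prop:twisted-Q}) fixes the off-diagonal pattern of each coefficient and the $T$-twisting \eqref{twisted-T} relates the surviving entries, producing exactly the shape \eqref{eq:DMC}. Comparing the $\lambda^1$-coefficient, computed from the nowhere-vanishing leading diagonal $\diag(i,i^{-1},1)$ of $F^-$ and the entries of $U^m_n$, identifies $\alpha_{n+1},\beta_{n+1}$ and exhibits $\alpha_{n+1}$ as a product of nowhere-vanishing factors; hence $\alpha_n\neq0$, and symmetrically $\rho_m\neq0$.

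For the converse, after solving \eqref{eq:DMC0}--\eqref{eq:DMC2} for $F^+_n,G^-_m$ and forming $\hat F^m_n$ through the decomposition \eqref{eq:DFG}, the engine of the proof is the pair of expressions for each discrete logarithmic derivative. From $\hat F^m_n=G^-_m V^+_{n,m}$ one obtains $\hat U^m_n:=(\hat F^m_n)^{-1}\hat F^m_{n+1}=(V^+_{n,m})^{-1}V^+_{n+1,m}\in\LSLP$, which has no negative powers of $\lambda$; from $\hat F^m_n=F^+_n V^-_{n,m}$ one obtains $\hat U^m_n=(V^-_{n,m})^{-1}\xi^+_n V^-_{n+1,m}$, which carries no power of $\lambda$ above the third. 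Hence $\hat U^m_n$ is a polynomial in $\lambda$ of degree at most three, which the twisting conditions force into the shape of $\overline U^m_n$ in Proposition~\ref{prop:extendtosurf}; the mirror computation presents $\hat V^m_n:=(\hat F^m_n)^{-1}\hat F^{m+1}_n$ as a polynomial in $\lambda^{-1}$ of degree at most three with the shape of $\overline V^m_n$. The compatibility $\hat U^m_n\hat V^m_{n+1}=\hat V^m_n\hat U^{m+1}_n$ is automatic, both products equalling $(\hat F^m_n)^{-1}\hat F^{m+1}_{n+1}$. The nowhere-vanishing hypotheses of Proposition~\ref{prop:extendtosurf} are precisely $\alpha_n,\rho_m\neq0$ together with the big-cell condition that makes $V^\pm$ well defined, so that proposition supplies the diagonal gauge $D^m_n$ for which $(D^0_0)^{-1}\hat F^m_n D^m_n$ is an extended frame. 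Finally, for $H=-1$ the third column of the frame \eqref{def:d-extended} equals $-Hf^m_n=f^m_n$, which gives the position vector directly.

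The step I expect to be the main obstacle is showing that $\hat U^m_n$ and $\hat V^m_n$ obey not merely the block pattern of Proposition~\ref{prop:twisted-Q} but the stronger internal relations built into $\overline U^m_n,\overline V^m_n$ of Proposition~\ref{prop:extendtosurf}---for instance that the $\lambda^3$-coefficient reduces to a single entry equal to a prescribed product of lower-order entries. These relations issue from the $T$-twisting \eqref{twisted-T} together with the unit-determinant constraint in $\SLC$, and the verification is delicate because $T$ degenerates when $H=0$; carrying it out uniformly in $H\in\{-1,0\}$ is the crux, and may alternatively be dispatched by expanding $(V^-_{n,m})^{-1}\xi^+_n V^-_{n+1,m}$ directly.
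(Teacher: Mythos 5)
Your outline reproduces the paper's strategy faithfully: the uniqueness half of the Birkhoff splitting plus the degree count in $\lambda$ gives the $m$-independence of $F^+$ and the cubic polynomial form of $\xi^+_n$, and the two twisting conditions force the shape \eqref{eq:DMC}; the converse is reduced to Proposition~\ref{prop:extendtosurf}. But the step where you claim $\alpha_{n+1}\neq 0$ is carried over from the smooth case and it fails in the discrete one. In Theorem~\ref{thm:Rep} the coefficient $U$ is \emph{affine} in $\lambda$, so the $\lambda$-coefficient of $\xi_+$ is exactly $I^0U^1J^0$, a product of invertible matrices. Here $U^m_n$ in \eqref{eq:DU} is \emph{cubic} in $\lambda$, so the $\lambda$-coefficient of $\xi^+_n=F^-_{n,m}U^m_n\big(F^-_{n+1,m}\big)^{-1}$ is the six-term sum
\begin{equation*}
X^1_n=I^0U^3J^2+I^1U^3J^1+I^2U^3J^0+I^0U^2J^1+I^1U^2J^0+I^0U^1J^0,
\end{equation*}
which involves the uncontrolled lower-order coefficients $I^1,I^2,J^1,J^2$ of $F^-$ and its inverse; it is \emph{not} a product of nowhere-vanishing factors. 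The paper's actual argument is different: it expresses $x^{32}_n$, $x^{12}_n$ and the $(2,2)$-entry of $X^0_n$ as multiples of one common quantity $C^m_n$, invokes the normalization $X^0_n(2,2)=1$ to conclude $C^m_n\neq0$, and then splits into the cases $H=0$ (read off from $x^{32}_n$) and $H=-1$ (read off from $x^{12}_n=-\frac{H}{2}(x^{32}_n)^2$). Without this, or an equivalent, your proof of $\alpha_n\neq0$, $\rho_m\neq0$ does not go through.

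In the converse you correctly flag the crux — that $\hat{U}^m_n$, $\hat{V}^m_n$ must satisfy the internal relations of $\overline{U}{}^m_n$, $\overline{V}{}^m_n$ in Proposition~\ref{prop:extendtosurf}, not just the block pattern — but you leave it undone, and the mechanism you name ($T$-twisting together with $\det=1$) is not the one that closes it. The paper derives these relations by equating the two expansions of the same transition matrix: the expansion via $\left(V^+_{n,m}\right)^{-1}V^+_{n+1,m}$, which is explicit in the coefficients $M^j$ of $V^+$, against the expansion via $\left(V^-_{n,m}\right)^{-1}\xi^+_n V^-_{n+1,m}$, which pins down the admissible block pattern; comparison yields $u^{23}_{n,m}=0$, hence $u^{31}_{n,m}=u^{13}_{n,m}u^{21}_{n,m}$, and $u^{22}_{n,m}=0$, hence $u^{11}_{n,m}=-\frac{H}{2}\left(u^{13}_{n,m}\right)^2u^{21}_{n,m}$, with a separate argument when $H=0$ (your last sentence about expanding $\left(V^-_{n,m}\right)^{-1}\xi^+_nV^-_{n+1,m}$ directly is indeed the right alternative, but it is asserted, not executed). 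Likewise, your claim that the nowhere-vanishing hypotheses of Proposition~\ref{prop:extendtosurf} are "precisely $\alpha_n,\rho_m\neq0$ plus the big-cell condition" is itself something to be proved: the paper must compute
\begin{equation*}
\left(u^{13}_{n,m}\right)^2=\frac{\epsilon^2\left(\alpha_{n+1}\right)^2}{k^m_n}
\left(\frac{1}{k^m_{n+1}}+\epsilon\beta_{n+1}\kappa^{12}_{n+1,m}\right)
\end{equation*}
and then show the last factor is nonzero by a further identity coming from the constant-coefficient normalization. Both gaps have the same source: the discrete Maurer-Cartan coefficients are cubic rather than linear in $\lambda^{\pm1}$, so the clean smooth-case computations do not transfer, and precisely those computations are the substance of the paper's proof.
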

\begin{proof}
Let $F$ be an extended frame,
and define $F^+$ and $F^-$ by \eqref{decomposeFbyF+F-G-G+}.
Therefore we have $F^+_{n,m} = F_{n,m} \left(F^-_{n,m}\right)^{- 1}$
and so that
\begin{equation*}
\left(F^+_{n,m}\right)^{-1} F^+_{n,m+1}
= F^-_{n,m} \left(F^m_n\right)^{-1}
F^{m+1}_n \big(F^-_{n,m+1}\big)^{-1}
= F^-_{n,m} V^m_n \big(F^-_{n,m+1}\big)^{-1},
\end{equation*}
where $V^m_n$ is given by \eqref{eq:DV}.
The left-hand side takes values in $\LSLPN$
and the right-hand side takes values in $\LSLN$.
Thus $\left(F^+_{n,m}\right)^{-1} F^+_{n,m+1} = \id$.
Similarly $\left(G^-_{n,m}\right)^{-1} G^-_{n+1,m}$ is identity matrix.
Therefore $F^+$ and $G^-$ do not depend on $m$ and $n$, respectively.

Next, let us compute $\left(F^+_n\right)^{-1} F^+_{n+1}$ and
$\left(G^-_m\right)^{-1} G^-_{m+1}$. It is straightforward to see that
\begin{equation*}
\left(F^+_n\right)^{-1} F^+_{n+1}
= F^-_{n,m} \left(F^m_n\right)^{-1} F^m_{n+1} \big(F^-_{n+1,m}\big)^{-1}
= F^-_{n,m} U^m_n \big(F^-_{n+1,m}\big)^{-1},
\end{equation*}
where $U^m_n$ is given by \eqref{eq:DU}.
 Since $U^m_n$ has the form
$U^m_n = \sum_{k=0}^3 \lambda^k U^k_{n,m}$ and
$F^-_{n,m}$ takes values in $\LSLN$, we have
\begin{equation*}
\xi^+_n
= \left(F^+_n\right)^{-1} F^+_{n+1}
= X^0_n + \lambda X^1_n + \lambda^2 X^2_n + \lambda^3 X^3_n.
\end{equation*}
With the expansions
\begin{align*}
F^-_{n,m} &= I^0_{n,m} + \lambda^{-1} I^1_{n,m} + \lambda^{-2} I^2_{n,m}
+ \lambda^{-3} I^3_{n,m} + \cdots,\\
\big(F^-_{n,m}\big)^{-1} &=
J^0_{n,m} + \lambda^{-1} J^1_{n,m} + \lambda^{-2} J^2_{n,m}
+ \lambda^{-3} J^3_{n,m} + \cdots,
\end{align*}
it is easy to see that $X^0_n$, $X^1_n$, $X^2_n$, $X^3_n$ are computed as
\begin{align*}
X^0_n =\;&
I^0_{n,m} U^3_{n,m} J^3_{n+1,m}
+ I^1_{n,m} U^3_{n,m} J^2_{n+1,m}
+ I^2_{n,m} U^3_{n,m} J^1_{n+1,m}
+ I^3_{n,m} U^3_{n,m} J^0_{n+1,m}\\
&+ I^0_{n,m} U^2_{n,m} J^2_{n+1,m}
+ I^1_{n,m} U^2_{n,m} J^1_{n+1,m}
+ I^2_{n,m} U^2_{n,m} J^0_{n+1,m}\\
&+ I^0_{n,m} U^1_{n,m} J^1_{n+1,m}
+ I^1_{n,m} U^1_{n,m} J^0_{n+1,m}
+ I^0_{n,m} U^0_{n,m} J^0_{n+1,m},\\
X^1_n =\;&
I^0_{n,m} U^3_{n,m} J^2_{n+1,m}
+ I^1_{n,m} U^3_{n,m} J^1_{n+1,m}
+ I^2_{n,m} U^3_{n,m} J^0_{n+1,m}\\
&+ I^0_{n,m} U^2_{n,m} J^1_{n+1,m}
+ I^1_{n,m} U^2_{n,m} J^0_{n+1,m}
+ I^0_{n,m} U^1_{n,m} J^0_{n+1,m},\\
X^2_n =\;& I^0_{n,m} U^3_{n,m} J^1_{n+1,m}
+ I^1_{n,m} U^3_{n,m} J^0_{n+1,m}
+ I^0_{n,m} U^2_{n,m} J^0_{n+1,m},\\
X^3_n =\;& I^0_{n,m} U^3_{n,m} J^0_{n+1,m}.
\end{align*}
From Proposition \ref{prop:twisted-Q},
every one of $\big\{I^0_{n,m}, J^0_{n,m}, I^3_{n,m}, J^3_{n,m}\big\}$,
$\big\{I^1_{n,m}, J^1_{n,m}\big\}$,
$\big\{I^2_{n,m}, J^2_{n,m}\big\}$ has the following form
\begin{equation*}
\diag \left(\ast, \ast, \ast\right),\quad
\begin{bmatrix}
0&*&0\\
0&0&*\\
*&0&0
\end{bmatrix},\quad
\begin{bmatrix}
0&0&*\\
*&0&0\\
0&*&0
\end{bmatrix},
\end{equation*}
respectively.
Since $\xi^+_n$ takes values in $\LSLPN$, thus $X^0_n = \id$.
Noticing that $U^1_{n,m}$, $U^2_{n,m}$, $U^3_{n,m}$ are given
by \eqref{eq:DU},
it readily follows that the coefficients $X^k_n$ have the form
\begin{gather*}
X^3_n = \diag \left(x^{11}_n, 0, 0\right), \quad
X^1_n =
\begin{bmatrix}
0 & 0 & x^{13}_n\\
x^{21}_n & 0 & 0\\
0 & x^{32}_n & 0
\end{bmatrix},\quad
X^2_n =
\begin{bmatrix}
0 & x^{12}_n & 0\\
0 & 0 & 0\\
x^{31}_n & 0 & 0
\end{bmatrix},
\end{gather*}
where $x^{ij}_n$ are some functions in $n$.
Thus we have
\begin{equation*}
\xi^+_n =
\begin{bmatrix}
1 + \lambda^3 x^{11}_n & \lambda^2 x^{12}_n & \lambda x^{13}_n\\
\lambda x^{21}_n & 1 & 0\\
\lambda^2 x^{31} & \lambda x^{32}_n & 1
\end{bmatrix}.
\end{equation*}
We now consider the twisted condition \eqref{twisted-T},
namely ${}^\mathrm{t} \xi \left(-\lambda\right) T \xi \left(\lambda\right) = T$.
It is easy to see that the twisted condition is equivalent to the system
\begin{equation*}
x^{13}_n = - H x^{32}_n,\quad
x^{31}_n = x^{21}_n x^{32}_n, \quad
x^{12}_n = - \frac{H}{2} \left(x^{32}_n\right)^2, \quad
x^{11}_n = -\frac{H}{2} \left(x^{32}_n\right)^2 x^{21}_n.
\end{equation*}
Thus we have
\begin{equation*}
\xi^+_n =
\begin{bmatrix}
1 - \lambda^3 \frac{H}{2} \left(x^{32}_n\right)^2 x^{21}_n &
- \lambda^2 \frac{H}{2} \left(x^{32}_n\right)^2 &
- \lambda H x^{32}_n\\
\lambda x^{21}_n & 1 & 0\\
\lambda^2 x^{21}_n x^{32}_n & \lambda x^{32}_n & 1
\end{bmatrix}
\end{equation*}
and the expression \eqref{eq:DMC}
on rewriting $x^{32}_n = \epsilon \alpha_{n + 1}$ and
$x^{21}_n = \epsilon \beta_{n + 1}$.
We write $P \left(i, j\right)$ for the $\left(i, j\right)$-entry
of a matrix $P$,
and show that $\alpha_{n+1}$ has no zeros as follows.
We compute $x^{32}_n$, $x^{12}_n$ and $X^0_n(2, 2)$
so that we have
\begin{align}
x^{32}_n &=\label{x^{32}}
\epsilon \left(-2 + \epsilon H I^1_{n, m}(3, 1)\right) C^m_n,\\
x^{12}_n &=\label{x^{12}}
\epsilon^2 H I^0_{n, m}(1, 1)\, C^m_n,\\
X^0_n(2, 2) &=\label{X^0(2,2)}
\left(- \frac{2}{I^0_{n, m}(1, 1)} + \epsilon^2 H I^2_{n, m}(2, 1)
- 2 \epsilon I^2_{n, m}(2, 3)\right) C^m_n,
\end{align}
where we set
\begin{equation*}
a^m_n = \frac{\omega^m_{n+1} g^m_{n+1}}{\omega^m_n} \neq0,\quad
b^m_n = A^m_{n+1} g^m_{n+1},\quad
C^m_n = - \frac{1 + a^m_n b^m_n J^0_{n+1, m}(1, 1) J^1_{n+1,m}(1, 2)}%
{2 a^m_n J^0_{n+1,m} (1,1)}.
\end{equation*}
Because of the condition $X^0_n(2,2) = 1$,
expressions \eqref{x^{32}} and \eqref{X^0(2,2)} imply that
$x^{32}_n$ has no zeros if $H = 0$.
When $H = -1$,
expressions \eqref{x^{12}} and \eqref{X^0(2,2)} imply $x^{32}_n \neq 0$
because $x^{12}_n = - (H/2) \left(x^{32}_n\right)^2$.
Similarly,
$\left(G^-_m\right)^{-1} G^-_{m+1}$ can be computed as in \eqref{eq:DMC2}
with a nowhere vanishing function $\rho_m$.

Conversely,
let $\left(F^+_n, G^-_m\right)$ be a pair of solutions of \eqref{eq:DMC0}
such that $F^+_0 = G^-_0 = \id$.
We write
\begin{equation*}
\xi^+_n = \sum_{j=0}^3 \lambda^j X^j_n,\quad
\xi^-_m = \sum_{j=0}^3 \lambda^{-j} Y^j_m,
\end{equation*}
where coefficient matrices $X^j$ and $Y^j$ are defined by
\eqref{eq:DMC} and \eqref{eq:DMC2}.
Consider the Birkhoff decomposition of $\left(G^-_m\right)^{-1} F^+_n$
near $\left(n, m\right) = \left(0, 0\right)$
as $\left(G^-_m\right)^{-1} F^+_n
= V^+_{n,m} \left(V^-_{n,m}\right)^{-1}$ and define $\hat{F}^m_n
= F^+_n V^-_{n,m}
= G^-_m V^+_{n,m}$.
We express
\begin{gather*}
V^-_{n,m} = \sum_{j=0}^\infty \lambda^{-j} K^j_{n,m},\quad
V^+_{n,m} = \sum_{j=0}^\infty \lambda^j M^j_{n,m}.
\end{gather*}
Their inverses are
\begin{gather*}
\left(V^-_{n,m}\right)^{-1} = \sum_{j=0}^\infty \lambda^{-j} L^j_{n,m},\quad
\left(V^+_{n,m}\right)^{-1} = \sum_{j=0}^\infty \lambda^j N^j_{n,m},
\end{gather*}
with $K^0_{n,m} L^0_{n,m} = \id$ and $M^0_{n,m} = N^0_{n,m} = \id$,
and for all $j \geq 0$ it holds that
\begin{equation*}
\sum_{k=0}^{j+1} K^k_{n,m} L^{j+1-k}_{n,m} = 0,\quad
\sum_{k=0}^{j+1} M^k_{n,m} N^{j+1-k}_{n,m} = 0.
\end{equation*}
Namely the matrices $L^{j+1}$ and $N^{j+1}$ are computed as
\begin{align*}
L^1_{n,m} &=
- L^0_{n,m} K^1_{n,m} L^0_{n,m},\\
L^2_{n,m} &= - L^0_{n,m}
\left(K^2_{n,m} - K^1_{n,m} L^0_{n,m} K^1_{n,m}\right) L^0_{n,m},\\
L^3_{n,m} &=
- L^0_{n,m}
\left(K^3_{n,m}
- K^1_{n,m} L^0_{n,m} K^2_{n,m}
- K^2_{n,m} L^0_{n,m} K^1_{n,m}\right.\\
&\qquad\qquad
\left.
+ K^1_{n,m} L^0_{n,m} K^1_{n,m} L^0_{n,m} K^1_{n,m}\right) L^0_{n,m},\\
N^1_{n,m} &= - M^1_{n,m},\\
N^2_{n,m} &= - M^2_{n,m} + \left(M^1_{n,m}\right)^2,\\
N^3_{n,m} &= - M^3_{n,m} + M^1_{n,m} M^2_{n,m}
+ M^2_{n,m} M^1_{n,m} - \left(M^1_{n,m}\right)^3,
\end{align*}
and so forth.
Further,
from the twisted condition \eqref{twisted-T},
it holds that
\begin{equation*}
\left(-1\right)^j {}^\mathrm{t} L^j_{n,m} T = T K^j_{n,m},\quad
\left(-1\right)^j {}^\mathrm{t} N^j_{n,m} T = T M^j_{n,m}
\end{equation*}
for all $j \geq 0$.
In particular,
setting $j=0$,
we have that
\begin{equation*}
K^0_{n,m} = \diag \left(k^m_n, 1/k^m_n , 1\right),\quad
L^0_{n,m} = 
\left(K^0_{n,m}\right)^{-1},
\end{equation*}
where $k$ is some sequence which has no zeros.
For higher $j$, we have that
\begin{align*}
K^1_{n,m} &=
\begin{bmatrix}
0 & \kappa^{12}_{n,m} & 0\\
0 & 0 & - \frac{H}{k^m_n} \kappa^{31}_{n,m}\\
\kappa^{31}_{n,m} & 0 & 0
\end{bmatrix},\\
K^2_{n,m} &=
\begin{bmatrix}
0 & 0 & H \left(k^m_n \kappa^{32}_{n,m}
- \kappa^{12}_{n,m} \kappa^{31}_{n,m}\right)\\
- \frac{H}{2 k^m_n} \left(\kappa^{31}_{n,m}\right)^2 & 0 & 0\\
0 & \kappa^{32}_{n,m} & 0
\end{bmatrix},\\
K^3_{n,m} &=
\diag \left(\left(k^m_n\right)^2 \kappa^{22}_{n,m}
+ \frac{H}{2} \kappa^{31}_{n,m} \left(2 k^m_n \kappa^{32}_{n,m}
- \kappa^{12}_{n,m} \kappa^{31}_{n,m}\right),\
\kappa^{22}_{n,m},\
\kappa^{33}_{n,m}\right),\\
M^1_{n,m} &=
\begin{bmatrix}
0 & 0 & - H \mu^{32}_{n,m}\\
\mu^{21}_{n,m} & 0 & 0\\
0 & \mu^{32}_{n,m} & 0
\end{bmatrix},\\
M^2_{n,m} &=
\begin{bmatrix}
0 & - \frac{H}{2} \left(\mu^{32}_{n,m}\right)^2 & 0\\
0 & 0 & H \left(\mu^{31}_{n,m} - \mu^{21}_{n,m} \mu^{32}_{n,m}\right)\\
\mu^{31}_{n,m} & 0 & 0
\end{bmatrix},\\
M^3_{n,m} &=
\diag \left(\mu^{11}_{n,m},\
\mu^{11}_{n,m}
+ \frac{H}{2} \mu^{32}_{n,m} \left(2 \mu^{31}_{n,m}
- \mu^{21}_{n,m} \mu^{32}_{n,m}\right),\
\mu^{33}_{n,m}\right),
\end{align*}
and so on.
Here $\kappa^{ij}$ and $\mu^{ij}$ are some sequences in $n$, $m$.
Now we are ready to compute the Maurer-Cartan form of $\hat{F}$.
As for $\big(\hat{F}^m_n\big)^{-1} \hat{F}^m_{n+1}$,
we have
\begin{align}
\big(\hat{F}^m_n\big)^{-1} \hat{F}^m_{n+1}
&=\label{eq:dMC-Fhat-Uhat-1}
\left(V^+_{n,m}\right)^{-1} V^+_{n+1,m},\\
\big(\hat{F}^m_n\big)^{-1} \hat{F}^m_{n+1}
&=\label{eq:dMC-Fhat-Uhat-2}
\left(V^-_{n,m}\right)^{-1} \xi^+_n V^-_{n+1,m}.
\end{align}
Comparing these two expressions,
it readily follows that there exist matrices such that
\begin{align*}
\big(\hat{F}^m_n\big)^{-1} \hat{F}^m_{n+1}
= \id + \lambda \hat{U}^1_{n,m}
+ \lambda^2 \hat{U}^2_{n,m} + \lambda^3 \hat{U}^3_{n,m}.
\end{align*}
Similarly,
from
\begin{align}
\big(\hat{F}^m_n\big)^{-1} \hat{F}^{m+1}_n
&=\label{eq:dMC-Fhat-Vhat-1}
\left(V^-_{n,m}\right)^{-1} V^-_{n,m+1},\\
\big(\hat{F}^m_n\big)^{-1} \hat{F}^{m+1}_n
&=\label{eq:dMC-Fhat-Vhat-2}
\left(V^+_{n,m}\right)^{-1} \xi^-_m V^+_{n,m+1},
\end{align}
we have that
\begin{equation*}
\big(\hat{F}^m_n\big)^{-1} \hat{F}^{m+1}_n
= \hat{V}^0_{n,m} + \lambda^{-1} \hat{V}^1_{n,m}
+ \lambda^{-2} \hat{V}^2_{n,m} + \lambda^{-3} \hat{V}^3_{n,m}.
\end{equation*}
From \eqref{eq:dMC-Fhat-Uhat-2} and \eqref{eq:dMC-Fhat-Vhat-2},
it readily follows that the coefficient matrices are of the form
\begin{gather}
\hat{U}^1_{n,m}
=\label{eq:dMC-Fhat-Uhat-2:123}
\begin{bmatrix}
0 & 0 & \ast\\
\ast & 0 & 0\\
0 & \ast & 0
\end{bmatrix},\quad
\hat{U}^2_{n,m}
=
\begin{bmatrix}
0 & \ast & 0\\
0 & 0 & 0\\
\ast & 0 & 0
\end{bmatrix},\quad
\hat{U}^3_{n,m}
=
\diag \left(\ast, 0, 0\right),\\
\hat{V}^1_{n,m}
=\label{eq:dMC-Fhat-Vhat-2:123}
\begin{bmatrix}
0 & \ast & 0\\
0 & 0 & \ast\\
\ast & 0 & 0
\end{bmatrix},\quad
\hat{V}^2_{n,m}
=
\begin{bmatrix}
0 & 0 & 0\\
\ast & 0 & 0\\
0 & \ast & 0
\end{bmatrix},\quad
\hat{V}^3_{n,m}
=
\diag \left(0, \ast, 0\right),
\end{gather}
and $\hat{V}^0_{n,m}$ is diagonal.
On the other hand,
from \eqref{eq:dMC-Fhat-Uhat-1},
we have
\begin{align}
\hat{U}^1_{n,m}
&=\nonumber
M^1_{n+1,m} + N^1_{n,m}\\
&=\nonumber
M^1_{n+1,m} - M^1_{n,m}\\
&=\nonumber
\begin{bmatrix}
0 & 0 & - H u^{13}_{n,m}\\
u^{21}_{n,m} & 0 & 0\\
0 & u^{13}_{n,m} & 0
\end{bmatrix},\\
\hat{U}^2_{n,m}
&=\nonumber
M^2_{n+1,m} + N^1_{n,m} M^1_{n+1,m} + N^2_{n,m}\\
&=\nonumber
M^2_{n+1,m} - M^2_{n,m} - M^1_{n,m} \hat{U}^1_{n,m}\\
&=\label{eq:dMC-Fhat-Uhat-1:2}
\begin{bmatrix}
0 & - \frac{H}{2} \left(u^{13}_{n,m}\right)^2 &  0\\
0 & 0 & H u^{23}_{n,m}\\
u^{31}_{n,m} & 0 & 0
\end{bmatrix},\\
\hat{U}^3_{n,m}
&=\nonumber
M^3_{n+1,m} + N^1_{n,m} M^2_{n+1,m}
+ N^2_{n,m} M^1_{n+1,m} + N^3_{n,m}\\
&=\nonumber
M^3_{n+1,m} - M^3_{n,m} - M^1_{n,m} \hat{U}^2_{n,m}
- M^2_{n,m} \hat{U}^1_{n,m}\\
&=\label{eq:dMC-Fhat-Uhat-1:3}
\diag \left(u^{11}_{n,m}, u^{22}_{n,m}, \ast\right),
\end{align}
where
\begin{align*}
u^{13}_{n,m} &= \mu^{32}_{n+1,m} - \mu^{32}_{n,m},\\
u^{21}_{n,m} &= \mu^{21}_{n+1,m} - \mu^{21}_{n,m},\\
u^{23}_{n,m} &= \mu^{31}_{n+1,m} - \mu^{31}_{n,m}
- \mu^{32}_{n+1,m} u^{21}_{n,m},\\
u^{31}_{n,m} &= \mu^{31}_{n+1,m} - \mu^{31}_{n,m}
- \mu^{32}_{n,m} u^{21}_{n,m},\\
u^{11}_{n,m} &= \mu^{11}_{n+1,m} - \mu^{11}_{n,m}
+ \left(H/2\right) \mu^{32}_{n,m}
\left(\mu^{32}_{n+1,m} + u^{13}_{n,m}\right) u^{21}_{n,m},\\
u^{22}_{n,m} &= \mu^{11}_{n+1,m} - \mu^{11}_{n,m}
+ \left(H/2\right) \mu^{32}_{n+1,m}
\left(\mu^{31}_{n+1,m} - \mu^{31}_{n,m} + u^{23}_{n,m}\right).
\end{align*}
Similarly,
from \eqref{eq:dMC-Fhat-Vhat-1},
we have
\begin{align*}
\hat{V}^0_{n,m} &= L^0_{n,m} K^0_{n,m+1}
= \diag
\left(\frac{k^{m+1}_n}{k^m_n},
\frac{k^m_n}{k^{m+1}_n},
1\right),
\end{align*}
and hence we have that
\begin{align}
\hat{V}^1_{n,m}
&=\nonumber
L^0_{n,m} K^1_{n,m+1} + L^1_{n,m} K^0_{n,m+1}\\
&=\nonumber
L^0_{n,m} \left(K^1_{n,m+1} - K^1_{n,m} \hat{V}^0_{n,m}\right)\\
&=\nonumber
\begin{bmatrix}
0 & v^{12}_{n,m} & 0\\
0 & 0 & - H v^{23}_{n,m}\\
\frac{k^{m+1}_n}{k^m_n} v^{23}_{n,m} & 0 & 0
\end{bmatrix},\\
\hat{V}^2_{n,m}
&=\nonumber
L^0_{n,m} K^2_{n,m+1} + L^1_{n,m} K^1_{n,m+1} + L^2_{n,m} K^0_{n,m+1}\\
&=\nonumber
L^0_{n,m} \left(K^2_{n,m+1} - K^2_{n,m} \hat{V}^0_{n,m}
- K^1_{n,m} \hat{V}^1_{n,m}\right)\\
&=\label{eq:dMC-Fhat-Vhat-1:2}
\begin{bmatrix}
0 & 0 & H v^{13}_{n,m}\\
- \frac{H}{2} \frac{k^{m+1}_n}{k^m_n}
\left(v^{23}_{n,m}\right)^2 & 0 & 0\\
0 & v^{32}_{n,m} & 0
\end{bmatrix},\\
\hat{V}^3_{n,m}
&=\nonumber
L^0_{n,m} K^3_{n,m+1} + L^1_{n,m} K^2_{n,m+1}
+ L^2_{n,m} K^1_{n,m+1} + L^3_{n,m} K^0_{n,m+1}\\
&=\nonumber
L^0_{n,m} \left(K^3_{n,m+1} - K^3_{n,m} \hat{V}^0_{n,m}
- K^2_{n,m} \hat{V}^1_{n,m}
- K^1_{n,m} \hat{V}^2_{n,m}\right)\\
&=\label{eq:dMC-Fhat-Vhat-1:3}
\diag \left(v^{11}_{n,m}, v^{22}_{n,m}, \ast\right),
\end{align}
where
\begin{align*}
v^{12}_{n,m} &=
\frac{\kappa^{12}_{n,m+1}}{k^m_n}
- \frac{\kappa^{12}_{n,m}}{k^{m+1}_n},\\
v^{23}_{n,m} &= \frac{k^m_n}{k^{m+1}_n} \kappa^{31}_{n,m+1}
- \kappa^{31}_{n,m},\\
v^{13}_{n,m} &=
\frac{k^{m+1}_n}{k^m_n}
\left(\kappa^{32}_{n,m+1}
- \frac{k^m_n}{k^{m+1}_n} \kappa^{32}_{n,m}\right)
- \kappa^{31}_{n,m+1}
v^{12}_{n,m},\\
v^{32}_{n,m} &=
\kappa^{32}_{n,m+1}
- \frac{k^m_n}{k^{m+1}_n} \kappa^{32}_{n,m}
- \kappa^{31}_{n,m}
 v^{12}_{n,m},\\
v^{11}_{n,m} &=
\frac{k^{m+1}_n}{k^m_n}
\left(
k^{m+1}_n \kappa^{22}_{n,m+1} - k^m_n \kappa^{22}_{n,m}
+ \frac{H}{2} \kappa^{31}_{n,m+1}
\left(
\kappa^{32}_{n,m+1} - \frac{k^m_n}{k^{m+1}_n} \kappa^{32}_{n,m}
+ \frac{k^m_n}{k^{m+1}_n}
v^{13}_{n,m}\right)\right),\\
v^{22}_{n,m} &=
\frac{k^m_n}{k^{m+1}_n}
\left(k^{m+1}_n \kappa^{22}_{n,m+1} - k^m_n \kappa^{22}_{n,m}\right)
+ \frac{H}{2} \kappa^{31}_{n,m}
\left(\kappa^{32}_{n,m+1}
- \frac{k^m_n}{k^{m+1}_n} \kappa^{32}_{n,m}
+ v^{32}_{n,m}\right).
\end{align*}
For $H \neq 0$,
comparing \eqref{eq:dMC-Fhat-Uhat-1:2} and \eqref{eq:dMC-Fhat-Uhat-2:123},
we have $u^{23}_{n,m} = 0$,
which implies $u^{31}_{n,m} = u^{13}_{n,m} u^{21}_{n,m}$.
For $H=0$,
by using \eqref{eq:dMC-Fhat-Uhat-2},
we have $u^{31}_{n,m} = u^{13}_{n,m} u^{21}_{n,m}$.
Comparing \eqref{eq:dMC-Fhat-Uhat-1:3} and \eqref{eq:dMC-Fhat-Uhat-2:123},
we have $u^{22}_{n,m} = 0$,
which implies
\begin{align*}
u^{11}_{n,m}
&= \frac{H}{2}
\mu^{32}_{n,m}
\left(\mu^{32}_{n+1,m} + u^{13}_{n,m}\right) u^{21}_{n,m}
- \frac{H}{2} \mu^{32}_{n+1,m}
\left(\mu^{31}_{n+1,m} - \mu^{31}_{n,m}\right)\\
&= \frac{H}{2}
\left(\mu^{32}_{n+1,m} - u^{13}_{n,m}\right)
\left(\mu^{32}_{n+1,m} + u^{13}_{n,m}\right) u^{21}_{n,m}
- \frac{H}{2} \left(\mu^{32}_{n+1,m}\right)^2 u^{21}_{n,m}\\
&= - \frac{H}{2} \left(u^{13}_{n,m}\right)^2 u^{21}_{n,m}.
\end{align*}
For $H \neq 0$,
comparing \eqref{eq:dMC-Fhat-Vhat-1:2} and \eqref{eq:dMC-Fhat-Vhat-2:123},
we have $v^{13}_{n,m} = 0$,
which implies $v^{32}_{n,m} = v^{23}_{n,m} v^{12}_{n,m}$.
For $H=0$,
by using \eqref{eq:dMC-Fhat-Vhat-2},
we have $v^{32}_{n,m} = v^{23}_{n,m} v^{12}_{n,m}$.
Comparing \eqref{eq:dMC-Fhat-Vhat-1:3} and \eqref{eq:dMC-Fhat-Vhat-2:123},
we have $v^{11}_{n,m} = 0$,
which implies
\begin{align*}
v^{22}_{n,m}
&= - \frac{H}{2}
\frac{k^m_n}{k^{m+1}_n}
\kappa^{31}_{n,m+1}
\left(
\kappa^{32}_{n,m+1} - \frac{k^m_n}{k^{m+1}_n} \kappa^{32}_{n,m}
\right)
+ \frac{H}{2} \kappa^{31}_{n,m}
\left(\kappa^{32}_{n,m+1}
- \frac{k^m_n}{k^{m+1}_n} \kappa^{32}_{n,m}
+ v^{32}_{n,m}\right)\\
&= - \frac{H}{2}
\left(\frac{k^m_n}{k^{m+1}_n} \kappa^{31}_{n,m+1}
- \kappa^{31}_{n,m}\right)
\left(
\kappa^{32}_{n,m+1} - \frac{k^m_n}{k^{m+1}_n} \kappa^{32}_{n,m}
\right)
+ \frac{H}{2} \kappa^{31}_{n,m}  v^{32}_{n,m}\\
&= - \frac{H}{2}
 v^{23}_{n,m}
\left(
 v^{32}_{n,m} + \kappa^{31}_{n,m} v^{12}_{n,m}
\right)
+ \frac{H}{2} \kappa^{31}_{n,m} v^{32}_{n,m}\\
&= - \frac{H}{2} \left(v^{23}_{n,m}\right)^2 v^{12}_{n,m}.
\end{align*}
Finally,
again from \eqref{eq:dMC-Fhat-Uhat-2} and \eqref{eq:dMC-Fhat-Vhat-2},
we have
\begin{align*}
\left(u^{13}_{n,m}\right)^2 &=
\frac{\epsilon^2 \left(\alpha_{n+1}\right)^2}{k^m_n}
\left(\frac{1}{k^m_{n+1}} + \epsilon \beta_{n+1} \kappa^{12}_{n+1,m}\right),\\
\left(v^{23}_{n,m}\right)^2 &=
\frac{k^m_n}{k^{m+1}_n}
\delta^2 \left(\rho_{m+1}\right)^2
\left(1
+ \delta \sigma_{m+1}
\mu^{21}_{n,m+1}
\right).
\end{align*}
On the other hand,
a straightforward computation shows that $(2, 2)$-entry
of the constant coefficient with respect to $\lambda$
for $(\hat F^m_n)^{-1} \hat F^m_{n+1}$
and $(1, 1)$-entry of the constant coefficient with respect to $\lambda$
for $(\hat F^m_n)^{-1} \hat F^{m+1}_{n}$ can be respectively computed as
\begin{gather*}
1 = \left(\frac{1}{2}
+ \frac{\epsilon \alpha H \kappa^{31}_{n, m}}{4 k^m_n}\right)
\left(\frac{1}{k^m_{n+1}}
+ \epsilon \beta_{n+1} \kappa_{n+1, m}^{12}\right),\\
0 \neq \frac{k^{m+1}_n}{k^{m}_n}
= \left(1 + \frac{\delta \rho H}{2} \mu^{32}_{n,m}\right)^2
\left(1+ \delta \sigma_{m+1} \mu^{21}_{n,m+1}\right).
\end{gather*}
Thus $1/k^m_{n+1} + \epsilon \beta_{n+1} \kappa^{12}_{n+1,m}$
and $1 + \delta \sigma_{m+1} \mu^{21}_{n,m+1}$ have no zeros near $(0, 0)$,
which implies that $u^{13}_{n, m}$ and $v^{23}_{n, m}$
never vanish near $(n, m)= (0, 0)$.

Thus,
coefficient matrices $\hat{U}^j_{n,m}$, $\hat{V}^j_{n,m}$ satisfy
the assumption of Proposition \ref{prop:extendtosurf}.
Therefore by Proposition \ref{prop:extendtosurf}
there exists a diagonal gauge $D^m_n$ such that
$\left(D^0_0\right)^{-1} \hat{F}^m_n D^m_n$ is the extended frame
for some discrete indefinite affine sphere.
\end{proof}
\begin{Remark}
In case of discrete proper affine spheres $\left(H = -1\right)$,
the functions $A^m_n$ and $B^m_n$ do not have simple expressions among
the functions $\alpha_n, \beta_n, \rho_m$ and $\sigma_m$.
On the other hand,
in case of discrete improper affine spheres $\left(H = 0\right)$,
they are simply represented as \eqref{explicit:d-omegaAB} below.
\end{Remark}

\subsection{discrete indefinite improper affine spheres}

For the rest of this paper,
we shall be concerned only with discrete indefinite
improper affine spheres $\left(H = 0\right)$.
Equations in \eqref{eq:DGC-AB} are simplified as
\begin{equation*}
A^{m+1}_{n+1} = A^m_{n+1},\quad
B^{m+1}_{n+1} = B^{m+1}_n,
\end{equation*}
which indicate that $A$ and $B$ depend only on $n$ and $m$, respectively.
Hence we write $A_n$ for $A^m_n$ and $B_m$ for $B^m_n$.
Thus the discrete Liouville equation \eqref{eq:DGC-omega} is written as
\begin{equation}\label{eq:dLiouville}
\omega^{m+1}_{n+1} \omega^m_n - \omega^m_{n+1} \omega^{m+1}_n
+ \epsilon \delta A_{n+1} B_{m+1} = 0.
\end{equation}
We now introduce a notation of summation of a sequence $x$ as
\begin{equation*}
{\sum}_k^n\, x_k
=
\begin{cases}
\sum_{k=1}^n x_k & \left(n \geq 1\right)\\
0 & \left(n=0\right)\\
- \sum_{k=n+1}^0 x_k & \left(n \leq -1\right).
\end{cases}
\end{equation*}
It holds for all integers $n$ that
\begin{gather*}
{\textstyle\sum}_k^n x_k - {\textstyle\sum}_k^{n-1} x_k = x_n,\\
{\textstyle\sum}_k^{n-1} x_k - {\textstyle\sum}_k^n x_{k-1} = - x_0,
\end{gather*}
and hence
${\sum}_k^n \left(x_k - x_{k-1}\right) = x_n - x_0$.
In particular we have a formula of summation by parts
\begin{equation}\label{eq:summertionbyparts}
{\textstyle\sum}_k^n x_k \left(y_k - y_{k-1}\right)
= - x_0 y_0
+ x_n y_n - {\textstyle\sum}_k^n \left(x_k - x_{k-1}\right) y_{k-1},
\end{equation}
which holds for all integers $n$.
Then the ordinary difference equations \eqref{eq:DMC0} can be
explicitly computed as follows.
\begin{Lemma}
Let $H = 0$.
Then the pair of solutions $\left(F^+_n,\, G^-_m\right)$
to the system \eqref{eq:DMC0}--\eqref{eq:DMC2}
with the initial condition $F^+_0 = G^-_0 = \id$ is explicitly given by
\begin{equation}\label{eq:DESol}
F^+_n =
\begin{bmatrix}
1 & 0 & 0\\
b_n \lambda & 1 & 0\\
c_n \lambda^2 & a_n \lambda & 1
\end{bmatrix},\quad
G^-_m =
\begin{bmatrix}
1 & s_m \lambda^{-1} & 0\\
0 & 1 & 0\\
r_m \lambda^{-1} & t_m \lambda^{-2} & 1
\end{bmatrix},
\end{equation}
where $a$, $b$, $c$, $r$, $s$, $t$ are defined as
\begin{gather}
a_n = \epsilon {\sum}_k^n \alpha_{k},\quad
b_n = \epsilon {\sum}_k^n \beta_{k},\quad
c_n = \epsilon {\sum}_k^{n} a_{k} \beta_{k},\label{def:d-abc}\\
r_m = \delta {\sum}_k^m \rho_{k},\quad
s_m = \delta {\sum}_k^m \sigma_{k},\quad
t_m = \delta {\sum}_k^{m} r_{k} \sigma_{k}.\label{def:d-rst}
\end{gather}
Moreover,
assume $1 - b_n s_m \neq 0$,
and define $V^+_{n,m} \in \LSLPN$ and $V^-_{n,m} \in \LSLN$
by the Birkhoff decomposition of $\left(G^-_m\right)^{-1} F^+_n$ as
\begin{equation}\label{eq:DdFG}
\left(G^-_m\right)^{-1} F^+_n
= V^+_{n,m} \left(V^-_{n,m}\right)^{-1}.
\end{equation}
Then $V^\pm$ are explicitly given as
\begin{align}
V^+_{n,m}
&=\label{eq:d-VPM:P}
\begin{bmatrix}
1 & 0 & 0\\
b_n \left(1 - b_n s_m\right)^{-1} \lambda & 1 & 0\\
c_n \left(1 - b_n s_m\right)^{-1} \lambda^2 &
\left(a_n - s_m
\left(a_n b_n - c_n\right)\right) \lambda & 1
\end{bmatrix},\\
V^-_{n,m}
&=\label{eq:d-VPM:M}
\begin{bmatrix}
\left(1 - b_n s_m\right)^{-1} & s_m \lambda^{-1} & 0\\
0 & 1 - b_n s_m & 0\\
\left(r_m + b_n t_m
\left(1 - b_n s_m\right)^{-1}\right) \lambda^{-1} &
t_m \lambda^{-2} & 1
\end{bmatrix}.
\end{align}
\end{Lemma}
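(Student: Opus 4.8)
The plan is to follow the same three-step strategy as in the proof of the smooth counterpart, Lemma \ref{lem:solution}, replacing the integrals by the summation operator ${\sum}_k^n$ and its elementary telescoping property recorded just before the statement. First I would verify that the explicit matrices $F^+_n$, $G^-_m$ in \eqref{eq:DESol} solve the difference equations \eqref{eq:DMC0} with coefficients \eqref{eq:DMC}--\eqref{eq:DMC2} specialized to $H=0$. This reduces to forming the product $F^+_n \xi^+_n$ and matching entries against $F^+_{n+1}$: the off-diagonal entries reproduce the one-step increments $b_{n+1}-b_n=\epsilon\beta_{n+1}$ and $a_{n+1}-a_n=\epsilon\alpha_{n+1}$, while the $(3,1)$-entry forces $c_{n+1}-c_n=\epsilon a_{n+1}\beta_{n+1}$. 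All three follow at once from the identity ${\sum}_k^n x_k-{\sum}_k^{n-1}x_k=x_n$ together with the definitions \eqref{def:d-abc}, after substituting $a_{n+1}=a_n+\epsilon\alpha_{n+1}$. The computation for $G^-_m$ is identical using \eqref{def:d-rst}, and the initial condition $F^+_0=G^-_0=\id$ holds because all six sequences vanish at index $0$.

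Second, for the factorization I would read off from the $\lambda$-expansions in \eqref{eq:d-VPM:P}--\eqref{eq:d-VPM:M} that $V^+_{n,m}$ is polynomial in $\lambda$ with constant term $\id$, hence lies in $\LSLPN$, and that $V^-_{n,m}$ involves only nonpositive powers of $\lambda$, hence lies in $\LSLN$; the twisted conditions \eqref{twisted-T}--\eqref{twisted-Q} hold by the same power-versus-position pattern that governs $F^+_n$ and $G^-_m$. The role of the hypothesis $1-b_n s_m\neq0$ is precisely to make the entries of $V^-_{n,m}$ and of $\left(V^-_{n,m}\right)^{-1}$ well defined, that is, to place $\left(G^-_m\right)^{-1}F^+_n$ in the big cell of Theorem \ref{thm:Birkhoff}.

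Third, rather than compute $\left(G^-_m\right)^{-1}F^+_n$ and factor it, I would verify the equivalent product identity $F^+_n V^-_{n,m}=G^-_m V^+_{n,m}$ by a single direct multiplication on each side. By the uniqueness part of Theorem \ref{thm:Birkhoff}, exhibiting any pair $(V^+_{n,m},V^-_{n,m})\in\LSLPN\times\LSLN$ with $\left(G^-_m\right)^{-1}F^+_n=V^+_{n,m}\left(V^-_{n,m}\right)^{-1}$ already identifies the Birkhoff decomposition, so this product check is all that remains and establishes \eqref{eq:DdFG}. The whole verification is mechanical; the only delicate point, and the place I would be careful, is the cancellation in the $(1,1)$- and $(3,2)$-entries. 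There the two sides agree only after using $b_n s_m+(1-b_n s_m)=1$ for the $(1,1)$-entry and the identity $a_n(1-b_n s_m)+c_n s_m=a_n-s_m(a_n b_n-c_n)$ to match the $(3,2)$-entry of $V^+_{n,m}$ against the one produced by $F^+_n V^-_{n,m}$. Everything else is bookkeeping in the variables $a_n,b_n,c_n,r_m,s_m,t_m$ and the factor $1-b_n s_m$.
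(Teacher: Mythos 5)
Your proposal is correct and follows essentially the same route as the paper's own (very terse) proof: direct verification that the matrices \eqref{eq:DESol} satisfy \eqref{eq:DMC0}--\eqref{eq:DMC2} with $H=0$ and the initial condition, followed by checking the product identity $F^+_n V^-_{n,m} = G^-_m V^+_{n,m}$ with $V^\pm$ visibly in $\LSLPN$ and $\LSLN$, which by uniqueness in Theorem \ref{thm:Birkhoff} yields \eqref{eq:DdFG}. The algebraic details you flag (the telescoping identities $a_{n+1}-a_n=\epsilon\alpha_{n+1}$, $c_{n+1}-c_n=\epsilon a_{n+1}\beta_{n+1}$, and the cancellations $b_n s_m + (1-b_n s_m)=1$ and $a_n(1-b_n s_m)+c_n s_m = a_n - s_m(a_n b_n - c_n)$ in the $(1,1)$- and $(3,2)$-entries) are exactly the ones the paper leaves to the reader, and they check out.
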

\begin{proof}
It is easy to check that matrices \eqref{eq:DESol} satisfy the system
\eqref{eq:DMC0}--\eqref{eq:DMC2} with $H = 0$ and $F^+_0 = G^-_0 = \id$.
The decomposition \eqref{eq:DdFG} is also easily checked.
\end{proof}
\begin{Theorem}
Let $\epsilon$, $\delta$ be positive numbers,
and $\alpha$, $\beta$, $\rho$, $\sigma$ be functions in one variable,
and $F^+$, $G^-$, $V^+$, $V^-$ be the loops given by
\eqref{eq:DESol}, \eqref{eq:d-VPM:P}, \eqref{eq:d-VPM:M}.
Define $a$, $b$, $c$, $r$, $s$, $t$ by \eqref{def:d-abc}, \eqref{def:d-rst},
and $\hat F$ by $\hat{F}^m_n = F^+_n V^-_{n,m} = G^-_m V^+_{n,m}$.
We assume that sequences $\alpha$, $\rho$ and $1 - b s$ have no zeros.
Then there exists a diagonal matrix $D^m_n$ such that
$\left(D^0_0\right)^{-1} \hat{F}^m_n D^m_n$ is the extended frame
of some discrete indefinite improper affine sphere $f$,
whose data solving the discrete Liouville equation \eqref{eq:dLiouville}
are given as
\begin{equation}\label{explicit:d-omegaAB}
\omega^m_n =\left(1 - b_n s_m\right) \alpha_{n+1} \rho_{m+1},\quad
A_n = \alpha_{n+1} \alpha_n \beta_n,\quad
B_m = \rho_{m+1} \rho_m \sigma_m.
\end{equation}
Moreover,
the associated family of $f$ is given by the representation formula
\begin{equation}\label{formula:representation-d-improper}
f^m_n =
\begin{bmatrix}
\lambda a_n + \lambda^{-2} \left(r_m s_m - t_m\right)\\
\lambda^2 \left(a_n b_n - c_n\right) + \lambda^{-1} r_m\\
a_n r_m
- \left(a_n b_n - c_n\right) \left(r_m s_m - t_m\right)
+ \lambda^3 \epsilon {\sum}_k^n \alpha_k c_{k-1}
+ \lambda^{-3} \delta {\sum}_k^m \rho_k t_{k-1}
\end{bmatrix},
\end{equation}
where $\lambda \in \R^\times$.
All discrete indefinite improper affine spheres are locally
constructed in this way.
\end{Theorem}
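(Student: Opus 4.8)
The plan is to transcribe the argument of the smooth improper case (the proof leading to \eqref{formula:representation-improper} and Corollary \ref{cor:repformula}) into the difference setting, replacing $\partial_u,\partial_v$ by the forward shifts $n\mapsto n+1$, $m\mapsto m+1$ and integration by the summation $\sum_k^n$, and then feeding the outcome into Proposition \ref{prop:extendtosurf}. Since $F^+_n$ depends only on $n$ and $G^-_m$ only on $m$, the two presentations $\hat F^m_n = F^+_n V^-_{n,m} = G^-_m V^+_{n,m}$ furnish the discrete Maurer--Cartan forms at once:
\[
(\hat F^m_n)^{-1}\hat F^m_{n+1} = (V^+_{n,m})^{-1} V^+_{n+1,m} =: \hat U^m_n, \qquad
(\hat F^m_n)^{-1}\hat F^{m+1}_n = (V^-_{n,m})^{-1} V^-_{n,m+1} =: \hat V^m_n.
\]
First I would compute $\hat U^m_n$ and $\hat V^m_n$ explicitly from the closed forms \eqref{eq:d-VPM:P}--\eqref{eq:d-VPM:M}; because $V^+$ is lower unitriangular (so the product truncates at order $\lambda^2$) and $V^-$ has the corresponding $\LSLN$-shape, both products are elementary and should come out exactly in the template \eqref{eq:DUVarb} of Proposition \ref{prop:extendtosurf} specialised to $H=0$, with the $\lambda^3$ coefficients $\overline{U}{}^3_{n,m}=\overline{V}{}^3_{n,m}=0$ and the remaining coefficients carrying the diagonal/off-diagonal supports listed there.

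From this form I read off the scalar sequences $u^{21},u^{22},u^{13}$ and $v^{11},v^{12},v^{23}$; the nowhere-vanishing requirements on $u^{13},u^{22},v^{11},v^{23}$ in Proposition \ref{prop:extendtosurf} translate, through the explicit entries of $V^\pm$, precisely into the standing hypotheses that $\alpha$, $\rho$ and $1-bs$ have no zeros. The compatibility $\hat U^m_n \hat V^m_{n+1} = \hat V^m_n \hat U^{m+1}_n$ demanded by the proposition is automatic here, since $\hat F\colon \Z^2\to\LSL$ is a genuinely defined map with $\hat F^m_{n+1}=\hat F^m_n\hat U^m_n$ and $\hat F^{m+1}_n=\hat F^m_n\hat V^m_n$. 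Proposition \ref{prop:extendtosurf} then produces a diagonal gauge $D^m_n$ (of the shape $\diag(d,d^{-1},1)$) for which $(D^0_0)^{-1}\hat F^m_n D^m_n$ is the extended frame of a discrete indefinite improper affine sphere, and its recipe for $\omega,A,B$ in terms of the $u^{ij},v^{ij}$ (recall $g^m_n=1$ when $H=0$ by \eqref{defn:g}) should reduce, after substitution, to \eqref{explicit:d-omegaAB}; that these data solve the discrete Liouville equation \eqref{eq:dLiouville} needs no separate verification, being inherited from the compatibility of $\hat U,\hat V$.

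For the representation formula \eqref{formula:representation-d-improper} I would pass to the moving frame exactly as in the smooth proof. Multiplying the extended frame on the right by the discrete analogue of \eqref{def:d-extended}, i.e.\ by $\diag(\lambda,\lambda^{-1}\omega^m_n,1)$, yields $\tilde F^m_n$, which I would compute in closed form as the product $G^-_m V^+_{n,m}$ with the two diagonal gauges $D^m_n$ and $\diag(\lambda,\lambda^{-1}\omega^m_n,1)$ (together with the constant left factor $(D^0_0)^{-1}$), using \eqref{eq:DESol} and \eqref{eq:d-VPM:P}. Since $\tilde F^m_n = [\,(f^m_{n+1}-f^m_n)/\epsilon,\ (f^{m+1}_n-f^m_n)/\delta,\ \xi_0\,]$ with $\xi_0={}^\mathrm{t}[0,0,1]$, the sphere $f^m_n$ is recovered by summing the first column in $n$ with $\sum_k^n$ (up to an $m$-dependent sequence), and that sequence is pinned down by matching its $m$-difference against the second column, mirroring the integration-and-matching step of the smooth argument; the base-point normalisation $a_0=b_0=c_0=r_0=s_0=t_0=0$ fixes the remaining constant. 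Finally, the claim that \emph{all} discrete indefinite improper affine spheres arise in this way follows from the converse part of Theorem \ref{maintheorem}: any extended frame yields normalised potentials \eqref{eq:DMC0}--\eqref{eq:DMC2} for some $\alpha,\beta,\rho,\sigma$, to which the present construction applies.

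I expect the genuine obstacle to lie in the discrete reconstruction of the height (third) component. Unlike smooth integration, the telescoping of $\sum_k^n$ must be arranged so that the summation-by-parts identity \eqref{eq:summertionbyparts}---the discrete counterpart of the manipulations $\alpha c = ap'-a'p$ and $\rho t = q'r-qr'$ used in Corollary \ref{cor:repformula}---reproduces exactly the shifted sums $\epsilon\sum_k^n \alpha_k c_{k-1}$ and $\delta\sum_k^m \rho_k t_{k-1}$ appearing in \eqref{formula:representation-d-improper}, so that the index shifts $c_{k-1}$, $t_{k-1}$ together with the $m$-dependent summation constants are tracked consistently across the two columns of $\tilde F^m_n$.
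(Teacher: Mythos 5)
Your proposal is correct and follows essentially the same route as the paper's proof: explicit computation of the Maurer--Cartan forms of $\hat F$ from the closed forms of $V^\pm$, a diagonal gauge bringing the result into the form \eqref{eq:DU}--\eqref{eq:DV} with $H=0$ (which is exactly what Proposition \ref{prop:extendtosurf} packages, with $u^{13}=\epsilon\alpha_{n+1}(1-b_ns_m)$, $u^{22}=1$, $v^{11}=(1-b_ns_m)/(1-b_ns_{m+1})$, $v^{23}=\delta\rho_{m+1}$ reproducing \eqref{explicit:d-omegaAB}), and recovery of $f^m_n$ by telescoping sums of the columns of the moving frame combined with the summation-by-parts identities such as $a_nb_n-c_n=\epsilon\sum_k^n\alpha_k b_{k-1}$. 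The only cosmetic differences are that the paper computes the $n$-step Maurer--Cartan form as $(V^-_{n,m})^{-1}\xi^+_n V^-_{n+1,m}$ rather than $(V^+_{n,m})^{-1}V^+_{n+1,m}$ (the two agree), and carries out the gauge $d^m_n=(1-b_ns_m)\alpha_{n+1}$ directly instead of citing Proposition \ref{prop:extendtosurf} as a black box.
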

\begin{proof}
First a straightforward computation shows that Maurer-Cartan form of
$\hat{F}$ is computed as
\begin{align*}
\big(\hat{F}^m_n\big)^{-1} \hat{F}^m_{n+1}
&= \left(V^-_{n,m}\right)^{-1} \xi^+_n V^-_{n+1,m}\\
&=
\begin{bmatrix}
1 & 0 & 0\\
\frac{\beta_{n+1}}{\left(1 - b_n s_m\right)
\left(1 - b_{n+1} s_m\right)} \epsilon \lambda & 1 & 0\\
\frac{\alpha_{n+1} \beta_{n+1}}{1 - b_{n+1} s_m}
\left(\epsilon \lambda\right)^2 &
\alpha_{n+1} \left(1 - b_n s_m\right) \epsilon \lambda & 1
\end{bmatrix},\\
\big(\hat{F}^m_n\big)^{-1} \hat{F}^{m+1}_n
&= \left(V^+_{n,m}\right)^{-1} \xi^-_m V^+_{n,m+1}\\
&=
\begin{bmatrix}
\frac{1 - b_n s_m}{1 - b_n s_{m+1}} &
\sigma_{m+1} \delta \lambda^{-1} & 0\\
0 & \frac{1 - b_n s_{m+1}}{1 - b_n s_m} & 0\\
\frac{\rho_{m+1} \left(1 - b_n s_m\right)}%
{1 - b_n s_{m+1}} \delta \lambda^{-1} &
\rho_{m+1} \sigma_{m+1} \left(\delta \lambda^{-1}\right)^2 & 1
\end{bmatrix}.
\end{align*}
Next we take a diagonal gauge
$D^m_n = \diag \left(d^m_n, 1/d^m_n, 1 \right)$
so that the Maurer-Cartan form of $F^m_n = \hat{F}^m_n D^m_n$ is
\begin{align*}
\left(F^m_n\right)^{-1} F^m_{n+1}
&=
\begin{bmatrix}
\frac{d^m_{n+1}}{d^m_n} & 0 & 0\\
\frac{\beta_{n+1} d^m_{n+1} d^m_n}{\left(1 - b_n s_m\right)
\left(1 - b_{n+1} s_m\right)} \epsilon \lambda &
\frac{d^m_n}{d^m_{n+1}} & 0\\
\frac{\alpha_{n+1} \beta_{n+1} d^m_{n+1}}{1 - b_{n+1} s_m}
\left(\epsilon \lambda\right)^2 &
\frac{\alpha_{n+1} \left(1 - b_n s_m\right)}{d^m_{n+1}}
\epsilon \lambda & 1
\end{bmatrix},\\
\left(F^m_n\right)^{-1} F^{m+1}_n
&=
\begin{bmatrix}
\frac{d^{m+1}_n}{d^m_n}
\frac{1 - b_n s_m}{1 - b_n s_{m+1}} &
\frac{\sigma_{m+1}}{d^{m+1}_n d^m_n} \delta \lambda^{-1} & 0\\
0 & \frac{d^m_n}{d^{m+1}_n}
\frac{1 - b_n s_{m+1}}{1 - b_n s_m} & 0\\
\frac{\rho_{m+1} \left(1 - b_n s_m\right) d^{m+1}_n}%
{1 - b_n s_{m+1}} \delta \lambda^{-1} &
\frac{\rho_{m+1} \sigma_{m+1}}{d^{m+1}_n}
\left(\delta \lambda^{-1}\right)^2 & 1
\end{bmatrix}.
\end{align*}
This should be compared with \eqref{eq:DU}--\eqref{eq:DV} with $H=0$,
thus we have \eqref{explicit:d-omegaAB} and
\begin{equation*}
d^m_n = \left(1 - b_n s_m\right) \alpha_{n+1}.
\end{equation*}
To obtain the formula \eqref{formula:representation-d-improper},
we consider another diagonal gauge as introduced in
\eqref{def:d-extended}.
Namely,
setting $\tilde{F}^m_n = F^m_n \tilde{D}^m_n$ where
$\tilde{D}^m_n = \diag \left(\lambda,\, \lambda^{-1} \omega^m_n,\, 1\right)$,
we have
\begin{align}
\big(\tilde{F}^m_n\big)^{-1} \tilde{F}^m_{n+1}
&=\label{explicitformula-movingframe:U}
\begin{bmatrix}
\frac{\alpha_{n+2}}{\alpha_{n+1}}
\frac{1 - b_{n+1} s_m}{1- b_n s_m} & 0 & 0\\
\frac{\alpha_{n+2} \beta_{n+1} \epsilon}%
{\rho_{m+1} \left(1 - b_n s_m\right)} \lambda^3 & 1 & 0\\
\alpha_{n+2} \alpha_{n+1} \beta_{n+1} \epsilon^2 \lambda^3 &
\alpha_{n+1} \rho_{m+1} \left(1 - b_n s_m\right)
\epsilon & 1
\end{bmatrix}
= \tilde{U}^m_n,\\
\big(\tilde{F}^m_n\big)^{-1} \tilde{F}^{m+1}_n
&=\label{explicitformula-movingframe:V}
\begin{bmatrix}
1 & \frac{\sigma_{m+1} \rho_{m+2}}{\alpha_{n+1}
\left(1 - b_n s_m\right)}
\delta \lambda^{-3} & 0\\
0 & \frac{\rho_{m+2} \left(1 - b_n s_{m+1}\right)}%
{\rho_{m+1} \left(1 - b_n s_m\right)} & 0\\
\alpha_{n+1} \rho_{m+1}
\left(1 - b_n s_m\right) \delta &
\rho_{m+2} \rho_{m+1} \sigma_{m+1} \delta^2 \lambda^{-3} & 1
\end{bmatrix}
= \tilde{V}^m_n.
\end{align}
Of course this pair of matrices $\big(\tilde{U}^m_n,
\tilde{V}^m_n\big)$ accords with \eqref{dlax:tildeU} and
\eqref{dlax:tildeV}.
The frame $\tilde{F}$ can be computed explicitly as
\begin{equation*}
\tilde{F}^m_n
= G^-_m V^+_{n,m} D^m_n \tilde{D}^m_n
= \left[v^m_n, w^m_n, \xi_0\right],
\end{equation*}
where
\begin{gather*}
v^m_n =
\alpha_{n+1}
\begin{bmatrix}
\lambda\\
b_n \lambda^2\\
r_m - b_n \left(r_m s_m - t_m\right)
+ c_n \lambda^3
\end{bmatrix},\\
w^m_n =
\rho_{m+1}
\begin{bmatrix}
s_m \lambda^{-2}\\
\lambda^{-1}\\
a_n
- s_m \left(a_n b_n - c_n\right)
+ t_m \lambda^{-3}
\end{bmatrix}.
\end{gather*}
By definition of the moving frame,
we have
\begin{equation*}
v^m_n = \frac{f^m_{n+1} - f^m_n}{\epsilon},\quad
w^m_n = \frac{f^{m+1}_n - f^m_n}{\delta}.
\end{equation*}
Hence, noticing \eqref{eq:summertionbyparts}, $a_0 b_0 = 0$ and
the relations
\begin{align*}
a_n b_n - c_n
&= a_n b_n - \epsilon \textstyle{\sum}_k^n a_k \beta_k\\
&= a_n b_n - \textstyle{\sum}_k^n a_k \left(b_k - b_{k-1}\right)\\
&= \textstyle{\sum}_k^n \left(a_k - a_{k-1}\right) b_{k-1}\\
&= \epsilon \textstyle{\sum}_k^n \alpha_k b_{k-1}
\end{align*}
and $r_m s_m - t_m = \delta {\sum}_k^m \rho_k s_{k-1}$,
it follows for all integers $n$ and $m$ that
\begin{align*}
& f^m_n - f^0_0\\
&= \textstyle\sum_{k}^n \big(f^0_k - f^0_{k-1}\big)
+ \textstyle\sum_{l}^m \big(f^l_n - f^{l-1}_n\big)\\
&= \epsilon \textstyle\sum_{k}^n v^0_{k-1}
+ \delta \textstyle\sum_{l}^m w^{l-1}_n\\
&=
\epsilon \textstyle\sum_{k}^n
\alpha_k
\begin{bmatrix}
\lambda\\
b_{k-1} \lambda^2\\
c_{k-1} \lambda^3
\end{bmatrix}
+ \delta \textstyle\sum_{l}^m
\rho_l
\begin{bmatrix}
s_{l-1} \lambda^{-2}\\
\lambda^{-1}\\
a_n
- s_{l-1} \left(a_n b_n - c_n\right)
+ t_{l-1} \lambda^{-3}
\end{bmatrix}\\
&=
\begin{bmatrix}
\lambda a_n + \lambda^{-2} \left(r_m s_m - t_m\right)\\
\lambda^2 \left(a_n b_n - c_n\right) + \lambda^{-1} r_m\\
a_n r_m
- \left(a_n b_n - c_n\right) \left(r_m s_m - t_m\right)
+ \lambda^3 \epsilon {\sum}_k^n \alpha_k c_{k-1}
+ \lambda^{-3} \delta {\sum}_l^m \rho_l t_{l-1}
\end{bmatrix}.
\end{align*}
Thus
we have the representation formula \eqref{formula:representation-d-improper}
up to equiaffine transformations.
\end{proof}
\begin{Remark}
For given functions $A_n$, $B_m$ and
positive constants $\epsilon$, $\delta$,
it is known that a general solution to
the discrete Liouville equation \eqref{eq:dLiouville} can be expressed as
\begin{equation*}
\omega^m_n = \frac{\epsilon \left(p_0 +
{\sum}_k^n A_k \phi_k \phi_{k-1}\right)
+ \delta \left(q_0 + {\sum}_l^m B_l \psi_l \psi_{l-1}\right)}{\phi_n \psi_m},
\end{equation*}
where $\phi$ and $\psi$ are arbitrary functions with no zeros
in one variable,
and $p_0$ and $q_0$ are arbitrary constants.
See, for example, \cite{MR2749061}.
On the other hand,
our formula \eqref{explicit:d-omegaAB} tells that
a general solution also has an expression
\begin{equation*}
\omega^m_n
= \alpha_{n+1} \rho_{m+1} \left(1 - \epsilon \delta\,
{\sum}_k^n \frac{A_k}{\alpha_{k+1} \alpha_k}\,
{\sum}_l^m \frac{B_l}{\rho_{l+1} \rho_l}\right),
\end{equation*}
where $\alpha$ and $\rho$ are arbitrary functions with no zeros
in one variable.
As the most simple solutions,
by setting $\phi = \psi = \alpha = \rho = 1$ and $p_0 = q_0 = 0$,
these formulas give
an additive one $\omega^m_n = \epsilon {\sum}_k^n A_k
+ \delta {\sum}_l^m B_l$ and
a multiplicative one
$\omega^m_n = 1 - \epsilon \delta {\sum}_k^n A_k {\sum}_l^m B_l$,
respectively.
\end{Remark}
We conclude this paper with one of our main results,
which offers a representation formula using two discrete plane curves
for a discrete indefinite improper affine sphere.
\begin{Corollary}[Representation formula]
Fix positive numbers $\epsilon$ and $\delta$.
For maps $a, p\colon \epsilon \Z \to \R$ and
$q, r\colon \delta \Z \to \R$,
set
\begin{equation*}
\gamma^1_n =
\begin{bmatrix}
a_n\\
p_n
\end{bmatrix},\quad
\gamma^2_m =
\begin{bmatrix}
q_m\\
r_m
\end{bmatrix}.
\end{equation*}
Then the map
\begin{gather}
f^m_n =\label{formula:d-improper-AF-from-d-planecurves}
\begin{bmatrix}
\gamma^1_n + \gamma^2_m\\
z^m_n
\end{bmatrix},
\end{gather}
where
\begin{gather}
z^m_n =\label{formula:d-improper-AF-from-d-planecurves:z}
\det \left[\gamma^1_n, \gamma^2_m\right]
+ {\sum}_k^n \det \left[\gamma^1_{k-1}, \gamma^1_k\right]
- {\sum}_k^m \det \left[\gamma^2_{k-1}, \gamma^2_k\right]
\end{gather}
is a discrete indefinite improper affine sphere
with the affine normal ${}^\mathrm{t} \left[0, 0, 1\right]$.
Its data solving the discrete Liouville equation \eqref{eq:dLiouville} is
\begin{align*}
\omega^m_n &= \det
\left[\frac{\gamma^1_{n+1} - \gamma^1_n}{\epsilon},
\frac{\gamma^2_{m+1} - \gamma^2_m}{\delta}\right],\\
A_n &= \det
\left[\frac{\gamma^1_{n+1} - \gamma^1_n}{\epsilon},
- \frac{\gamma^1_n - \gamma^1_{n - 1}}{\epsilon^2}\right],\\
B_m &= \det
\left[- \frac{\gamma^2_m - \gamma^2_{m - 1}}{\delta^2},
\frac{\gamma^2_{m+1} - \gamma^2_m}{\delta}\right].
\end{align*}
Moreover the associated family of $f$ is given by the transformation
\begin{equation*}
\gamma^1 \mapsto
\begin{bmatrix}
\lambda & 0\\
0 & \lambda^2
\end{bmatrix} \gamma^1,\quad
\gamma^2 \mapsto
\begin{bmatrix}
\lambda^{-2} & 0\\
0 & \lambda^{-1}
\end{bmatrix} \gamma^2
\end{equation*}
where $\lambda \in \R^\times$.
Conversely all discrete
indefinite improper affine spheres can be
constructed in this way.
\end{Corollary}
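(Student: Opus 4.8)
The plan is to rephrase the representation formula \eqref{formula:representation-d-improper} in terms of two discrete plane curves, paralleling the continuous argument in the proof of Corollary \ref{cor:repformula}, with the summation-by-parts identity \eqref{eq:summertionbyparts} and telescoping playing the role of integration by parts. First I would introduce the sequences $p_n = a_n b_n - c_n$ and $q_m = r_m s_m - t_m$, so that, setting $\gamma^1_n = {}^\mathrm{t}[a_n, p_n]$ and $\gamma^2_m = {}^\mathrm{t}[q_m, r_m]$, the first two components of \eqref{formula:representation-d-improper} at $\lambda = 1$ become $\gamma^1_n + \gamma^2_m$, while the leading term $a_n r_m - p_n q_m$ of the third component is exactly $\det[\gamma^1_n, \gamma^2_m]$.

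The crux is a discrete Wronskian (Casoratian) identity, the discrete analogue of the smooth relations $\alpha c = a p' - a' p$ and $\rho t = q' r - q r'$. Concretely I would prove
\begin{equation*}
\epsilon \alpha_k c_{k-1} = a_{k-1} p_k - a_k p_{k-1} = \det[\gamma^1_{k-1}, \gamma^1_k],\quad
\delta \rho_k t_{k-1} = r_{k-1} q_k - r_k q_{k-1} = -\det[\gamma^2_{k-1}, \gamma^2_k],
\end{equation*}
using only the elementary recurrences $a_k - a_{k-1} = \epsilon \alpha_k$, $c_k - c_{k-1} = \epsilon a_k \beta_k$ (and their $\gamma^2$-counterparts) together with $p_n = \epsilon {\sum}_k^n \alpha_k b_{k-1}$, which was already established in the proof of the preceding theorem. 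Summing these identities over $k$ then converts the two $\lambda^{\pm 3}$ tail terms of \eqref{formula:representation-d-improper} into ${\sum}_k^n \det[\gamma^1_{k-1}, \gamma^1_k]$ and $-{\sum}_k^m \det[\gamma^2_{k-1}, \gamma^2_k]$, producing \eqref{formula:d-improper-AF-from-d-planecurves}--\eqref{formula:d-improper-AF-from-d-planecurves:z}. I expect the careful bookkeeping of the staggered indices (the appearance of $c_{k-1}, b_{k-1}$ rather than $c_k, b_k$) in this identity to be the main obstacle, since that index shift is precisely what distinguishes the discrete formula from a naive transcription of the smooth one.

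Next, to lift the restriction $\gamma^1_0 = \gamma^2_0 = {}^\mathrm{t}[0,0]$ forced by the initial conditions $a_0 = p_0 = q_0 = r_0 = 0$, I would apply a discrete equiaffine transformation of $f^m_n$ of the same shape as in the proof of Corollary \ref{cor:repformula}, translating $a, p, q, r$ by arbitrary constants $a_0, p_0, q_0, r_0$; a direct check shows the transformed frame has the identical form with $\gamma^1, \gamma^2$ now arbitrary discrete curves through prescribed initial points. This shows that \eqref{formula:d-improper-AF-from-d-planecurves} yields a discrete indefinite improper affine sphere and, combined with the last sentence of the preceding theorem, that every such sphere arises this way.

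Finally, the data formulas and the associated family are routine. From $p_{n+1} - p_n = \epsilon \alpha_{n+1} b_n$ and $q_{m+1} - q_m = \delta \rho_{m+1} s_m$ one obtains $(\gamma^1_{n+1} - \gamma^1_n)/\epsilon = \alpha_{n+1}\,{}^\mathrm{t}[1, b_n]$ and $(\gamma^2_{m+1} - \gamma^2_m)/\delta = \rho_{m+1}\,{}^\mathrm{t}[s_m, 1]$, whence the stated determinants reproduce $\omega^m_n = (1 - b_n s_m)\alpha_{n+1}\rho_{m+1}$, $A_n = \alpha_{n+1}\alpha_n\beta_n$ and $B_m = \rho_{m+1}\rho_m\sigma_m$ of \eqref{explicit:d-omegaAB}. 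The associated family is read off directly from the $\lambda$-weights in \eqref{formula:representation-d-improper}: the entries of $\gamma^1$ carry $\lambda, \lambda^2$ and those of $\gamma^2$ carry $\lambda^{-2}, \lambda^{-1}$, which is exactly the claimed scaling by $\diag(\lambda, \lambda^2)$ and $\diag(\lambda^{-2}, \lambda^{-1})$.
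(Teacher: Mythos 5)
Your proposal is correct and follows essentially the same route as the paper's proof: the substitution $p_n = a_n b_n - c_n$, $q_m = r_m s_m - t_m$, the discrete Wronskian identities $\epsilon\alpha_k c_{k-1} = a_{k-1}p_k - a_k p_{k-1}$ and $\delta\rho_k t_{k-1} = r_{k-1}q_k - r_k q_{k-1}$ (which the paper verifies by exactly the elementary recurrences you cite), and the equiaffine transformation removing the normalization $a_0 = p_0 = q_0 = r_0 = 0$. Your explicit check of the data $\omega$, $A$, $B$ via the difference vectors $\alpha_{n+1}\,{}^\mathrm{t}[1, b_n]$ and $\rho_{m+1}\,{}^\mathrm{t}[s_m, 1]$ is a correct supplement that the paper leaves implicit.
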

\begin{proof}
First, introducing functions $p_n=a_nb_n-c_n$ and $q_m= r_ms_m -t_m$,
we rephrase \eqref{formula:representation-d-improper} as
\begin{equation}\label{formula:representation-d-improper-2}
f^m_n =
\begin{bmatrix}
\lambda a_n + \lambda^{-2} q_m\\
\lambda^2 p_n + \lambda^{-1} r_m\\
 a_n r_m - p_n q_m + \lambda^3 {\sum}_k^n
\left(a_{k-1} p_k - a_k p_{k-1}\right)
- \lambda^{-3} {\sum}_k^m
\left(r_k q_{k-1}- r_{k-1} q_k\right)
\end{bmatrix},
\end{equation}
where we use the identities
\begin{align*}
\epsilon \alpha_k c_{k-1}
&= \left(a_k-a_{k-1}\right) c_{k-1} \\
&= a_k c_{k-1} + a_{k-1} \left(\epsilon a_k \beta_k - c_k\right)\\
&= a_k c_{k-1} + a_k a_{k-1}\left(b_k - b_{k-1}\right) - a_{k-1} c_k\\
&= a_{k-1} p_k - a_k p_{k-1},
\end{align*}
and $\delta \rho_k t_{k-1} = r_{k-1} q_{k} - r_k q_{k-1}$.
Note that $a_0= p_0= q_0 = r_0 =0$.
We then consider an equiaffine transformation of $f_n^m$ as
\begin{equation*}
\tilde f_n^m=
\begin{bmatrix}
1 & 0 & 0 \\
0 & 1 & 0 \\
\lambda^{-1} {\tilde r}_0 - \lambda^2 {\tilde p}_0 &
\lambda {\tilde a}_0 - \lambda^{-2} {\tilde q}_0 &
1
\end{bmatrix}
f_n^m +
\begin{bmatrix}
\lambda {\tilde a}_0 + \lambda^{-2} {\tilde q}_0 \\
\lambda^2 {\tilde p}_0 + \lambda^{-1} {\tilde r}_0 \\
{\tilde a}_0 {\tilde r}_0 - {\tilde p}_0 {\tilde q}_0
\end{bmatrix},
\end{equation*}
where ${\tilde a}_0$,
${\tilde r}_0$,
${\tilde p}_0$ and ${\tilde q}_0$ are some constants.
A straightforward computation shows that
\begin{equation*}
\tilde f^m_n =
\begin{bmatrix}
\lambda \tilde a_n + \lambda^{-2} \tilde q_m\\
\lambda^2 \tilde p_n + \lambda^{-1} \tilde r_m\\
 \tilde a_n \tilde r_m - \tilde p_n \tilde q_m + \lambda^3 {\sum}_k^n
\left(\tilde a_{k-1} \tilde p_k - \tilde a_k \tilde p_{k-1}\right)
- \lambda^{-3} {\sum}_k^m
\left(\tilde r_k \tilde q_{k-1} - \tilde r_{k-1} \tilde q_k\right)
\end{bmatrix},
\end{equation*}
where ${\tilde a}_n = a_n+ {\tilde a}_0$,
${\tilde p}_n = p_n + {\tilde p}_0$,
${\tilde q}_n = q_n + {\tilde q}_0$
and ${\tilde r} = r + {\tilde r}_0$.
Thus we obtain \eqref{formula:d-improper-AF-from-d-planecurves}
on writing
\begin{equation*}
\gamma^1_n =
\begin{bmatrix}
{\tilde a}_n \\
{\tilde p}_n
\end{bmatrix},\quad
\gamma^2_m =
\begin{bmatrix}
{\tilde q}_m\\
{\tilde r}_m
\end{bmatrix}.
\end{equation*}
Since $\gamma^1_n$ and $\gamma^2_m$ are arbitrary,
\eqref{formula:d-improper-AF-from-d-planecurves}
gives the all improper indefinite affine spheres.
\end{proof}
\begin{Remark}
We recall that the height function $z$
defined by \eqref{formula:improper-AF-from-planecurves:z}
satisfies $\pd{u} \pd{v} z \left(u, v\right)
= \det \left[\gamma_1' \left(u\right),
\gamma_2' \left(v\right)\right]$.
As a discrete analogue of this,
the sequence $z$ defined by
\eqref{formula:d-improper-AF-from-d-planecurves:z}
satisfies a difference equation
\begin{equation}\label{z:recurrent}
z^{m+1}_{n+1} - z^m_{n+1} - z^{m+1}_n + z^m_n
= \det \left[\gamma^1_{n+1} - \gamma^1_n,
\gamma^2_{m+1} - \gamma^2_m\right].
\end{equation}
In particular,
if $\epsilon = \delta$ and $\gamma^1_i = \gamma^2_i$ for all $i \in \Z$,
then $z$ satisfies $z^i_i = z^{i \pm 1}_i = 0$ for all $i \in \Z$,
and then every $z^{i \pm 2}_i$ can be fast computed
by using the recurrent relation \eqref{z:recurrent}.
Iterating this,
we can obtain all the values $z^m_n$ numerically.
Refer to Example \ref{example:d-circle,d-square,d-cpt}
for specific examples,
where we explicitly calculate $z$
as a function in $\left(n, m\right)$.
\end{Remark}
We fix positive numbers $\epsilon$ and $\delta$ arbitrarily
to illustrate examples of
discrete indefinite improper affine spheres
by taking several discrete curves.
\begin{Example}
Let $P_n$ and $R_m$ be arbitrary sequences which possibly depend on
$\epsilon$ and $\delta$ respectively.
We denote by $\Delta P$ and $\Delta R$ the forward differences of them,
that is,
\begin{equation*}
\Delta P_n = \frac{P_{n+1}-P_n}{\epsilon},\quad
\Delta R_m = \frac{R_{m+1}-R_m}{\delta}.
\end{equation*}
We substitute discrete curves
\begin{equation*}
\gamma^1_n =
\begin{bmatrix}
\epsilon n\\
\Delta P_n
\end{bmatrix},\quad
\gamma^2_m =
\begin{bmatrix}
\Delta R_m\\
\delta m
\end{bmatrix}
\end{equation*}
into the representation formula
\eqref{formula:d-improper-AF-from-d-planecurves},
and have a discrete indefinite improper affine sphere
\begin{align*}
f^m_n &=
\begin{bmatrix}
\epsilon n + \Delta R_m\\
\delta m + \Delta P_n\\
\left(\epsilon n + \Delta R_m\right)
\left(\delta m + \Delta P_n\right)
- 2 \left(\Delta P_n\right) \left(\Delta R_m\right)
- P_{n+1} - P_n
- R_{m+1} - R_m
\end{bmatrix}.
\end{align*}
Its data is
\begin{gather*}
\omega^m_n =
1 - \epsilon^{-2} \delta^{-2}
\left(P_{n+2} - 2 P_{n+1} + P_n\right)
\left(R_{m+2} - 2 R_{m+1} + R_m\right),\\
A_n = \epsilon^{-3}
\left(P_{n+2} - 3 P_{n+1} + 3 P_n - P_{n-1}\right),\\
B_m = \delta^{-3}
\left(R_{m+2} - 3 R_{m+1} + 3 R_m - R_{m-1}\right).
\end{gather*}
\end{Example}
\begin{Example}\label{example:d-circle,d-square,d-cpt}
We illustrate discrete counterparts to those surfaces
given in Example \ref{example:circle,square,cpt}.
Fix positive numbers $q_1$, $q_2$ arbitrarily and
introduce positive numbers
\begin{equation*}
\theta_1 = \frac{2}{\epsilon}
\arctan \left(\frac{\epsilon}{2} q_1\right),\quad
\theta_2 = \frac{2}{\delta}
\arctan \left(\frac{\delta}{2} q_2\right).
\end{equation*}
\begin{enumerate}
\item
First example is given by the discrete curves
\begin{gather*}
\gamma^1_n =
\begin{bmatrix}
\cos \left(\theta_1 \epsilon n\right)\\
\sin \left(\theta_1 \epsilon n\right)
\end{bmatrix},\quad
\gamma^2_m =
\begin{bmatrix}
\cos \left(\theta_2 \delta m\right)\\
\sin \left(\theta_2 \delta m\right)
\end{bmatrix}.
\end{gather*}
Substituting these into the representation formula,
we have
\begin{gather*}
f^m_n =
\begin{bmatrix}
\cos \left(\theta_1 \epsilon n\right)
+ \cos \left(\theta_2 \delta m\right)\\
\sin \left(\theta_1 \epsilon n\right)
+ \sin \left(\theta_2 \delta m\right)\\
- \sin \left(\theta_1 \epsilon n - \theta_2 \delta m\right)
+ n \sin \left(\theta_1 \epsilon\right)
- m \sin \left(\theta_2 \delta\right)
\end{bmatrix}.
\end{gather*}
The data solving the discrete Liouville equation
\eqref{eq:dLiouville} are given by constants
\begin{gather*}
A_n = \frac{8}{\epsilon^3}
\sin^3 \left(\frac{\epsilon}{2} \theta_1\right)
\cos \left(\frac{\epsilon}{2} \theta_1\right)
=
\frac{16 q_1^3}{\left(4 + \epsilon^2 q_1^2\right)^2},\\
B_m = - \frac{8}{\delta^3}
\sin^3 \left(\frac{\delta}{2} \theta_2\right)
\cos \left(\frac{\delta}{2} \theta_2\right)
=
- \frac{16 q_2^3}{\left(4 + \delta^2 q_2^2\right)^2},
\end{gather*}
and a sequence
\begin{equation*}
\begin{split}
\omega^m_n =\;&
\frac{4 q_1 q_2}
{\sqrt{4 + \epsilon^2 q_1^2} \sqrt{4 + \delta^2 q_2^2}}
\sin \left(\frac{\epsilon}{2} \theta_1 \left(2n+1\right)
- \frac{\delta}{2} \theta_2 \left(2m+1\right)\right).
\end{split}
\end{equation*}
Therefore $f$ is singular if $n$, $m$ satisfy
\begin{equation*}
\epsilon \theta_1 \left(2n+1\right)
\equiv \delta \theta_2 \left(2m+1\right)
\pmod{2 \pi}.
\end{equation*}
\begin{figure}[H]
\hfill
\hfill
\subfigure{\includegraphics[width=3cm,keepaspectratio]%
{./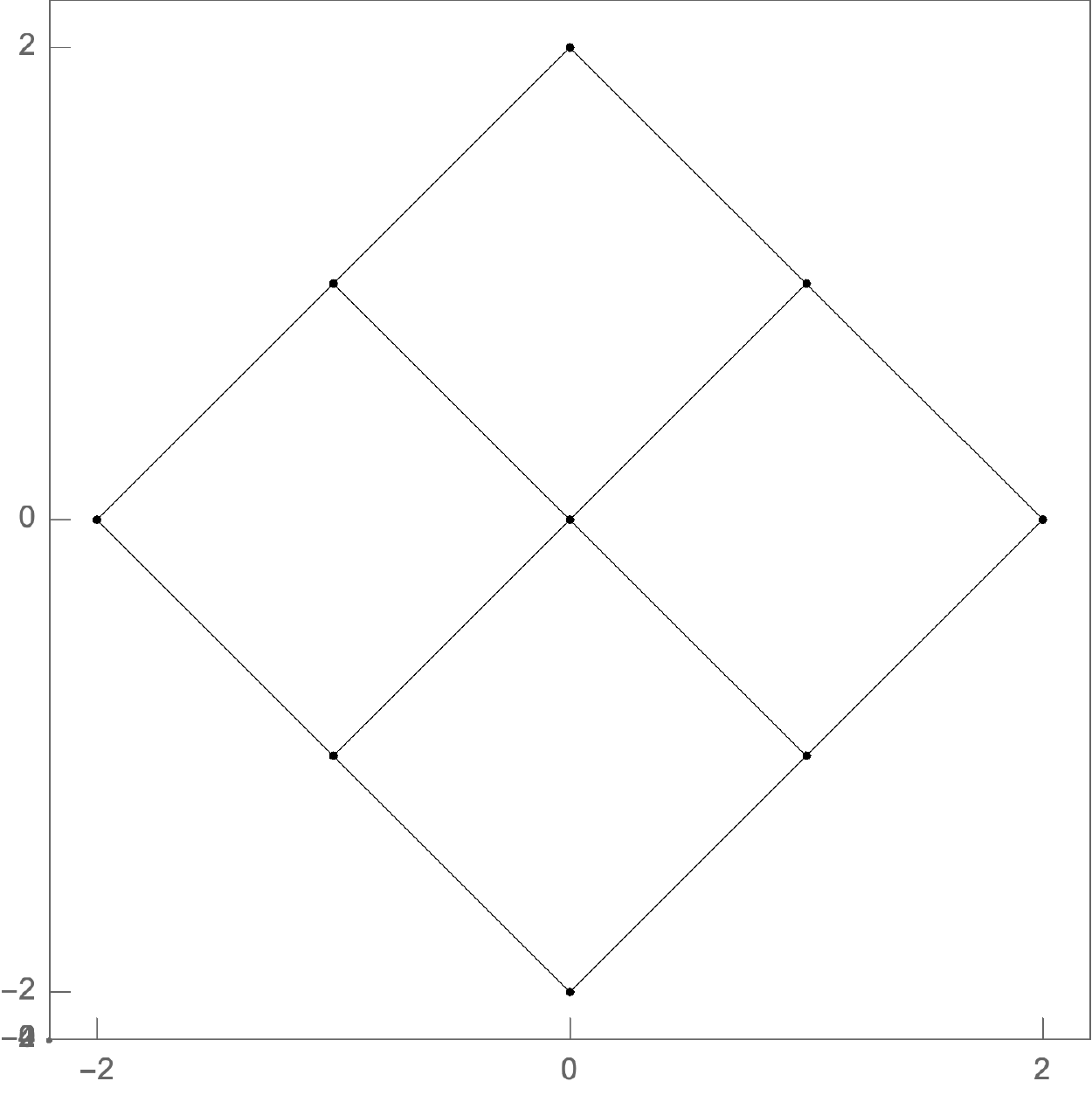}}
\hfill
\subfigure{\includegraphics[width=3cm,keepaspectratio]%
{./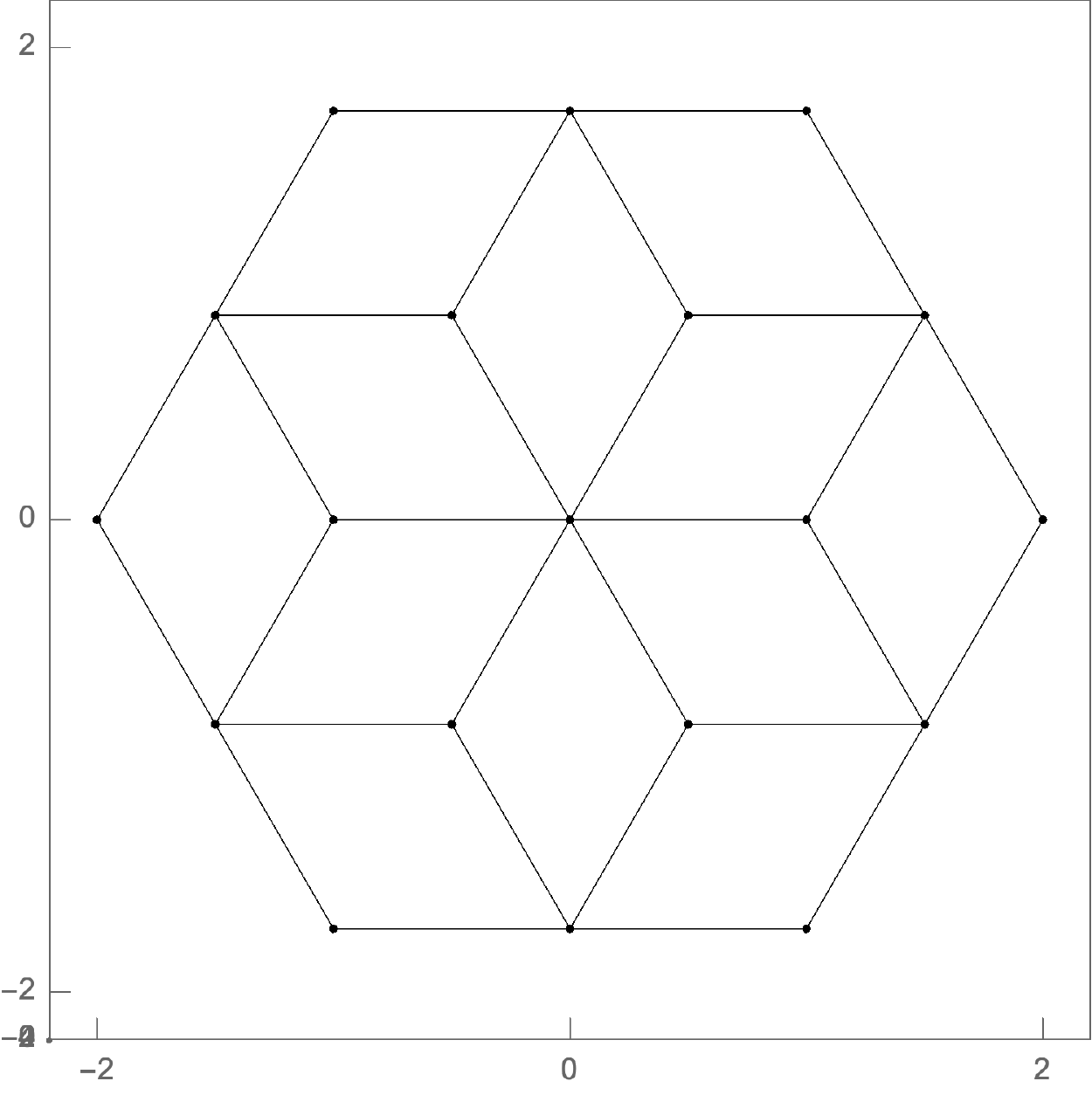}}
\hfill
\subfigure{\includegraphics[width=3cm,keepaspectratio]%
{./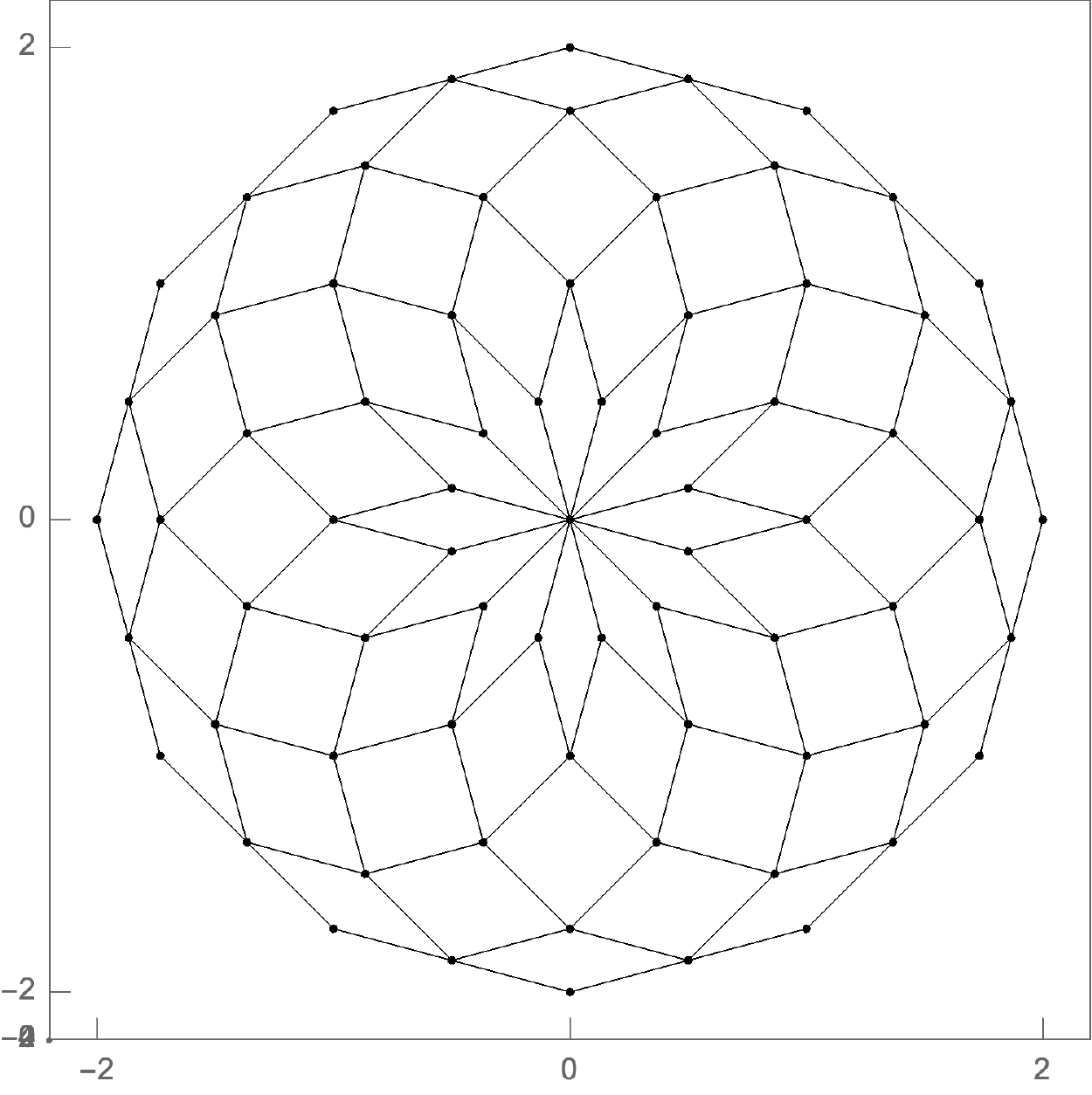}}
\hfill
\hfill

\hfill
\hfill
\subfigure[$q = 2\tan \left(\pi/4\right)$]%
{\includegraphics[height=6cm,keepaspectratio]%
{./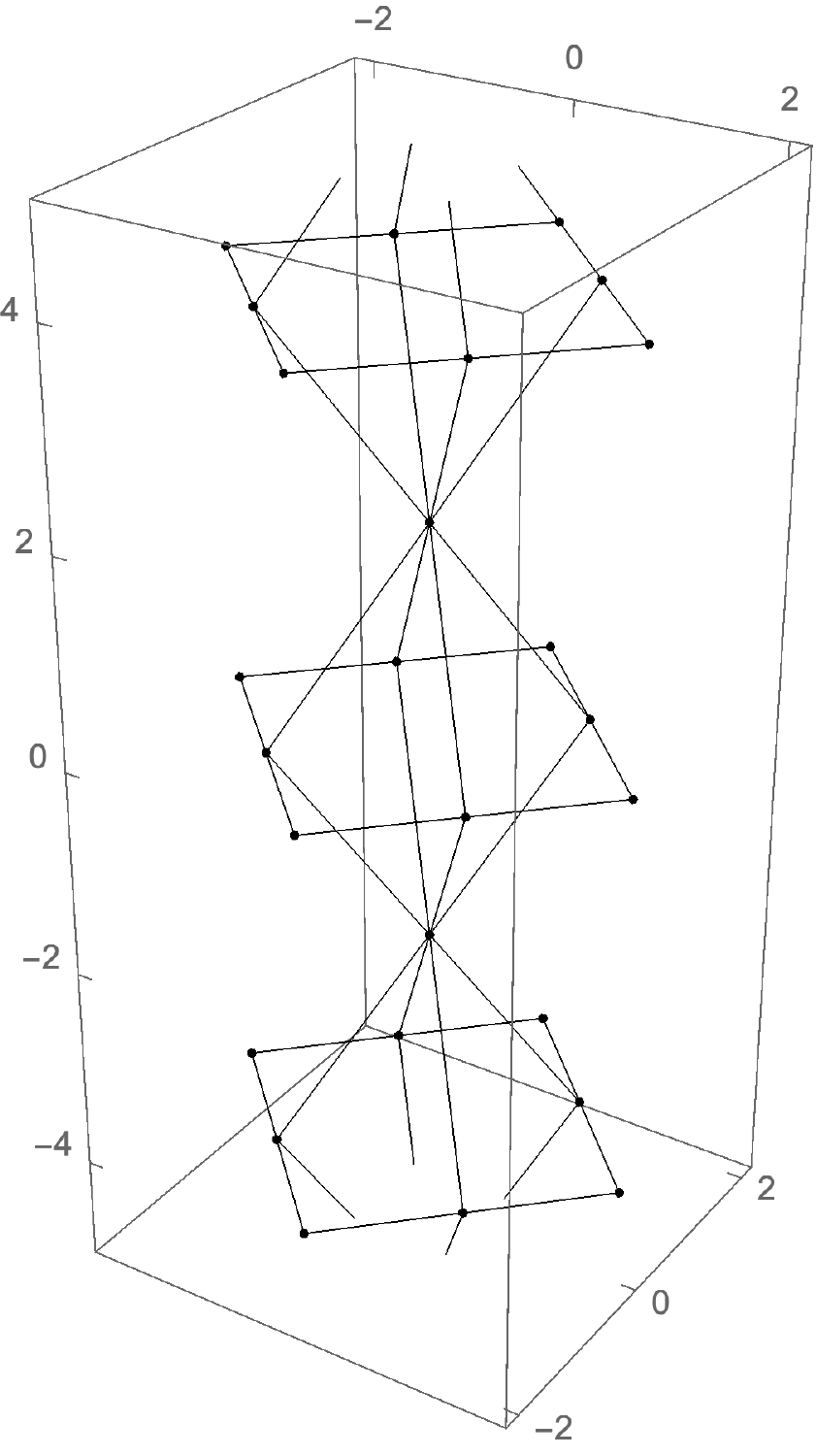}}
\hfill
\subfigure[$q = 2 \tan \left(\pi/6\right)$]%
{\includegraphics[height=6cm,keepaspectratio]%
{./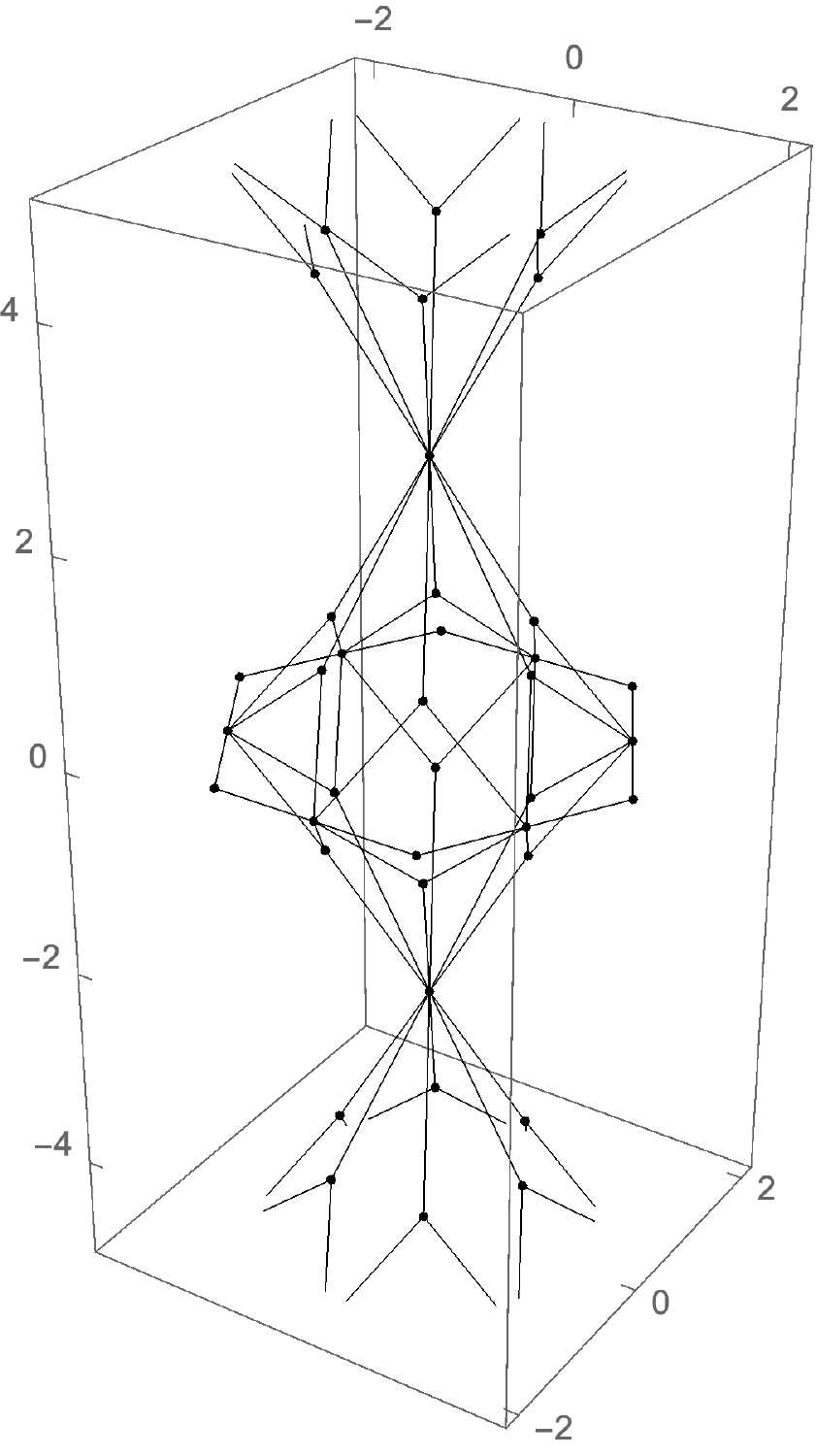}}
\hfill
\subfigure[$q = 2 \tan \left(\pi/12\right)$]%
{\includegraphics[height=6cm,keepaspectratio]%
{./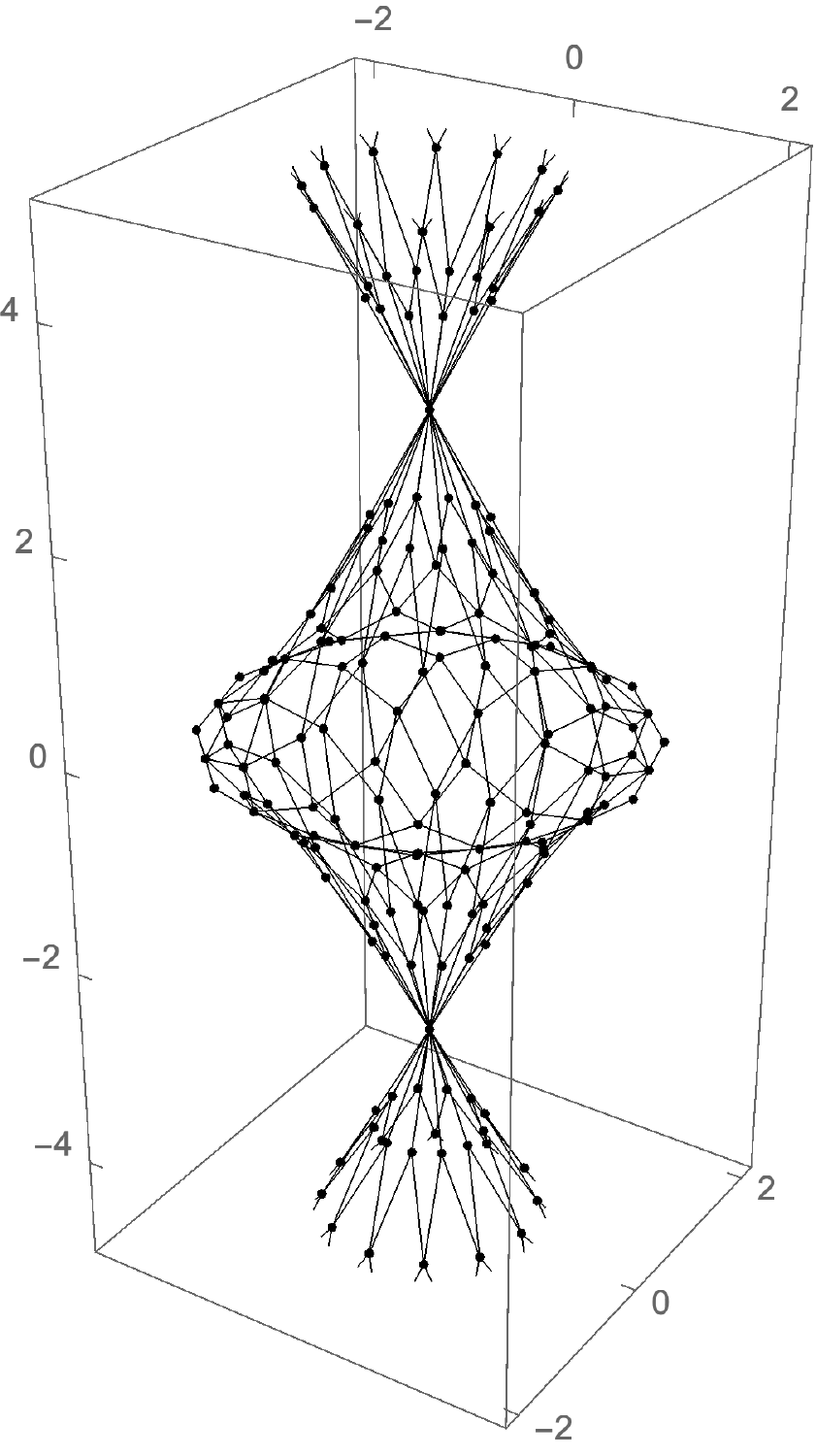}}
\hfill
\hfill
\caption{Discrete indefinite improper affine spheres $f^m_n$
with $\epsilon = \delta = 1$ and $q = q_1 = q_2$,
which exhibit cone points.
The figures in upper line show views from the top.}
\label{fig:d-exam2-1}
\end{figure}

\item
Second example is given by
\begin{gather*}
\gamma^1_n =
\begin{bmatrix}
\left|\cos \left(\theta_1 \epsilon n\right)\right|
\cos \left(\theta_1 \epsilon n\right)\\
\left|\sin \left(\theta_1 \epsilon n\right)\right|
\sin \left(\theta_1 \epsilon n\right)
\end{bmatrix},\quad
\gamma^2_m =
\begin{bmatrix}
\left|\cos \left(\theta_2 \delta m\right)\right|
\cos \left(\theta_2 \delta m\right)\\
\left|\sin \left(\theta_2 \delta m\right)\right|
\sin \left(\theta_2 \delta m\right)
\end{bmatrix}.
\end{gather*}
We assume that both the images of $\gamma^1$ and $\gamma^2$
contain four points
\begin{equation*}
\pm
\begin{bmatrix}
1\\
0
\end{bmatrix},\quad
\pm
\begin{bmatrix}
0\\
1
\end{bmatrix},
\end{equation*}
which we can always achieve by choosing $q_1$, $q_2$ appropriately.
By virtue of this assumption,
we are able to assume that $0 < \theta_1 \epsilon \leq \pi/2$ and
$\cos \left(\theta_1 \epsilon \left(n-1\right) \right)
\cos \left(\theta_1 \epsilon n\right) \geq 0$ and
$\sin \left(\theta_1 \epsilon \left(n-1\right) \right)
\sin \left(\theta_1 \epsilon n\right) \geq 0$.
Therefore the differences of $\gamma^1$ can be written into
simple forms as
\begin{gather*}
\gamma^1_n - \gamma^1_{n-1} =
2 \sin \left(\theta_1 \epsilon\right)
\begin{bmatrix}
- \left|\cos \frac{\theta_1 \epsilon \left(2n-1\right)}{2}\right|
\sin \frac{\theta_1 \epsilon \left(2n-1\right)}{2}\\
\left|\sin \frac{\theta_1 \epsilon \left(2n-1\right)}{2}\right|
\cos \frac{\theta_1 \epsilon \left(2n-1\right)}{2}
\end{bmatrix}.
\end{gather*}
Hence we have that
\begin{align*}
\det \left[\gamma^1_{n-1}, \gamma^1_n\right]
= \det \left[\gamma^1_n, \gamma^1_n - \gamma^1_{n-1}\right]
= \sin \left(\theta_1 \epsilon\right)
\left|\sin \left(\theta_1 \epsilon \left(2n-1\right)\right)\right|.
\end{align*}
Further, on choosing the parameter $q_1$ as
\begin{equation*}
q_1 = \frac{2}{\epsilon} \tan \frac{\pi}{4 N_1}
\end{equation*}
with a positive integer $N_1$,
which is equivalent to setting
$\theta_1 \epsilon = \pi/\left(2 N_1\right)$,
it holds for all $n \in \Z$ that
\begin{align*}
{\sum}_k^n
\det \left(\gamma^1_{k-1}, \gamma^1_k\right)
&=
{\sum}_k^n
\sin \frac{\pi}{2 N_1}
\left|\sin \frac{\left(2k-1\right) \pi}{2 N_1}\right|\\
&=
\left\lfloor \frac{n}{N_1}\right\rfloor
+ \frac{1}{2}
\left(1 - \left(-1\right)^{\left\lfloor n/{N_1}\right\rfloor}
\cos \frac{n \pi}{N_1}\right).
\end{align*}
Here $\left\lfloor u\right\rfloor$ is the floor of $u$,
that is,
the greatest integer less than or equal to $u$.
We apply the same discussion as above to $\gamma^2$,
and set $q_2 = \left(2/\delta\right)
\tan \left(\pi/\left(4 N_2\right)\right)$ to have
\begin{gather*}
f^m_n =
\begin{bmatrix}
\left|\cos \frac{n \pi}{2 N_1}\right|
\cos \frac{n \pi}{2 N_1}
+ \left|\cos \frac{m \pi}{2 N_2}\right|
\cos \frac{m \pi}{2 N_2}\\
\left|\sin \frac{n \pi}{2 N_1}\right|
\sin \frac{n \pi}{2 N_1}
+ \left|\sin \frac{m \pi}{2 N_2}\right|
\sin \frac{m \pi}{2 N_2}\\
z^m_n
\end{bmatrix},\\
\begin{split}
z^m_n =\;&
\left|\cos \textstyle\frac{n \pi}{2 N_1}
\sin \textstyle\frac{m \pi}{2 N_2}\right|
\cos \textstyle\frac{n \pi}{2 N_1}
\sin \textstyle\frac{m \pi}{2 N_2}
- \left|\sin \textstyle\frac{n \pi}{2 N_1}
\cos \textstyle\frac{m \pi}{2 N_2}\right|
\sin \textstyle\frac{n \pi}{2 N_1}
\cos \textstyle\frac{m \pi}{2 N_2}\\
&+
\left\lfloor \textstyle\frac{n}{N_1}\right\rfloor
- \left\lfloor \textstyle\frac{m}{N_2}\right\rfloor
- \textstyle\frac{1}{2}
\left(\left(-1\right)^{\left\lfloor n/{N_1}\right\rfloor}
\cos \textstyle\frac{n \pi}{N_1}
- \left(-1\right)^{\left\lfloor m/{N_2}\right\rfloor}
\cos \textstyle\frac{m \pi}{N_2}\right).
\end{split}
\end{gather*}
Its data is
\begin{gather*}
A_n =
\begin{cases}
\frac{2}{\epsilon^3}
\sin^4 \frac{\pi}{2 N_1} & n \in N_1 \Z\\
0 & n \not\in N_1 \Z,
\end{cases}\\
B_m =
\begin{cases}
- \frac{2}{\delta^3}
\sin^4 \frac{\pi}{2 N_2} & m \in N_2 \Z\\
0 & m \not\in N_2 \Z,
\end{cases}\\
\begin{split}
\omega^m_n =\;&
\textstyle\frac{4}{\epsilon \delta}
\sin \textstyle\frac{\pi}{2 N_1}
\sin \textstyle\frac{\pi}{2 N_2}
\left(\left|\cos \textstyle\frac{\left(2n+1\right) \pi}{4 N_1}
\sin \textstyle\frac{\left(2m+1\right) \pi}{4 N_2}\right|
\sin \textstyle\frac{\left(2n+1\right) \pi}{4 N_1}
\cos \textstyle\frac{\left(2m+1\right) \pi}{4 N_2}\right.\\
&\left. -
\cos \textstyle\frac{\left(2n+1\right) \pi}{4 N_1}
\sin \textstyle\frac{\left(2m+1\right) \pi}{4 N_2}
\left|\sin \textstyle\frac{\left(2n+1\right) \pi}{4 N_1}
\cos \textstyle\frac{\left(2m+1\right) \pi}{4 N_2}\right|\right)\\
=\;&
\begin{cases}
0 & \left(n, m\right) \in S\\
\textstyle\frac{2}{\epsilon \delta}
\sin \textstyle\frac{\pi}{2 N_1}
\sin \textstyle\frac{\pi}{2 N_2}
\sin \textstyle\frac{\left(2n+1\right) \pi}{2 N_1}
\sin \textstyle\frac{\left(2m+1\right) \pi}{2 N_2} &
\left(n, m\right) \not\in S.
\end{cases}
\end{split}
\end{gather*}
The singular set $S$ consists of integer points
in a checkerboard,
that is
\begin{equation*}
S = \left\{\left(n, m\right) \in \Z^2 \;\big|\;
\left\lfloor n/N_1\right\rfloor \equiv
\left\lfloor m/N_2\right\rfloor\;
\left(\mathrm{mod}\ 2\right)\right\}.
\end{equation*}
\begin{figure}[H]
\hfill
\hfill
\subfigure{\includegraphics[width=3cm,keepaspectratio]%
{./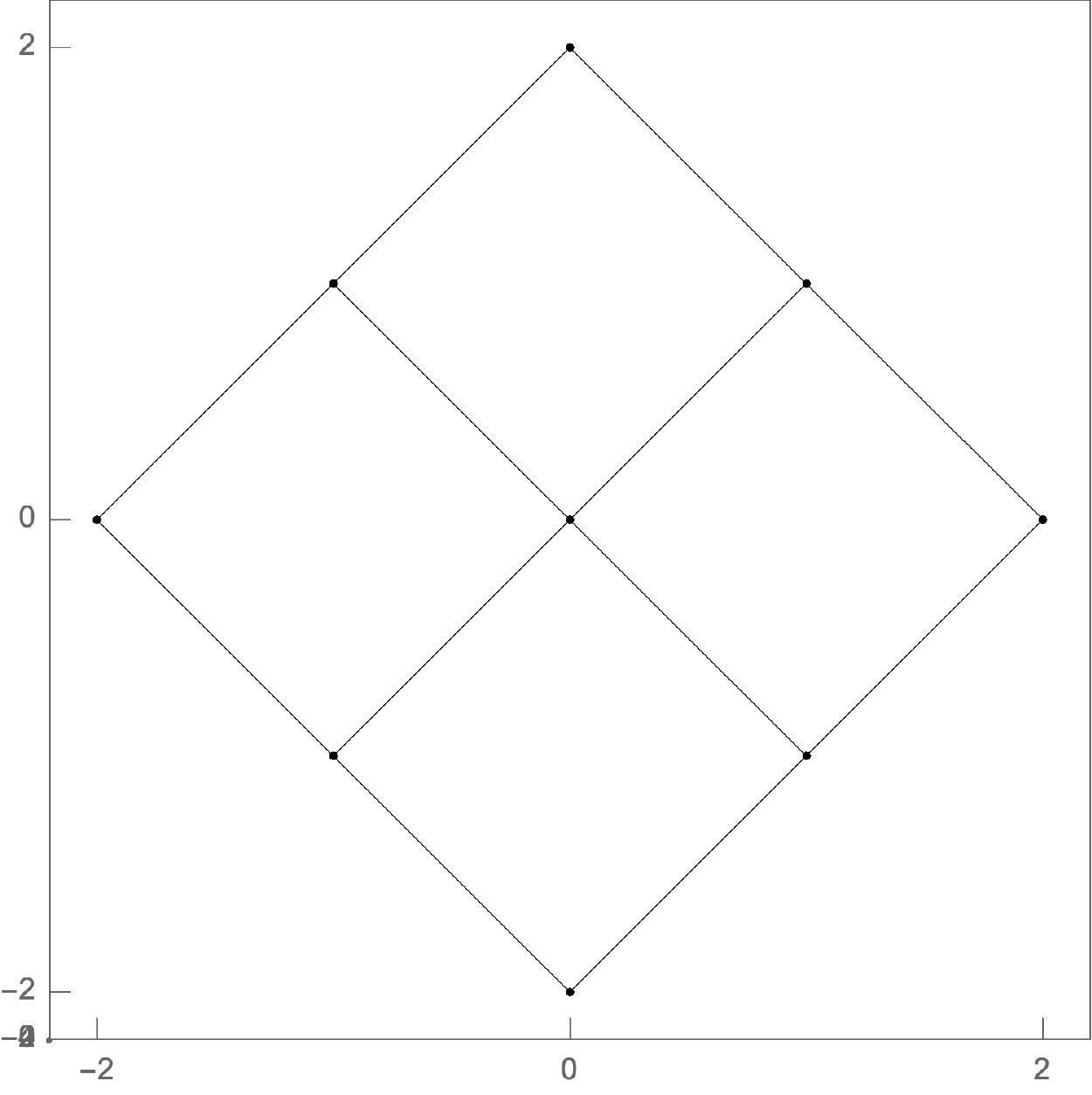}}
\hfill
\subfigure{\includegraphics[width=3cm,keepaspectratio]%
{./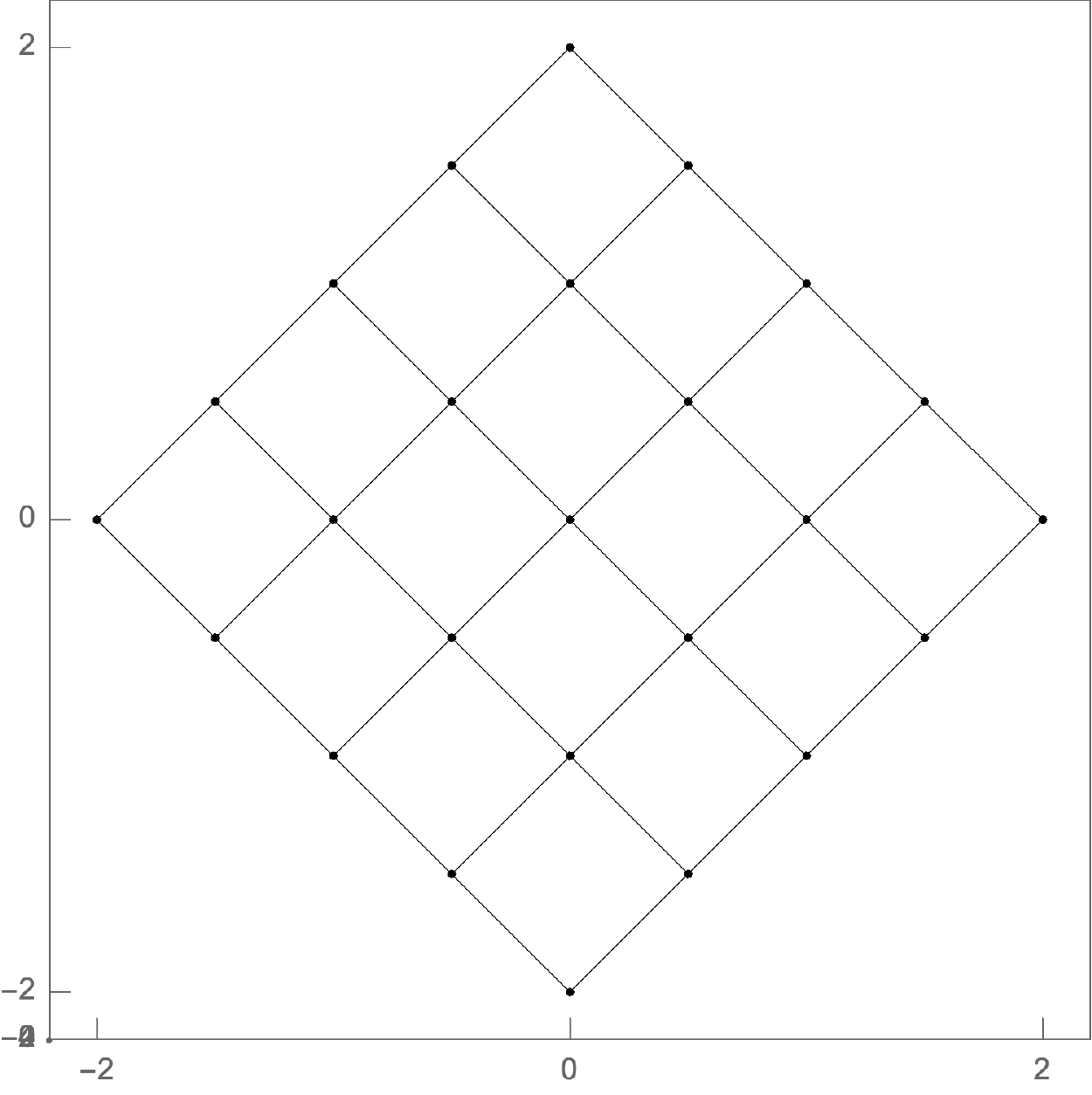}}
\hfill
\subfigure{\includegraphics[width=3cm,keepaspectratio]%
{./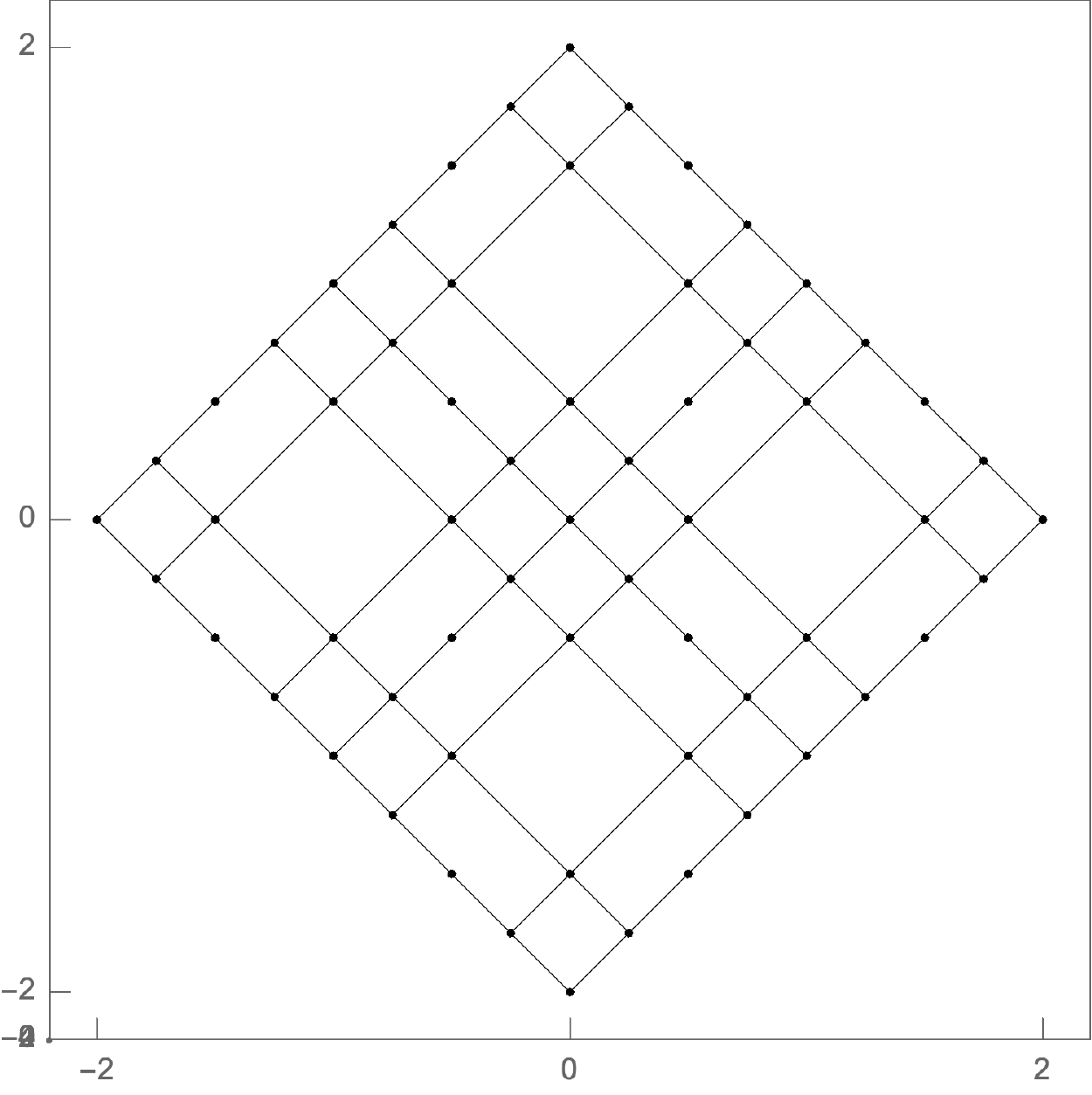}}
\hfill
\hfill

\hfill
\hfill
\subfigure[$N=1$]{\includegraphics[height=6cm,keepaspectratio]%
{./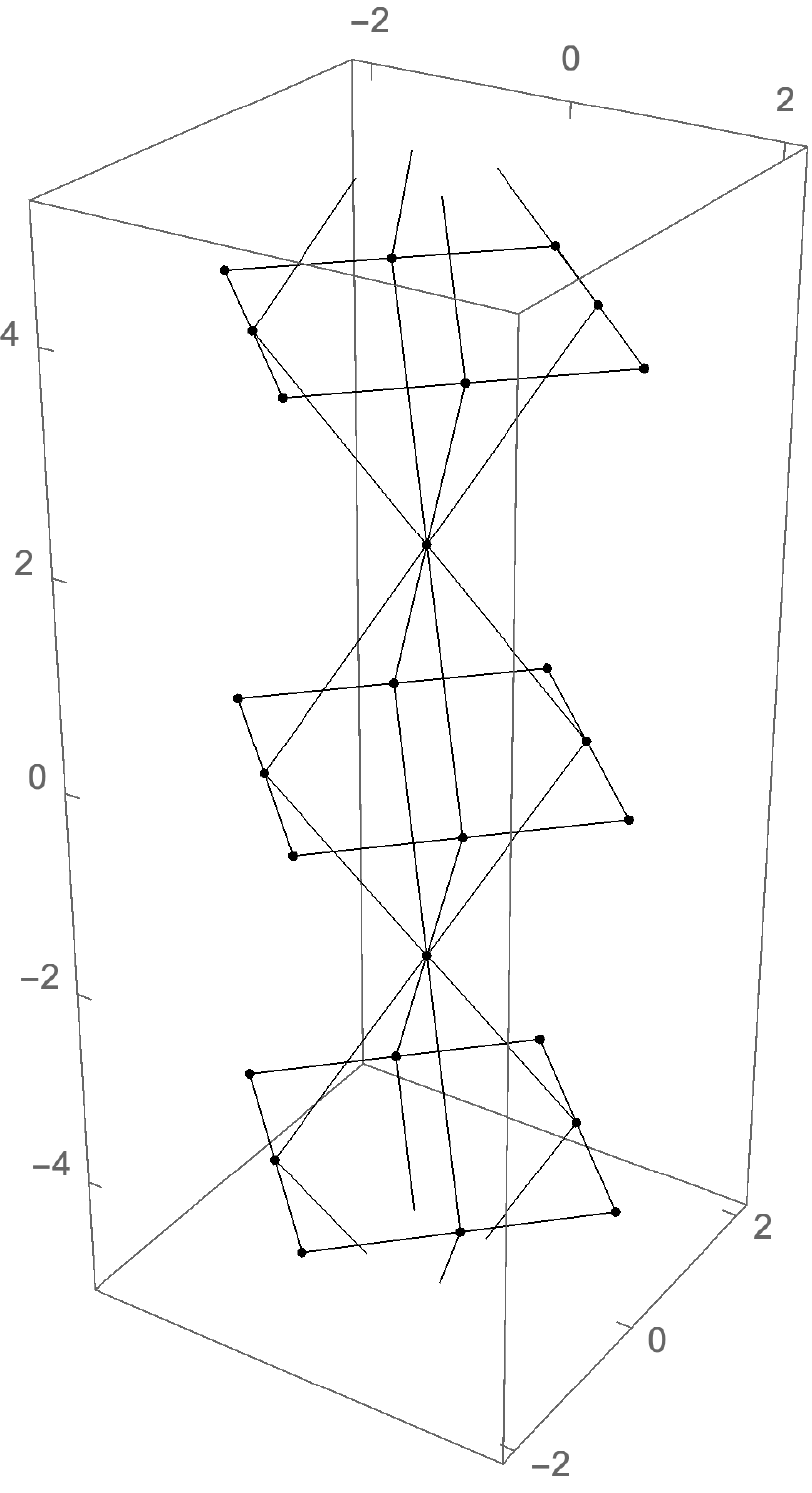}}
\hfill
\subfigure[$N=2$]{\includegraphics[height=6cm,keepaspectratio]%
{./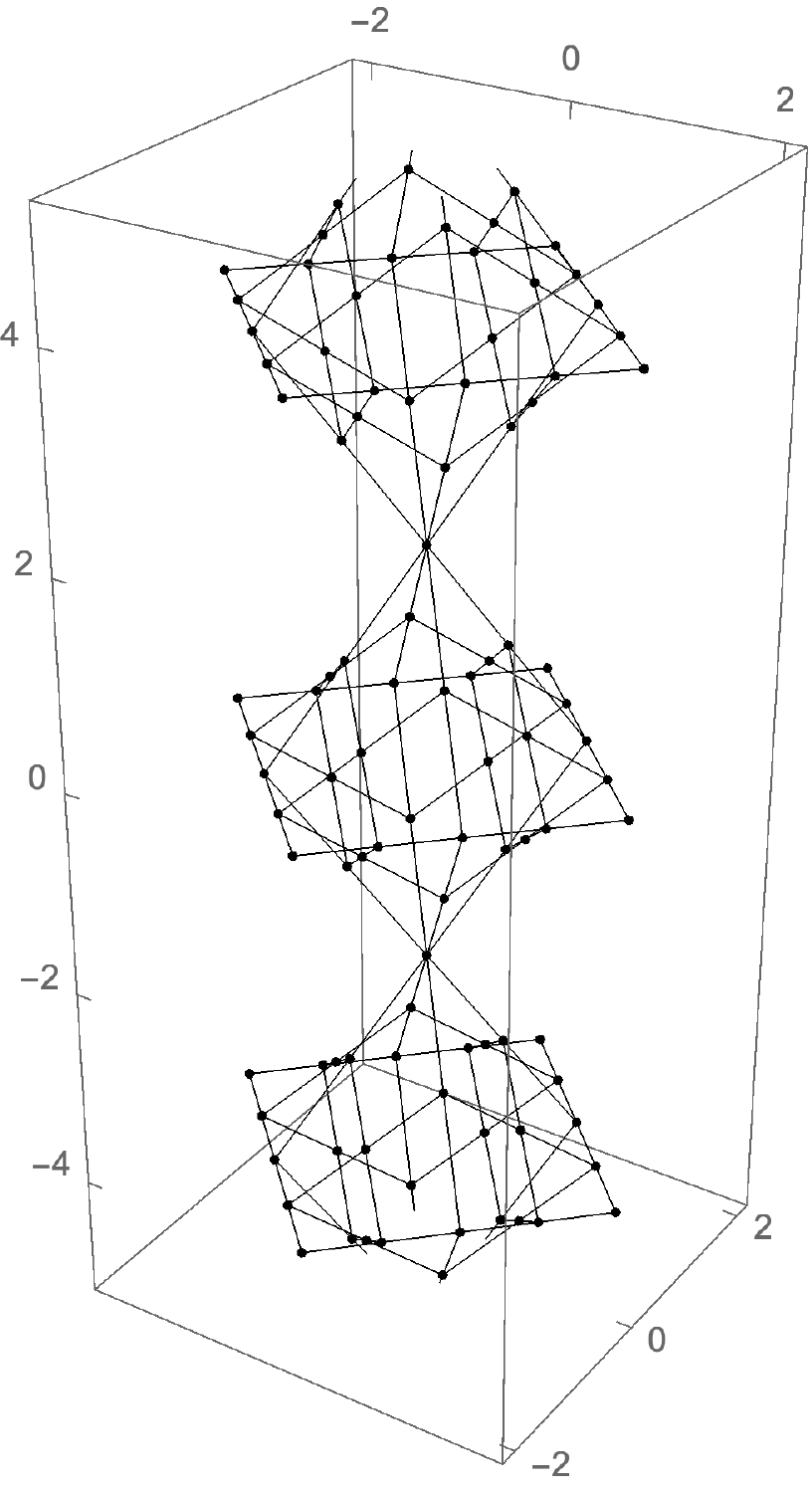}}
\hfill
\subfigure[$N=3$]{\includegraphics[height=6cm,keepaspectratio]%
{./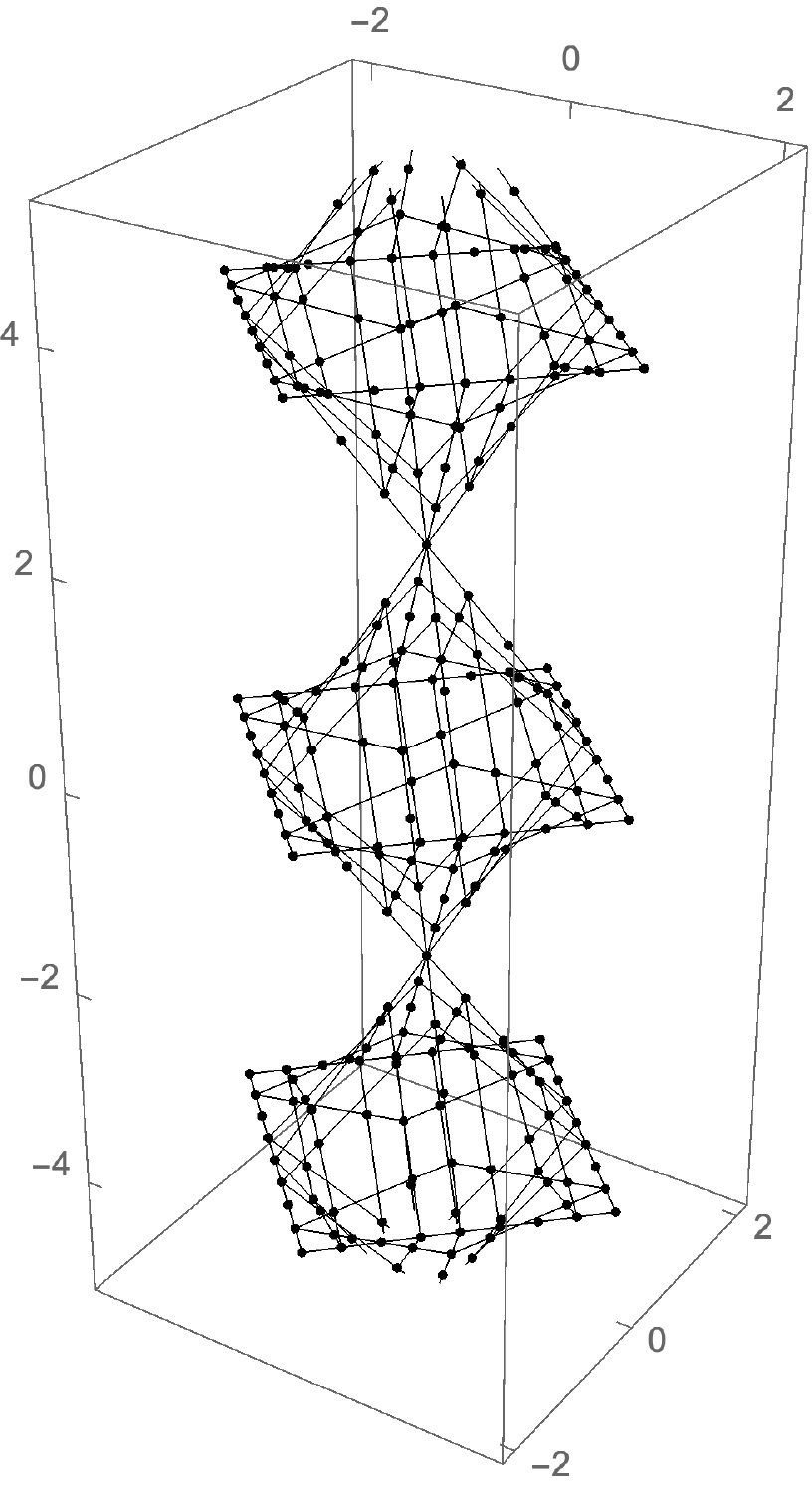}}
\hfill
\hfill
\caption{Discrete indefinite improper affine spheres $f^m_n$
with $\epsilon = \delta = 1$ and $N = N_1 = N_2$.
The figures in upper line show views from the top.}
\label{fig:d-exam2-2}
\end{figure}

\item
Third example is given by
\begin{equation*}
\gamma^1_n =
\cos \left(\theta_1 \epsilon n\right)
\begin{bmatrix}
1/2 + \cos^2 \left(\theta_1 \epsilon n\right)\\
2 \sin \left(\theta_1 \epsilon n\right)
\end{bmatrix},\quad
\gamma^2_m =
\cos \left(\theta_2 \delta m\right)
\begin{bmatrix}
1/2 + \cos^2 \left(\theta_2 \delta m\right)\\
2 \sin \left(\theta_2 \delta m\right)
\end{bmatrix}.
\end{equation*}
We have
\begin{equation*}
\begin{split}
\det \left[\gamma^1_{n-1}, \gamma^1_n\right]
=\;&
2 \left(3 + 2
\sin \left(\theta_1 \epsilon n\right)
\sin \left(\theta_1 \epsilon \left(n-1\right)\right)\right)
\sin \left(\theta_1 \epsilon/2\right)\\
&\cdot \cos \left(\theta_1 \epsilon n\right)
\cos \left(\theta_1 \epsilon \left(n-1/2\right)\right)
\cos \left(\theta_1 \epsilon \left(n-1\right)\right),
\end{split}
\end{equation*}
and hence
\begin{equation*}
\begin{split}
{\sum}_k^n \det \left[\gamma^1_{k-1}, \gamma^1_k\right]
=\;&
c_1 \left(\theta_1 \epsilon\right)
\sin \left(\theta_1 \epsilon n\right)
+ c_2 \left(\theta_1 \epsilon\right)
\sin \left(3 \theta_1 \epsilon n\right)\\
&+ c_3 \left(\theta_1 \epsilon\right)
\sin \left(5 \theta_1 \epsilon n\right).
\end{split}
\end{equation*}
Here the coefficients are given as
\begin{align*}
c_1 \left(t\right) &=
\frac{3 + 6 \cos t + \cos 2t}{4},\\
c_2 \left(t\right) &=
\frac{5}{8 \left(1 + 2 \cos t\right)},\\
c_3 \left(t\right) &=
- \frac{1}{8 \left(1 + 2 \cos t + 2 \cos 2t\right)}.
\end{align*}
This should be compared with \eqref{ex:integrate-det}.
We have
\begin{equation*}
f^m_n =
\begin{bmatrix}
\left(1 + \left(1/2\right) \cos \left(2 \theta_1 \epsilon n\right)\right)
\cos \left(\theta_1 \epsilon n\right)
+ \left(1 + \left(1/2\right) \cos \left(2 \theta_2 \delta m\right)\right)
\cos \left(\theta_2 \delta m\right)\\
\sin \left(2 \theta_1 \epsilon n\right)
+ \sin \left(2 \theta_2 \delta m\right)\\
z^m_n
\end{bmatrix},
\end{equation*}
where
\begin{equation*}
\begin{split}
z^m_n =\;&
- \cos \left(\theta_1 \epsilon n\right)
\cos \left(\theta_2 \delta m\right)
\left(\sin \left(\theta_1 \epsilon n\right)
- \sin \left(\theta_2 \delta m\right)\right)\\
&\cdot
\left(3 + 2 \sin \left(\theta_1 \epsilon n\right)
\sin \left(\theta_2 \delta m\right)\right)\\
&+ c_1 \left(\theta_1 \epsilon\right)
\sin \left(\theta_1 \epsilon n\right)
- c_1 \left(\theta_2 \delta\right)
\sin \left(\theta_2 \delta m\right)\\
&+ c_2 \left(\theta_1 \epsilon\right)
\sin \left(3 \theta_1 \epsilon n\right)
- c_2 \left(\theta_2 \delta\right)
\sin \left(3 \theta_2 \delta m\right)\\
&+ c_3 \left(\theta_1 \epsilon\right)
\sin \left(5 \theta_1 \epsilon n\right)
- c_3 \left(\theta_2 \delta\right)
\sin \left(5 \theta_2 \delta m\right).
\end{split}
\end{equation*}
Its data is given as
\begin{align*}
A_n =\;&
4 \epsilon^{-3}
\sin^3 \left(\theta_1 \epsilon/2\right)
\cos \left(\theta_1 \epsilon/2\right)
\big(a_1 \left(\theta_1 \epsilon/2\right)\\
&+ a_2 \left(\theta_1 \epsilon/2\right)
\cos \left(2 \theta_1 \epsilon n\right)\\
&+
a_3 \left(\theta_1 \epsilon/2\right)
\cos \left(4 \theta_1 \epsilon n\right)\big)
\cos \left(\theta_1 \epsilon n\right),\\
B_m =\;&
- 4 \delta^{-3}
\sin^3 \left(\theta_2 \delta/2\right)
\cos \left(\theta_2 \delta/2\right)
\big(a_1 \left(\theta_2 \delta/2\right)\\
&+ a_2 \left(\theta_2 \delta/2\right)
\cos \left(2 \theta_2 \delta m\right)\\
&+
a_3 \left(\theta_2 \delta/2\right)
\cos \left(4 \theta_2 \delta m\right)\big)
\cos \left(\theta_2 \delta m\right),\\
\omega^m_n =\;&
- 4 \epsilon^{-1} \delta^{-1}
\sin \left(\theta_1 \epsilon/2\right)
\sin \left(\theta_2 \delta/2\right)
\big(4 D^1_{n,m}\\
&+ 8 D^2_{n,m}
\sin \left(\theta_1 \epsilon \left(n + 1/2\right)\right)
\sin \left(\theta_2 \delta \left(m + 1/2\right)\right)\\
&+
3 D^3_{n,m}
\cos \left(\theta_1 \epsilon \left(2n+1\right)\right)
\cos \left(\theta_2 \delta \left(2m+1\right)\right)\big),
\end{align*}
where
\begin{align*}
a_1 \left(t\right) &=
7 + 9 \cos 2t + 2 \cos 4t + \cos 6t,\\
a_2 \left(t\right) &= - 6 - 2 \cos 2t,\\
a_3 \left(t\right) &= 1 + 2 \cos 2t
\end{align*}
and
\begin{align*}
D^1_{n,m} &=
\textstyle
\frac{3 + \cos \left(\theta_1 \epsilon\right)}{4}
\cos \frac{\theta_2 \delta}{2}
\sin \frac{\theta_1 \epsilon \left(2n+1\right)}{2}
- \frac{3 + \cos \left(\theta_2 \delta\right)}{4}
\cos \frac{\theta_1 \epsilon}{2}
\sin \frac{\theta_2 \delta \left(2m+1\right)}{2},\\
D^2_{n,m} &=
\textstyle
\frac{3 + \cos \left(\theta_2 \delta\right)}{4}
\cos \frac{\theta_1 \epsilon}{2}
\sin \frac{\theta_1 \epsilon \left(2n+1\right)}{2}
- \frac{3 + \cos \left(\theta_1 \epsilon\right)}{4}
\cos \frac{\theta_2 \delta}{2}
\sin \frac{\theta_2 \delta \left(2m+1\right)}{2},\\
D^3_{n,m} &=
\textstyle
\frac{1 + 2 \cos \left(\theta_1 \epsilon\right)}{3}
\cos \frac{\theta_2 \delta}{2}
\sin \frac{\theta_1 \epsilon \left(2n+1\right)}{2}
- \frac{1 + 2 \cos \left(\theta_2 \delta\right)}{3}
\cos \frac{\theta_1 \epsilon}{2}
\sin \frac{\theta_2 \delta \left(2m+1\right)}{2}.
\end{align*}
Especially if we choose the parameters $q_1$, $q_2$ so as to be
\begin{equation*}
\theta_1 \epsilon = \theta_2 \delta = \frac{\pi}{N}
\end{equation*}
with a positive integer $N$,
then $\omega$ is factorized as
\begin{align*}
\omega^m_n =\;&
- \textstyle\frac{4}{\epsilon \delta}
\sin^2 \textstyle\frac{\pi}{2N}
\cos \textstyle\frac{\pi}{2N}
\left(\sin \textstyle\frac{\pi \left(2n+1\right)}{2N}
- \sin \textstyle\frac{\pi \left(2m+1\right)}{2N}\right) W^m_n,\\
W^m_n =\;&
\left(3 + \cos \textstyle\frac{\pi}{N}\right)
\left(1 + 2
\sin \textstyle\frac{\pi \left(2n+1\right)}{2N}
\sin \textstyle\frac{\pi \left(2m+1\right)}{2N}\right)\\
&+
\left(1 + 2 \cos \textstyle\frac{\pi}{N}\right)
\cos \textstyle\frac{\pi \left(2n+1\right)}{N}
\cos \textstyle\frac{\pi \left(2m+1\right)}{N}.
\end{align*}
The singular set $S = S_1 \cup S_2$ is given by
\begin{gather*}
S_1 = \left\{\left(n, m\right) \in \Z^2 \;\big|\;
m \equiv n\ \left(\mathrm{mod}\ 2N\right),\
m \equiv - n + N-1\ \left(\mathrm{mod}\ 2N\right)\right\},\\
S_2 = \left\{\left(n, m\right) \in \Z^2 \;\big|\;
W^m_n = 0\right\}.
\end{gather*}
\begin{figure}[H]
\hfill
\hfill
\subfigure[$N=4$]{\includegraphics[height=6cm,keepaspectratio]%
{./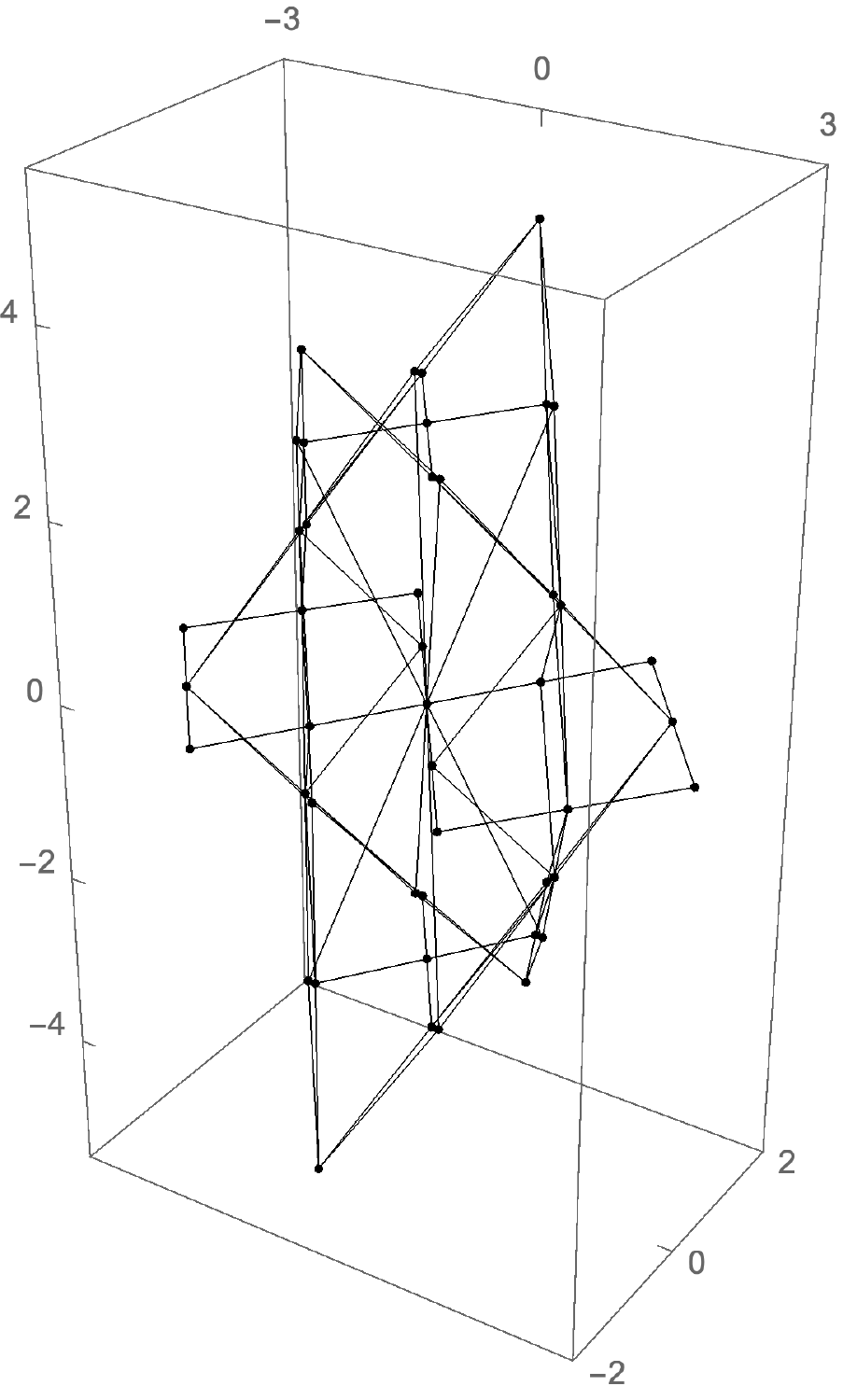}}
\hfill
\subfigure[$N=6$]{\includegraphics[height=6cm,keepaspectratio]%
{./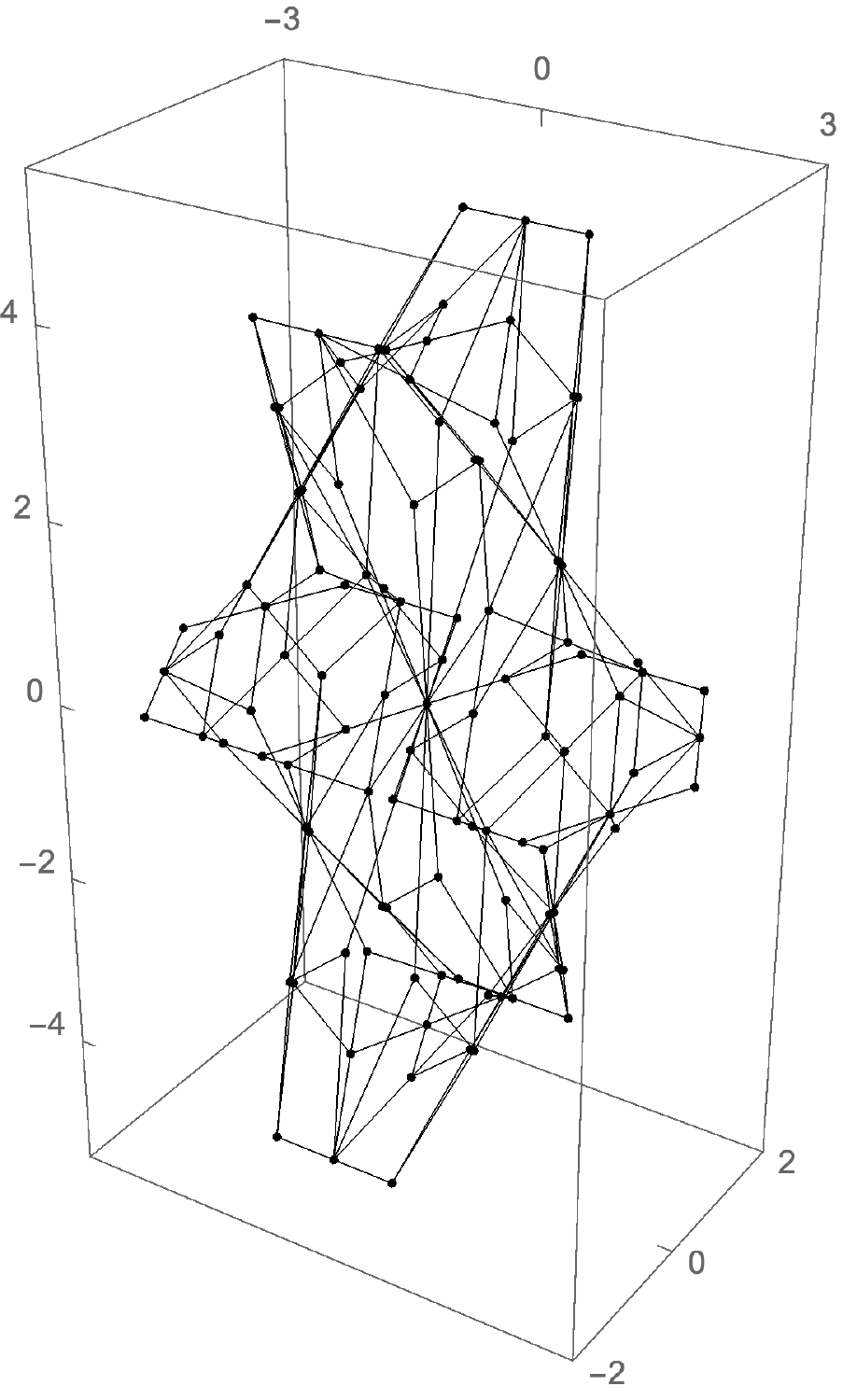}}
\hfill
\subfigure[$N=12$]{\includegraphics[height=6cm,keepaspectratio]%
{./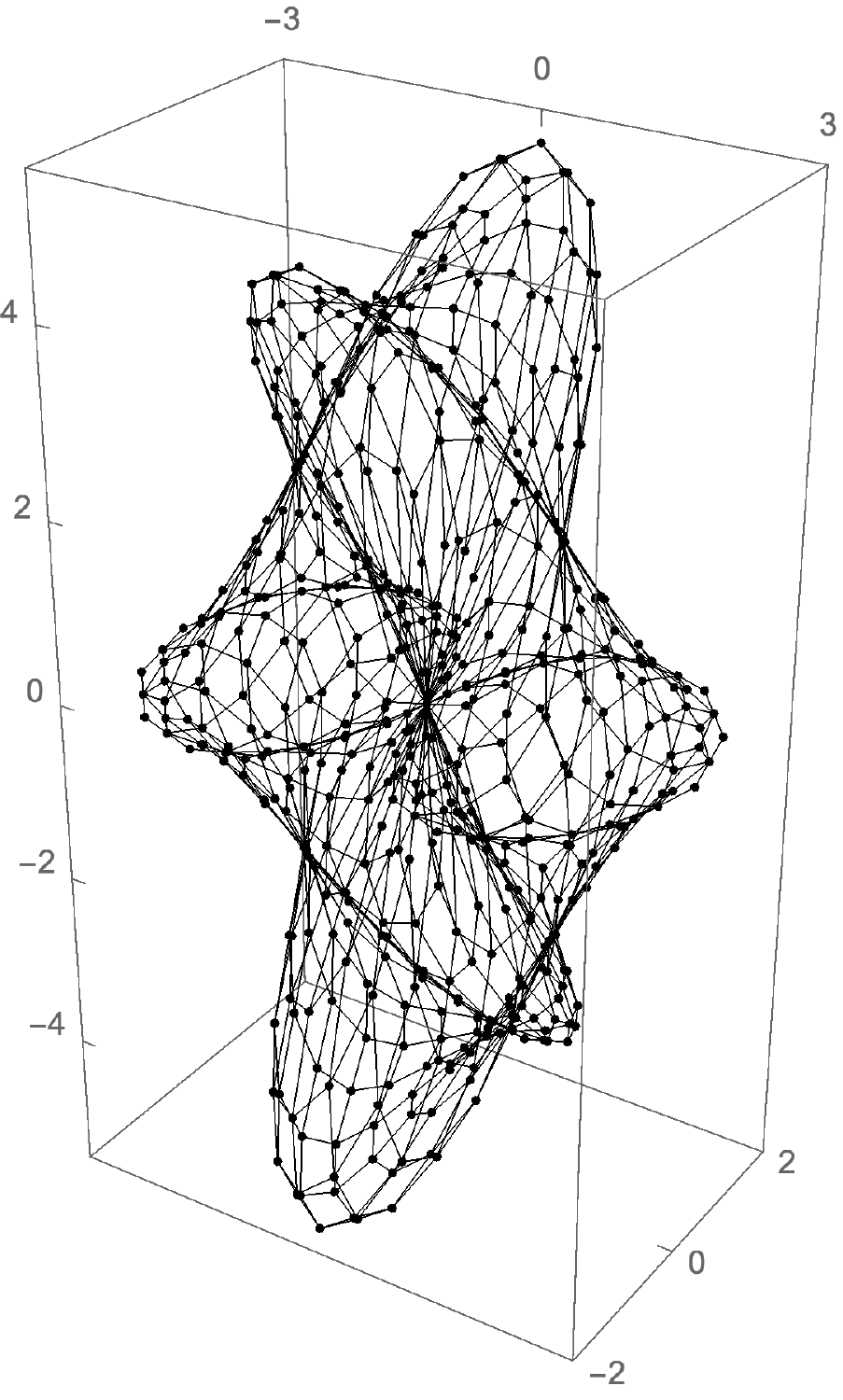}}
\hfill
\hfill
\caption{Discrete indefinite improper affine spheres $f^m_n$
with $\epsilon = \delta = 1$.}
\label{fig:d-exam2-3}
\end{figure}

\end{enumerate}
\end{Example}

\def\cprime{$'$}
\providecommand{\bysame}{\leavevmode\hbox to3em{\hrulefill}\thinspace}
\providecommand{\MR}{\relax\ifhmode\unskip\space\fi MR }
\providecommand{\MRhref}[2]{%
  \href{http://www.ams.org/mathscinet-getitem?mr=#1}{#2}
}
\providecommand{\href}[2]{#2}

\end{document}